\numberwithin{equation}{section}
\newcounter{AbcT}
\newtheorem {Theorem}    {Theorem}[section]
\newtheorem* {Lemma9.3}    {Proposition 10.2}
\newtheorem* {Theoremmainprop}    {Theorem 1.3}
\newtheorem* {Question1.7}    {Question 1.7}
\newtheorem* {Definition} {Definition}
\newtheorem* {Remark}	{\bf{Remark}}
\newtheorem {Problem}    {Problem}
\newtheorem {Question}    {Question}
\newtheorem {Lemma}      [Theorem]    {Lemma}
\newtheorem {Corollary}   [Theorem] {Corollary}
\newtheorem {Proposition}[Theorem]    {Proposition}
\newtheorem {Claim}      [Theorem]    {Claim}
\newtheorem {Observation}[Theorem]    {Observation}
\theoremstyle{remark}
\newcounter{DM@bibnum}
\newcommand{\la}{\langle}
\newcommand{\ra}{\rangle}
\def\Nil{\mathbf{Nil}}
\def\Sol{\mathbf{Sol}}
\def\AGL{{\rm AGL}}
\def\SL{{\rm SL}}
\def\GL{{\rm GL}}
\def\PGL{{\rm PGL}}
\def\NCC{{\rm NCC}}
\def\CC{{\rm CC}}
\def\MC{{\rm MC}}
\def\PEO{{\rm PEO}}
\def\MEO{{\rm MEO}}
\def\log{{\rm log\,}}
\def\deg{{\rm deg\,}}
\def\Aut{{\rm Aut}}
\def\Inn{{\rm Inn}}
\def\Out{{\rm Out}}
\def\Ker{{\rm Ker\,}}
\def\Im{{\rm Im\,}}
\def\ord{{\rm ord}}
\def\rk{{\rm rk\,}}
\def\NSL_2{{\mathcal N SL_2}}
\def\sl{{\mathfrak{sl}}}
\def\gl{{\mathfrak{gl}}}
\def\Comm{{\rm Comm}}
\def\eps{\varepsilon}
\def\lam{\lambda}            
\def\phi{\varphi}
\def\calC{{\mathcal C}}
\def\calE{{\mathcal E}}
\def\calF{{\mathcal F}}
\def\hbar{\bar h}
\def\dbF{{\mathbb F}}
\def\dbN{{\mathbb N}}
\def\dbQ{{\mathbb Q}}
\def\dbZ{{\mathbb Z}}
\def\skv{{\vskip .1cm}}
\begin{document}
\title[Groups covered by conjugates of finitely many (pro)cyclic subgroups]
{On groups that can be covered by conjugates of finitely many cyclic or procyclic subgroups}

\author{Yiftach Barnea}
\address{Yiftach Barnea, Department of Mathematics, Royal Holloway, University of London, Egham, Surrey TW20 0EX, UK}
\email{y.barnea@rhul.ac.uk}
\author{Rachel Camina}
\address{Rachel Camina, Fitzwilliam College, Cambridge, CB3 0DG, UK}
\email{rdc26@cam.ac.uk}
\author{Mikhail Ershov}
\address{Mikhail Ershov, Department of Mathematics, University of Virginia, 141 Cabell Drive, Charlottesville, VA 22903, USA}
\email{ershov@virginia.edu}
\author{Mark L. Lewis}
\address{Mark L. Lewis, Department of Mathematical Sciences, Kent State University, Kent, OH 44242 USA}
\email{lewis@math.kent.edu}

\subjclass{Primary: 20D15; 20E18. Secondary: 20E26; 20E34; 20E45; 20G25.}

\begin{abstract}
Given a discrete (resp. profinite) group $G$, we define $\NCC(G)$ to be the smallest number of cyclic (resp. procyclic) subgroups of $G$ whose conjugates cover $G$. In this paper we determine all residually finite discrete groups with finite NCC and give an almost complete characterization of profinite groups with finite NCC. 
\end{abstract}

\maketitle
\section{Introduction}
\label{sec:main}

\subsection{Motivation}

Questions of covering groups by conjugacy classes of subgroups, frequently called {\it normal coverings},  have a very long history. For instance, it is a classical theorem from the 19th century that a finite group cannot be written as a union of conjugates of a (single) proper subgroup.\footnote{This theorem is often attributed to Burnside and appears in his 1897 book \cite{Bu}.
However, an equivalent result stated in terms of permutation groups was already established by Jordan~\cite{Jo} in 1872.}
In modern terminology, this theorem asserts that finite groups are invariably generated, a property which attracted plenty of attention over the past decade (see, e.g., \cite{Mi} and references therein). 
A lot of recent work was also devoted to studying the {\it normal covering number} $\gamma(G)$ for a finite non-cyclic group $G$ -- the smallest number of proper subgroups whose conjugates cover $G$ (see, e.g., \cite{BPS} and references therein
as well as \cite{BSW} for the investigation of a related quantity $\gamma_w(G)$).

\skv

In this paper we will study {\it normal cyclic coverings}, that is, coverings of groups by conjugacy classes of {\it cyclic} subgroups. The main invariant we will be interested in
is defined as follows.

\begin{Definition}\rm Let $G$ be a group. We define $\NCC(G)$ to be the smallest $k$ such that $G$ can be written as a union of conjugacy classes of $k$ cyclic subgroups. If no such $k$ exists, we set $\NCC(G)=\infty$. 
\end{Definition}

Our motivation for studying NCC was two-fold. On one hand, understanding which infinite groups have finite NCC and the closely related property (BVC) is related to certain problems about classifying spaces for families of subgroups, most notably a conjecture of Juan-Pineda and Leary \cite[Conjecture~1]{JPL} and a question of L\"uck, Reich, Rognes and Varisco~\cite[Question~4.9]{LRRV} (see \S~\ref{sec:BVC} for details). On the other hand, it is natural to compare $\NCC(G)$ with the classical and much better understood invariant $k(G)$, the number of conjugacy classes of $G$. One of the basic properties of $k(G)$ is that for finite $G$, it grows with the size of the group: $k(G)\to \infty$ if $|G|\to \infty$. Thus one may ask the following question:

\begin{Question} 
\label{q:NCCgrows}
Let $\calC$ be a class of finite groups. Is it true that $\NCC(G)\to\infty$ as $|G|\to\infty$ for $G\in\calC$?
\end{Question}

The answer to Question~\ref{q:NCCgrows} is clearly negative if $\calC$ contains all finite groups since $\NCC(G)=1$ for any cyclic group. Excluding cyclic groups is not sufficient for a positive answer as it is easy to see that all non-abelian groups of order $pq$, with $p$ and $q$ distinct primes, have NCC equal to $2$. Von Puttkamer asked in his Ph.D. thesis whether the answer is positive if $\calC$ is the class of all non-cyclic finite $p$-groups for a fixed $p>2$ \cite[Question~5.0.9]{vP}, and this question served as the original motivation for this project. 

It is natural to approach von Puttkamer's question via pro-$p$ groups. If $G$ is a profinite group, $\NCC(G)$ is defined in the same way as for discrete\footnote{In this paper by a discrete group we will simply mean a group not endowed with any topology.} groups except that one replaces cyclic subgroups by procyclic subgroups (that is, closed subgroups topologically generated by a single element). 
A standard argument (see Claim~\ref{claim:finitefamily}) 
shows that if for some $k\in\dbN$ there exist infinitely many (non-isomorphic) non-cyclic finite $p$-groups $G$ with $\NCC(G)\leq k$, then there exists an infinite non-procyclic
pro-$p$ group $G$ with $\NCC(G)\leq k$. Conversely, it is clear that if $G$ is any infinite pro-$p$ group which is not procyclic and $\NCC(G)=k < \infty$, then sufficiently large finite quotients of $G$ form an infinite family of non-cyclic finite $p$-groups with NCC equal to $k$.

This led us to investigate which infinite pro-$p$ groups have finite NCC. As we will explain below, infinite non-procyclic pro-$p$ groups with finite NCC do exist, and thus von Puttkamer's question has negative answer. However, it turns out that infinite pro-$p$ groups and more generally infinite profinite groups with finite NCC have very restricted structure (see Theorems~\ref{main:prop}~and~\ref{thm:mainprofinite} and Proposition~\ref{main:pronilpotent}). Using these results, we will solve the aforementioned conjecture from \cite{JPL} and give a positive answer to \cite[Question~4.9]{LRRV} for discrete {\it residually finite} groups
(see Corollary~\ref{cor:top}). Going back to von Puttkamer's question, the proof of Claim~\ref{claim:finitefamily} shows that Theorem~\ref{main:prop} also yields strong constraints on families of finite $p$-groups with bounded NCC and can possibly provide the first step towards a satisfactory description of all such families. We are planning to address the latter problem in a follow-up paper.

\begin{Remark}\rm
We would like to mention a simple characterization of NCC valid for all profinite groups (so in particular for finite groups).
If $G$ is profinite, then $\NCC(G)$ is the number of conjugacy classes of maximal procyclic subgroups of $G$. This is because in a profinite group every procyclic subgroup is contained in a maximal procyclic subgroup. The corresponding assertion in the discrete case (with procyclic subgroups replaced by cyclic subgroups) does not always hold, even for residually finite groups. For example, $G=\oplus_{p}\dbZ/p\dbZ$, where the sum is over all primes, is a residually finite group which has infinite NCC but has no maximal cyclic subgroups. 
\end{Remark}

\subsection{Discrete groups with finite NCC}
Our first main theorem asserts that in the discrete residually finite case there are no non-trivial examples with finite NCC,
confirming a conjecture of von Puttkamer~\cite[Conjecture~5.0.1]{vP}:

\begin{Theorem}
\label{main:discrete}
Let $G$ be an infinite discrete residually finite group with finite NCC. Then $G$ is infinite cyclic or infinite dihedral (both of these do have finite NCC, 1 and 3 respectively).
\end{Theorem}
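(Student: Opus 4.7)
The plan is to reduce the theorem to the profinite setting and invoke Theorem~\ref{thm:mainprofinite}. Let $G$ be infinite residually finite discrete with $\NCC(G)=k<\infty$, and fix cyclic $C_1,\dots,C_k\le G$ whose $G$-conjugates cover $G$. Let $\widehat G$ be the profinite completion; residual finiteness gives an embedding $G\hookrightarrow\widehat G$ with dense image.

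The first step is a compactness argument showing $\NCC(\widehat G)\le k$. Let $D_i$ denote the closure in $\widehat G$ of the image of $C_i$; each $D_i$ is procyclic. Because the map $\widehat G\times D_i\to\widehat G$, $(g,c)\mapsto gcg\iv$, is continuous from a compact source, the set $\bigcup_{g\in\widehat G}gD_ig\iv$ is closed in $\widehat G$. Its union over $i$ contains $G$, hence contains $\overline G=\widehat G$, so conjugates of $D_1,\dots,D_k$ cover $\widehat G$.

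The second (and most delicate) step is to apply Theorem~\ref{thm:mainprofinite} (together with Corollary~\ref{main:pronilpotent}) to the infinite profinite group $\widehat G$ of finite $\NCC$, and to use the extra constraints coming from $G$ being a residually finite discrete group to cut the list of possibilities down to $\widehat G\cong\widehat{\dbZ}$ and $\widehat G\cong\widehat{\dbZ}\rtimes\dbZ/2=\widehat{D_\infty}$. The introduction explicitly notes that non-procyclic infinite pro-$p$ groups with finite $\NCC$ exist, so the profinite classification by itself does not force these two possibilities; the tool for eliminating the exotic cases is that the images of generators of the $C_i$ generate $G^{\rm ab}$, so $G^{\rm ab}$ is a finitely generated residually finite abelian group and $\widehat G{}^{\rm ab}$ is the profinite completion of $\dbZ^r\times(\text{finite cyclic})$; combined with the profinite structure theorem this pins down $\widehat G$.

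The final step is to lift back to $G$. If $\widehat G\cong\widehat{\dbZ}$, every finite quotient of $G$ is cyclic, hence abelian, so $G$ is abelian by residual finiteness; since $\widehat{\dbZ}$ is torsion-free and $G$ embeds into $\widehat G$, $G$ is torsion-free abelian. Being abelian, $G$ equals $C_1\cup\cdots\cup C_k$, and B.\,H.\,Neumann's covering lemma gives a $C_i$ of finite index in $G$, making $G$ a finitely generated torsion-free abelian group of rank one, so $G\cong\dbZ$. If $\widehat G\cong\widehat{\dbZ}\rtimes\dbZ/2$, the preimage of $\widehat{\dbZ}$ in $G$ is an index-$2$ normal subgroup $A$ with $\widehat A\cong\widehat{\dbZ}$, and the first case applied to $A$ gives $A\cong\dbZ$; since the action of $\widehat G/\widehat A$ on $\widehat A$ is by inversion, the same holds for $G/A$ on $A$, and as $H^2(\dbZ/2,\dbZ_{\text{inv}})=0$ the extension splits, giving $G\cong D_\infty$. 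The main obstacle is Step~2: extracting the precise profinite completions $\widehat{\dbZ}$ and $\widehat{D_\infty}$ from the ``almost complete'' classification of Theorem~\ref{thm:mainprofinite} by exploiting discreteness, residual finiteness, and finite generation of $G^{\rm ab}$; Steps~1 and~3 are comparatively routine.
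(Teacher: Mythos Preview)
Your Steps~1 and~3 are fine, but Step~2 contains a genuine gap which is the entire content of the theorem. You assert that knowing $\widehat G{}^{\rm ab}$ is the profinite completion of a finitely generated abelian group, together with Theorem~\ref{thm:mainprofinite} and Corollary~\ref{main:pronilpotent}, forces $\widehat G\in\{\widehat{\dbZ},\,\widehat{D_\infty}\}$. This does not follow. By Theorem~\ref{main:prop}, the open pronilpotent subgroup $U$ of $\widehat G$ may have direct factors isomorphic to open torsion-free subgroups of $\PGL_1(D)$ for $D$ the quaternion algebra over $\dbQ_p$. These groups have \emph{finite} abelianization (for instance $\PGL_1^1(O_D)^{\rm ab}\cong(\dbZ/p\dbZ)^2$), so the shape of $\widehat G{}^{\rm ab}$ places no obstruction whatsoever on their presence. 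Your only stated tools --- discreteness, residual finiteness, and finite generation of $G^{\rm ab}$ --- give no leverage against such factors, and you offer no further argument.

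What the paper actually does is quite different and passes through linearity. After reducing to a finite-index residually nilpotent subgroup $H$ of $G$ (via Theorem~\ref{thm:mainprofinite}), one looks at the image $H_p$ of $H$ in its pro-$p$ completion $\widehat H_p$. Since $\widehat H_p$ has finite NCC, Theorem~\ref{thm:padic} makes it $p$-adic analytic, hence linear over $\dbQ_p$, so $H_p$ is a discrete linear group with finite NCC. In the finitely generated case one invokes the von~Puttkamer--Wu theorem (Theorem~\ref{thm:NCClinear}); in the general case one observes that the eigenvalues of $H_p$ lie in a finitely generated field (because $H_p$ has finite NCC) and applies Bernik's theorem (Theorem~\ref{Bernik}) to conclude $H_p$ is virtually solvable, then Groves--Wilson to get $H_p$ virtually cyclic. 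This is precisely the mechanism that eliminates the $\PGL_1(D)$-type factors, and it has no analogue in your outline.
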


There are several classes of infinite discrete (not necessarily residually finite) groups which were previously known to satisfy the 
implication of Theorem~\ref{main:discrete}:
\begin{itemize}
\item[(a)] virtually solvable groups,
\item[(b)] one-relator groups,
\item[(c)] acylindrically hyperbolic groups,
\item[(d)] 3-manifold groups,
\item[(e)] CAT(0) cube groups,
\item[(f)] finitely generated linear groups,
\item[(g)] arbitrary linear groups in characteristic zero. 
\end{itemize}

In other words, every infinite group with finite NCC in one of these classes is either infinite cyclic or infinite dihedral.
For (a) this was proved by Groves and Wilson~\cite{GW}. Von Puttkamer and Wu proved the result for classes (b)-(e) in \cite{vPW} and for (f) in \cite[Theorem~2.11]{vPW2}.\footnote{Technically, the results for all classes (a)-(f) were not established until \cite{vPW2} since \cite{GW} and \cite{vPW} dealt not with groups with finite NCC, but with groups satisfying the related property (BVC) --
see \S~\ref{sec:BVC}. However, the proofs of the corresponding results for (BVC) are completely analogous.} 
Finally, (g) is a combination of (a) and a theorem of Bernik~\cite{Be} (see Theorem~\ref{Bernik} for the statement) which, in turn, is based on the existence
of generic elements in Zariski-dense subgroups of semisimple algebraic groups in characteristic zero, established by
Prasad and Rapinchuk in \cite{PR} (see also Proposition~3.5 and Remark~3.6 in \cite{CRRZ}).

Since finitely generated linear groups are residually finite, the result for (f) is a special case of Theorem~\ref{main:discrete}. However, 
we originally proved Theorem~\ref{main:discrete} only for finitely generated groups, and the proof relied on the corresponding result for (f). To prove Theorem~\ref{main:discrete} in the general case we will use a similar strategy, but
instead of \cite[Theorem~2.11]{vPW2} we will apply the above theorem from \cite{Be}.

\vskip .12cm
Note that if a group $G$ has finite NCC, then obviously so do all its quotients. Thus, we get an immediate consequence of Theorem~\ref{main:discrete} applicable to arbitrary discrete groups.

\begin{Corollary}
\label{cor:maindiscrete} Let $G$ be a discrete group with finite NCC. Then the largest residually finite quotient
of $G$ (which is the image of $G$ in its profinite completion) is finite, cyclic or infinite dihedral. 
\end{Corollary}

\begin{Remark}\rm There are plenty of known examples of infinite discrete groups which have finitely many conjugacy classes
and thus in particular have finite NCC. Such groups with only 2 conjugacy classes (albeit infinitely generated) were constructed already in the classical paper of Higman, B.H. Neumann and H. Neumann~\cite{HNN}. To the best of our knowledge, the first finitely generated examples are due to S. Ivanov~\cite[Theorem~41.2]{Ol} who in particular showed that there exist such groups of exponent $p$ for every sufficiently large prime $p$. Finally, the main theorem of a remarkable paper of Osin~\cite{Os} implies that for any $n\geq 2$ there exist infinite $2$-generated groups with exactly $n$ conjugacy classes. For additional examples of infinite groups with finite NCC see \cite{vPW2}.   
\end{Remark}

\subsection{Profinite groups with finite NCC}

We now turn to the classification of profinite groups with finite NCC.  We start by describing pro-$p$ groups with finite NCC.

\begin{Theorem}
\label{main:prop}
Let $p$ be a prime and $G$ a pro-$p$ group. Then $G$ has finite NCC if and only
if one of the following 3 mutually exclusive conditions holds:
\begin{itemize}
\item[(i)] $G$ is finite. 
\item[(ii)] $G$ is infinite procyclic or $p=2$ and $G$ is infinite prodihedral, that is, the pro-$2$ completion of the infinite dihedral group.
\item[(iii)] $G$ is isomorphic to an open torsion-free subgroup of $\PGL_1(D)$ where $D$ is the quaternion division algebra\footnote{Such a division algebra is unique (up to isomorphism). Indeed, for any field $F$ the number of isomorphism classes of central division of degree $d$ is equal to the number of elements of order $d$ in the Brauer group $Br(F)$. It is well known that the Brauer group
of any non-archimedean local field is isomorphic to $\dbQ/\dbZ$ and thus has a unique element of order $2$.}
over $\dbQ_p$. 
\end{itemize}
\end{Theorem}
\begin{Remark}\rm Let us briefly comment on the structure of the groups in item (iii). Let $D$ be the quaternion division algebra over $\dbQ_p$
and $O_D$ its ring of integers. The group $\PGL_1(D)=D^{\times}/\dbQ_p^{\times}$ is virtually pro-$p$ and virtually torsion-free. Moreover its first congruence subgroup $\PGL_1^1(O_D)$ is pro-$p$ and for $p>2$ contains every pro-$p$ subgroup of $\PGL_1(D)$.
It is easy to show that if $p>3$, already the group $\PGL_1^1(O_D)$ is torsion-free.
\end{Remark}

Let $\Nil$ denote the class of finite nilpotent groups. The classification of pro-$\Nil$ groups with finite NCC easily reduces to the pro-$p$ case. Indeed, if $G$ is pro-$\Nil$, it is a direct product of its Sylow pro-$p$ subgroups $G_p$. Moreover, by Lemma~\ref{lem:product} below we have $\NCC(G)=\prod \NCC(G_p)$. Thus a pro-$\Nil$ group $G$ has finite NCC
if and only if each $G_p$ has finite NCC and moreover $\NCC(G_p)=1$ for almost all $p$. Since pro-$p$ groups with NCC 1 are exactly procyclic pro-$p$ groups and a product of procyclic groups of coprime orders is procyclic, we conclude the following:

\begin{Proposition}
\label{main:pronilpotent}A pro-$\Nil$ group $G$ has finite NCC if and only if
$G=C\times \prod_{i=1}^k H_i$ where $C$ is a procyclic group and there exist distinct primes $p_1,\ldots, p_k$ not dividing $|C|$
such that each $H_i$ is a non-cyclic pro-$p_i$ group with finite NCC.
\end{Proposition}

Our last main theorem deals with arbitrary profinite groups with finite NCC.

\begin{Theorem}
\label{thm:mainprofinite}
Let $G$ be a profinite group with finite NCC. Then $G$ contains an open pro-$\Nil$ subgroup (which must also have finite NCC by Lemma~\ref{lem:finiteindex}).
\end{Theorem}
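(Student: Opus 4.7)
The plan is to reduce to a uniform bound on the index of the Fitting subgroup in finite quotients of $G$, and then attack that finite-group problem by separating the semisimple and solvable parts, using Theorem~\ref{main:prop} to control the pro-$p$ side. Since $\NCC(G/N) \leq \NCC(G)$ for every open normal $N$, and since $G$ has an open pronilpotent subgroup if and only if there is a uniform bound on $[G/N : F(G/N)]$ as $N$ ranges over such $N$ (where $F$ denotes the Fitting subgroup), it suffices to exhibit a function $f\colon\dbN\to\dbN$ with $[H:F(H)] \leq f(\NCC(H))$ for every finite group $H$. Given such a bound, the inverse limit of the Fitting subgroups of the finite quotients is an open normal pronilpotent subgroup of $G$.

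First I would handle the semisimple quotient $\bar H := H/\mathrm{sol}(H)$, which is a direct product of non-abelian finite simple groups and satisfies $\NCC(\bar H) \leq \NCC(H)$. A straightforward counting shows that $\NCC$ of a direct product of $n$ non-abelian simple groups grows at least linearly in $n$, so $\bar H$ has at most $\NCC(H)$ simple factors; each factor is a non-abelian finite simple group of $\NCC$ at most $\NCC(H)$. Only finitely many non-abelian finite simple groups have $\NCC$ below any fixed threshold --- either via the Classification of Finite Simple Groups, or via direct lower bounds on the number of conjugacy classes of cyclic subgroups in the alternating and Lie-type families --- so $|\bar H|$ is bounded in terms of $\NCC(H)$.

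Next I would bound $[R:F(R)]$ where $R := \mathrm{sol}(H)$. Working prime by prime, the idea is to constrain each Sylow $p$-subgroup of $R$ using Theorem~\ref{main:prop} (in its finite $p$-group incarnation), and then combine the local Sylow information by means of Hall's theorem and a fusion-theoretic analysis of how conjugacy classes of cyclic subgroups interact across different primes. Combining with the bound on $|\bar H|$ from the previous step will then yield the desired bound on $[H:F(H)]$, and hence the theorem.

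The main obstacle will be the solvable case. Sylow $p$-subgroups of $H$ do not a priori inherit a bounded $\NCC$: the intersections $C_i^g \cap P$ for $g$ ranging over $H$ can produce many $P$-conjugacy classes of cyclic subgroups inside a single Sylow $p$-subgroup $P$, so Theorem~\ref{main:prop} cannot be applied naively to each Sylow subgroup. The core technical effort will be either to establish such inheritance under the solvability hypothesis, or to bypass it entirely --- for instance, by bounding $[R:F(R)]$ directly from the weaker datum that only finitely many conjugacy classes of elements of each given order exist in $H$, combined with a careful analysis of how elements outside $F(H)$ create new conjugacy classes of cyclic subgroups through their action on the nilpotent part.
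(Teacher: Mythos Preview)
Your reduction to a uniform bound on $[H:F(H)]$ for finite quotients $H$ is a reasonable starting point (and the inverse-limit step does work, using that $G$ has property (FMHFG) by Lemma~\ref{lem:FMHFG}, so there are only finitely many open normal subgroups of index below any fixed bound). However, the proposal has two genuine problems.

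First, a minor error: $\bar H = H/\mathrm{sol}(H)$ is \emph{not} in general a direct product of non-abelian simple groups; it merely has trivial solvable radical (e.g.\ $\bar H$ could be $S_5$). What is true is that the \emph{socle} of $\bar H$ is such a direct product, and $\bar H$ embeds in the automorphism group of its socle. This is fixable along the lines of what the paper does in \S3, by working with chief factors rather than with $\bar H$ directly.

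Second, and more seriously, the solvable case is not a proof but a hope. You correctly identify the obstacle --- Sylow $p$-subgroups do not inherit bounded $\NCC$, so Theorem~\ref{main:prop} cannot be applied to them --- but you do not overcome it. Neither of your two suggested escape routes is developed, and neither is straightforward: there is no reason to expect solvability alone to force Sylow subgroups to have bounded $\NCC$, and ``a careful analysis of how elements outside $F(H)$ create new conjugacy classes'' is exactly the hard content of the theorem.

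The paper avoids this obstacle by a completely different route. Rather than going prime by prime through Sylow subgroups, it works with the \emph{derived series} and the relative invariant $\CC(\cdot,G)$. The key steps are: (i) bound the derived length of the images of $G$ acting on its abelian chief factors, using Seager's theorem on solvable primitive permutation groups (Theorem~\ref{thm:Seager}), thereby showing that some $G^{(k)}$ is pronilpotent; and (ii) for prosolvable $G$ with $G^{(k)}$ pronilpotent, reduce via Hall's nilpotency criterion to a delicate analysis of \emph{metabelian} quotients (Proposition~\ref{lem:metabelian}), showing that in an extension $1\to A\to G\to C\to 1$ with $A,C$ abelian and $\NCC(G)<\infty$, the image of $C$ in $\Aut(A)$ is finite. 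Neither step attempts to control individual Sylow subgroups; the argument is global throughout.
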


Note that Theorem~\ref{main:prop}, Proposition~\ref{main:pronilpotent} and Theorem~\ref{thm:mainprofinite} completely characterize profinite group which have an open subgroup with finite NCC. However, they do not provide a classification of profinite groups with finite NCC up to isomorphism since finiteness of NCC is not necessarily preserved by passing to finite index overgroups. 

\skv

{\bf Profinite groups with countable NCC.} Recently Jaikin-Zapirain and Nikolov~\cite{JN} proved that any infinite compact Hausdorff group (in particular, any infinite profinite group) has uncountably many conjugacy classes 
(see also \cite{Wil1}~and~\cite{Wil2} for some more refined results of this type).
Several recent papers investigated profinite groups in which a countable union of procyclic subgroups (without taking conjugates) contains a large portion of the group (in a suitable sense) -- see, e.g. \cite{AS}.
 
As a natural continuation of this line of research with our results, we propose the following problem.

\begin{Problem} 
\label{prob:countableNCC}
Classify profinite groups with {\bf countable} NCC.
\end{Problem}

A simple example of a profinite group with countable, but infinite NCC is given by $\dbZ_p\times \dbZ/p\dbZ$. More generally, it is easy to show
that every virtually procyclic group has countably many maximal procyclic subgroups, and therefore every profinite group with (BVC) has countable NCC.
There do exist groups with countable NCC and without (BVC), e.g. $\dbZ_p^{\times}\ltimes \dbZ_p$ (the group of affine transformations of $\dbZ_p$) and $\PGL_2(\dbZ_p)$. One can check that $\dbZ_p^{\times}\ltimes \dbZ_p$ has countable NCC directly from definition. The latter combined with the proof of \cite[Theorem~G]{BJL}
implies that $\PGL_2(\dbZ_p)$ has countable NCC. Despite these additional examples, it is feasible that the class of infinite profinite groups with countable NCC is still quite small. 

A standard argument using Baire Category Theorem shows that a profinite group $G$ with countable NCC must have a procyclic subgroup $C$ such that
$\cup_{g\in G}C^g$ has non-empty interior. Thus, as a further generalization of Problem~\ref{prob:countableNCC} one can ask what can be said about the groups with the latter property. We are grateful to Colin Reid for proposing this question.
We refer the reader to \cite{Wes} for a discussion of the corresponding problem about conjugacy classes of elements (classify profinite groups which have a conjugacy class with non-empty interior); see also
\cite[Question~2]{JN}.

\subsection{Outline of the paper} $\empty$ 
\label{sec:outline}

\begin{itemize}
\item In \S~2, we introduce a certain generalization of the NCC invariant, $\CC(G,\Phi)$, where $\Phi$ is a group acting on $G$ by automorphisms, and prove some general results about it.
\item The proof of Theorem~\ref{thm:mainprofinite} is divided into three parts, which will be established in \S~3,~4~and~7, respectively.
\begin{itemize}
\item In \S~3 we prove that a profinite group with finite NCC has an open pro-$\Sol$ subgroup (which also has finite NCC by Lemma~\ref{lem:finiteindex}). Here $\Sol$ denotes the class of finite solvable groups.
\item In \S~4 we prove that if $G$ is a pro-$\Sol$ group with finite NCC, then  for some $k\in\dbN$ the $k^{\rm th}$ term of its derived series $G^{(k)}$
is pro-$\Nil$.
\item Finally, in \S~7 we prove that if $G$ is a pro-$\Sol$ group with finite NCC such that $G^{(k)}$
is pro-$\Nil$ for some $k\in\dbN$, then $G$ is virtually pro-$\Nil$.
\end{itemize}
\item In \S~5 we will prove Theorem~\ref{main:discrete} assuming Theorem~\ref{thm:mainprofinite} (whose proof will be completed in \S~7)
and Theorem~\ref{main:prop}.
\item In \S~6 we will prove that pro-$p$ groups with finite NCC are $p$-adic analytic and then use this result to prove
Theorem~\ref{main:prop}. 
\item Finally, in \S~8 we will introduce property (BVC), a certain variation of finiteness of NCC, and explain 
why Theorem~\ref{main:discrete} settles certain questions in topology dealing with classifying spaces for families of subgroups.
\end{itemize}

\paragraph {\bf Acknowledgments. }\rm We are indebted to the anonymous referee for extremely helpful and detailed feedback on an earlier version of the paper, which resulted in major improvements in the exposition. In addition, the proofs of the following results in the present version were either explicitly proposed by the referee or heavily rely on the suggestions from the report:  Lemma~\ref{cor:proptorsion}, 
Lemma~\ref{lem:NCCrest}, Lemma~\ref{lem:Z}, Lemma~\ref{lem:ji2}, Lemma~\ref{lem:dp}, Proposition~\ref{reduction:ressolvable0}, Theorem~\ref{main:prop} and Proposition~\ref{lem:metabelian}.

We are very grateful to Xiaolei Wu for asking\footnote{Xiaolei Wu asked this question at a `Functor Categories for Groups' meeting, which was supported by a London Mathematical Society Joint Research Group grant.} 
us a version of von Puttkamer's question~\cite[Question~5.0.9]{vP}. We would like to thank Andrei Rapinchuk for illuminating discussions and suggesting the reference \cite{Be} and
Alex Lubotzky for bringing \cite{BJL} to our attention. We would also like to thank Andrei Jaikin-Zapirain, Ian Leary, Alex Lubotzky, Colin Reid 
and John Wilson for helpful feedback on earlier versions of this paper.

\section{Cyclic covering number relative to a group of automorphisms}

\subsection{Covering numbers for subgroups, quotients and direct products}

While we are primarily interested in NCC, in the proofs it will be very convenient to work with a certain generalization of NCC defined below which has better hereditary properties.

\begin{Definition}\rm Let $G$ be a discrete (resp. profinite) group and $\Phi$ a group acting on $G$ 
by group automorphisms\footnote{By a homomorphism between profinite groups we will always mean a continuous homomorphism unless explicitly indicated otherwise.}. A {\it cyclic (resp. procyclic) $\Phi$-cover} of $G$ is a collection of cyclic (resp. procyclic) subgroups $\{C_i\}_{i\in I}$ of $G$
such that $G=\bigcup\limits_{i\in I,\phi\in \Phi}\phi(C_i)$. We define $\CC(G,\Phi)$ to be the smallest number of subgroups in a cyclic (resp. procyclic)
$\Phi$-cover of $G$.
\end{Definition}

Note that $\NCC(G)=\CC(G,G)$ (where $G$ acts on itself by conjugation).

\begin{Lemma}
\label{CC:subgroup}
Let $G$ be a group and $\Phi$ a group acting on $G$ by automorphisms. The following hold:
\begin{itemize}
\item[(i)]If $H$ is a $\Phi$-invariant subgroup of $G$, then $\CC(H,\Phi)\leq \CC(G,\Phi)$.
In particular, if $H$ is any normal subgroup of $G$, then $\CC(H,G)\leq \NCC(G)$.
\item[(ii)] If $K$ is a $\Phi$-invariant normal subgroup of $G$ (so that $\Phi$ naturally acts on $G/K$), then
$\CC(G/K,\Phi)\leq \CC(G,\Phi)$. In particular, if $K$ is any normal subgroup of $G$, then $\NCC(G/K)\leq \NCC(G)$.
\item[(iii)] If $\Psi$ is a finite index subgroup of $\Phi$, then $\CC(G,\Psi)\leq [\Phi:\Psi]\CC(G,\Phi)$. 
\end{itemize}
\end{Lemma}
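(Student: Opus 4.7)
The plan is to prove each part by taking a minimal cyclic/procyclic $\Phi$-cover of $G$ of size $n = \CC(G,\Phi)$ and cooking up an appropriate cover on the other side. Throughout, the only facts I will need are that subgroups (in the profinite case, closed subgroups) of cyclic/procyclic groups are cyclic/procyclic, and that continuous homomorphic images of cyclic/procyclic groups are cyclic/procyclic.

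For (i), let $\{C_1,\dots,C_n\}$ be a minimal cyclic/procyclic $\Phi$-cover of $G$. I claim that $\{H\cap C_1,\dots,H\cap C_n\}$ is a $\Phi$-cover of $H$. Indeed, each $H\cap C_i$ is $\Phi$-invariance-compatible in the following sense: given $h\in H$, there exist $i$ and $\phi\in\Phi$ with $h=\phi(c)$ for some $c\in C_i$; then $c=\phi^{-1}(h)\in\phi^{-1}(H)=H$ since $H$ is $\Phi$-invariant, so $c\in H\cap C_i$ and hence $h=\phi(c)\in\phi(H\cap C_i)$. Each $H\cap C_i$ is a (closed) subgroup of a cyclic/procyclic group, hence cyclic/procyclic, finishing the argument. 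The statement for $\NCC$ follows by specializing $\Phi=G$ acting by conjugation, noting that normal subgroups are in particular conjugation-invariant.

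For (ii), let $\pi\colon G\to G/K$ be the quotient map, and again let $\{C_1,\dots,C_n\}$ be a minimal $\Phi$-cover of $G$. Since $K$ is $\Phi$-invariant, $\Phi$ acts on $G/K$ via $\bar\phi(gK)=\phi(g)K$, and for every $\phi\in\Phi$ one has $\pi\circ\phi=\bar\phi\circ\pi$. It follows that $\{\pi(C_1),\dots,\pi(C_n)\}$ is a $\Phi$-cover of $G/K$: given $gK\in G/K$, write $g=\phi(c)$ with $c\in C_i$, so $gK=\bar\phi(\pi(c))\in\bar\phi(\pi(C_i))$. Each $\pi(C_i)$ is the image of a cyclic/procyclic group under a (continuous) homomorphism, hence cyclic/procyclic.

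For (iii), let $\{C_1,\dots,C_n\}$ be a minimal $\Phi$-cover of $G$ and let $\phi_1,\dots,\phi_m$ be a set of right coset representatives for $\Psi$ in $\Phi$, so that $\Phi=\bigcup_{j=1}^m \Psi\phi_j$ with $m=[\Phi:\Psi]$. Given $g\in G$ write $g=\phi(c)$ with $\phi\in\Phi$, $c\in C_i$, and then $\phi=\psi\phi_j$ for some $\psi\in\Psi$, so $g=\psi(\phi_j(c))\in\psi(\phi_j(C_i))$. Hence the collection $\{\phi_j(C_i)\}_{1\leq i\leq n,\,1\leq j\leq m}$ is a $\Psi$-cover of $G$ of cardinality $nm=[\Phi:\Psi]\,\CC(G,\Phi)$, and each $\phi_j(C_i)$ is cyclic/procyclic as the image of a cyclic/procyclic group under an automorphism.

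There is no real obstacle here; the only minor point requiring care is to verify that the chosen families remain cyclic/procyclic in the profinite setting (closedness of intersections, continuity of the images). Everything else is routine bookkeeping with the action of $\Phi$.
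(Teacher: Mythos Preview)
Your proof is correct and follows exactly the approach the paper has in mind. The paper's own proof is terser---it simply declares (i) and (ii) obvious and for (iii) observes that any $\Phi$-orbit splits into at most $[\Phi:\Psi]$ $\Psi$-orbits---but your argument is precisely the unpacking of these remarks.
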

\begin{proof} (i) and (ii) are obvious, and (iii) follows from the fact that for any action of $\Phi$ on a set, any orbit of $\Phi$ is a union of at most $[\Phi:\Psi]$ orbits of $\Psi$.
\end{proof}

The next result which follows from Lemma~\ref{CC:subgroup} and has been well known before is particularly useful.

\begin{Lemma}
\label{lem:finiteindex}
Let $G$ be a group with finite NCC and $H$ a subgroup of finite index. Then $H$ also has finite NCC and in fact $\NCC(H)\leq [G:H]\cdot \NCC(G)$
\end{Lemma}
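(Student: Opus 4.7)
The plan is to leverage the auxiliary invariant $\CC(G,\Phi)$ introduced in this section and apply Lemma~\ref{CC:subgroup} twice. The two key observations are that $H$ is a subgroup of finite index in $G$ (so $H$ is a finite-index subgroup of the group $G$ viewed as acting on itself by conjugation) and that $H$, as a subgroup of $G$, is $H$-invariant under this conjugation action (since $H$ normalizes itself).

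First, I would apply part~(iii) of Lemma~\ref{CC:subgroup} with $\Phi=G$ and $\Psi=H$, both regarded as subgroups of $\Aut(G)$ via the conjugation action on $G$. This yields
\[
\CC(G,H)\leq [G:H]\cdot \CC(G,G)=[G:H]\cdot \NCC(G),
\]
the content being that each $G$-conjugacy orbit in $G$ splits into at most $[G:H]$ $H$-conjugacy orbits. Next, since $H$ is $H$-invariant under conjugation by itself, part~(i) of Lemma~\ref{CC:subgroup} gives $\CC(H,H)\leq \CC(G,H)$. Combining these with the tautology $\CC(H,H)=\NCC(H)$ immediately yields $\NCC(H)\leq [G:H]\cdot \NCC(G)$, as desired.

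There is essentially no obstacle here: the inequality falls out of the formalism once the correct two-step composition is identified. For concreteness, the same argument can be unpacked directly: choose a (pro)cyclic cover $C_1,\ldots,C_k$ of $G$ with $k=\NCC(G)$ whose $G$-conjugates cover $G$, and fix a set $T$ of left coset representatives for $H$ in $G$; then the $k[G:H]$ (pro)cyclic subgroups $\{C_i^t\cap H\}_{1\leq i\leq k,\,t\in T}$ form an $H$-conjugacy cover of $H$, using that for any $h'\in H$ one has $C_i^{th'}\cap H=(C_i^t\cap H)^{h'}$.
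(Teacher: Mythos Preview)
Your proof is correct and takes essentially the same approach as the paper: both use parts (i) and (iii) of Lemma~\ref{CC:subgroup}, just composed in the opposite order (the paper goes through the intermediate quantity $\CC(H,G)$ rather than your $\CC(G,H)$). Your ordering is arguably cleaner since it never requires $H$ to be $G$-invariant, and your explicit unpacking at the end is a nice sanity check.
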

\begin{proof}
We have $\NCC(H)=\CC(H,H)\leq [G:H]\cdot \CC(H,G)\leq [G:H]\cdot \NCC(G)$ where both $\CC$ numbers are with respect to the conjugation action, the first inequality holds by
Lemma~\ref{CC:subgroup}(iii) and the second inequality holds by Lemma~\ref{CC:subgroup}(i). 
\end{proof}

\begin{Definition}\rm 
Let $G$ be a profinite group. The {\it order of $G$} is the supernatural number defined as the least common multiple of the orders of finite quotients of
$G$ (a supernatural number is a formal product $\prod_{p}p^{\alpha_p}$ where $p$ ranges over all primes and each 
$\alpha_p\in\dbZ_{\geq 0}\cup\{\infty\}$).
\end{Definition}

\begin{Lemma}
\label{lem:product}
Let $G$ and $H$ be discrete or profinite groups and let $\Phi$ and $\Psi$ be groups acting by automorphisms on $G$ and $H$, respectively,
such that $\CC(G,\Phi)$ and $\CC(H,\Psi)$ are both finite. The following hold:
\begin{itemize}
\item[(a)] $\CC(G\times H, \Phi\times \Psi)\geq \CC(G,\Phi)\cdot \CC(H,\Psi)$. In particular, if $G$ and $H$ both have finite NCC,
$$\NCC(G\times H)\geq \NCC(G)\cdot \NCC(H).$$ 
\item[(b)] Assume now that $G$ and $H$ are profinite and have coprime orders. Then both inequalities in (a) must be equalities.
\end{itemize}

\end{Lemma}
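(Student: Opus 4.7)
For part (a), I would fix a (pro)cyclic $\Phi\times\Psi$-cover $\{C_1,\ldots,C_n\}$ of $G\times H$ with $n=\CC(G\times H,\Phi\times\Psi)$, let $A_i:=\pi_G(C_i)$ and $B_i:=\pi_H(C_i)$, and for each $h\in H$ set $I(h):=\{i:h\in\Psi B_i\}$. For any $g\in G$, the point $(g,h)$ lies in some $(\phi,\psi)(C_i)$, yielding $g\in\phi(A_i)$ and $h\in\psi(B_i)$; hence $\{A_i:i\in I(h)\}$ forms a $\Phi$-cover of $G$, so $|I(h)|\geq m:=\CC(G,\Phi)$ for every $h\in H$.

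The heart of the matter is to construct $h_1,\ldots,h_k\in H$ (with $k:=\CC(H,\Psi)$) whose index sets $I(h_j)$ are pairwise disjoint, which yields $n\geq\sum_j|I(h_j)|\geq mk$. I would choose a minimal $\Psi$-cover $\{B_1^{\ast},\ldots,B_k^{\ast}\}$ of $H$ whose members are all maximal (pro)cyclic subgroups---such a cover exists in the profinite case because every (pro)cyclic subgroup is contained in a maximal one (as noted in the Remark in the introduction), and enlarging each member of an arbitrary minimal cover to a maximal (pro)cyclic overgroup cannot increase the cover's size. By minimality, $B_j^{\ast}\not\subseteq\Psi B_{j'}^{\ast}$ for every $j'\neq j$, so each intersection $B_j^{\ast}\cap\psi(B_{j'}^{\ast})$ is a proper (pro)cyclic subgroup of $B_j^{\ast}$; the union of such proper subgroups lies inside the non-generator locus of $B_j^{\ast}$, so any (topological) generator $h_j$ of $B_j^{\ast}$ sits in $B_j^{\ast}\setminus\bigcup_{j'\neq j}\Psi B_{j'}^{\ast}$.

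To check disjointness, suppose $h_j,h_{j'}\in\psi(B_i)$ for some $i$ and $\psi$. Since $\langle h_j\rangle=B_j^{\ast}\leq\psi(B_i)$, maximality of $B_j^{\ast}$ among (pro)cyclic subgroups forces $\psi(B_i)=B_j^{\ast}$; the same argument applied to $h_{j'}$ forces $\psi(B_i)=B_{j'}^{\ast}$, contradicting $B_j^{\ast}\neq B_{j'}^{\ast}$. For part (b), the inequality $\CC(G\times H,\Phi\times\Psi)\leq\CC(G,\Phi)\cdot\CC(H,\Psi)$ is proved by taking minimal covers $\{A_i\}_{i=1}^m$ of $G$ and $\{B_j\}_{j=1}^k$ of $H$ and forming the product family $\{A_i\times B_j\}_{i,j}$: the coprimality of $|G|$ and $|H|$ makes each $A_i\times B_j$ (pro)cyclic, and any $(g,h)$ with $g\in\phi(A_i)$ and $h\in\psi(B_j)$ automatically lies in $(\phi,\psi)(A_i\times B_j)$, so this family is a $\Phi\times\Psi$-cover of size $mk$. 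Combined with (a) this yields equality, and the $\NCC$ equality is the specialization to $\Phi=G$ and $\Psi=H$ acting by conjugation.

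The main obstacle is the maximality argument behind the choice of the $h_j$'s: without the $B_j^{\ast}$ being maximal, two distinct members of a minimal cover could share a common (pro)cyclic overgroup (for instance when their orders are coprime primes and both fit inside a larger cyclic of the product order), which would potentially allow a single $B_i$ to contain both $h_j$ and $h_{j'}$ and break the disjointness of the $I(h_j)$. This is precisely the structural feature guaranteed in the profinite setting by the aforementioned Remark.
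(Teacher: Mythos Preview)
Your argument for part (b) is correct and matches the paper's. For part (a), your approach in the profinite case is essentially sound but differs from the paper's, and there is a genuine gap in the discrete case.

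\textbf{The gap.} The lemma is stated for discrete as well as profinite groups, and you explicitly rely on enlarging the members of a minimal $\Psi$-cover of $H$ to \emph{maximal} (pro)cyclic subgroups. As you yourself note, this is justified only in the profinite setting. In the discrete case maximal cyclic subgroups need not exist: for instance, take $H=(\dbQ,+)$ with $\Psi=\dbQ^{\times}$ acting by multiplication; then $\CC(H,\Psi)=1$ (the $\Psi$-orbit of $\dbZ$ already covers $\dbQ$), yet $H$ has no maximal cyclic subgroup. Without maximality your disjointness step collapses: from $h_j\in\psi_1(B_i)$ you can only conclude $B_j^{\ast}\subseteq\psi_1(B_i)$, not equality, so two distinct $B_j^{\ast}$, $B_{j'}^{\ast}$ may both sit inside $\Psi$-translates of the same $B_i$.

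\textbf{A minor slip.} In your disjointness check you write ``suppose $h_j,h_{j'}\in\psi(B_i)$ for some $i$ and $\psi$'', but membership of $i$ in $I(h_j)\cap I(h_{j'})$ only gives $h_j\in\psi_1(B_i)$ and $h_{j'}\in\psi_2(B_i)$ for possibly different $\psi_1,\psi_2$. In the profinite case this is harmless: maximality still forces $B_j^{\ast}=\psi_1(B_i)$ and $B_{j'}^{\ast}=\psi_2(B_i)$, so $B_{j'}^{\ast}=\psi_2\psi_1^{-1}(B_j^{\ast})$, contradicting minimality of the cover $\{B_j^{\ast}\}$.

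\textbf{Comparison with the paper.} The paper avoids maximal cyclics altogether by working directly with the projections $G_k,H_k$ of the given cover $\{C_k\}$. It repeatedly replaces $G_i$ by $G_j$ whenever $G_i\subseteq\phi(G_j)$, arriving at a cover $\{G_k\times H_{k,l}\}$ in which the $G_k$ are pairwise $\Phi$-incomparable; then for each fixed $k$, a generator $x_k$ of $G_k$ lies in no $\phi(G_{k'})$ with $k'\neq k$, which forces $\{H_{k,l}\}_l$ to be a $\Psi$-cover of $H$. This replacement trick needs only finiteness of the cover, not existence of maximal cyclics, and therefore works uniformly in the discrete and profinite cases. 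Your approach, by contrast, imports an auxiliary minimal cover of $H$ and uses maximality to pin each $B_i$ to a unique $B_j^{\ast}$; this is clean in the profinite world but does not transfer.
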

\begin{proof} (a) For simplicity we will present a proof in the discrete case. The argument in the profinite case is completely analogous.
Let $n=\CC(G,\Phi)$, $m=\CC(H,\Psi)$ and $t=\CC(G\times H, \Phi\times \Psi)$, and assume that $t$ is finite (if $t$ is infinite, there is nothing to prove).

Let $\{ C_k \}_{k=1}^t$ be a cyclic $\Phi\times \Psi$-cover of $G\times H$.
Let $G_k$ and $H_k$ denote the projections of $C_k$ to $G$ and $H$, respectively. Obviously, $G_k$ and $H_k$ are cyclic and $(\phi \times \psi)(C_k) \subseteq \phi(G_k) \times \psi(H_k)$ for all $\phi\in\Phi$ and $\psi\in\Psi$. (Notice that $\phi(G_k) \times \psi(H_k)$ need
not be cyclic). Thus, $\{ G_k \times H_k \}_{k=1}^t$ is a $\Phi \times \Psi$-cover of $G \times H$, that is, 
$$G \times H=\bigcup\limits_{1 \leq k \leq t,\phi\in \Phi, \psi\in \Psi} (\phi \times \psi) (G_k \times H_k)=\bigcup\limits_{1 \leq k \leq t,\phi\in \Phi, \psi\in \Psi} \phi(G_k) \times\psi(H_k). \eqno (***)$$ 

If $G_i \subseteq \phi(G_{j})$ for some $i \neq j$ and $\phi\in \Phi$, we can replace $G_i$ by $G_{j}$ and still have a $\Phi\times \Psi$-cover of $G\times H$. After applying this operation finitely many times, we obtain a new cover which can be written as $\{ G_k \times H_{k,j} \}_{1 \leq k \leq n', 1 \leq j \leq m_k}$ where for $i\neq j$ we have $G_i \not\subseteq \phi(G_{j})$ for any $\phi\in \Phi$. By construction the number of sets in the new cover does not exceed the number of sets in the original cover, that is, $m_1+m_2+\cdots+m_{n'}\leq t$. Therefore, it suffices to show that $n' \geq n$ and $m_k \geq m$ for each $k$. 

Projecting both sides of (***) to the first component, we see that $\{ G_k \}_{k=1}^{n'}$ is a cyclic $\Phi$-cover of $G$, so $n' \geq n$. We now need to show that for a fixed $k$ the collection $\{ H_{k,j} \}_{j=1}^{m_k}$ is a cyclic $\Psi$-cover of $H$. Let $x_k$ be a generator of $G_k$. If $i \neq j$, then 
$x_i \not\in \phi(G_{j})$ for any $\phi\in \Phi$ (for otherwise, $G_i \subseteq \phi(G_{j})$ contrary to our assumption). Hence for any $h\in H$, the pair $(x_k,h)$ must be in $\phi(G_k) \times \psi(H_{k,j})$ for some $1 \leq j \leq m_k$, $\phi \in \Phi$ and $\psi \in \Psi$ , so 
$h \in\psi(H_{k,j})$. We conclude that $\{ H_{k,j} \}_{j=1}^{m_k}$ is a cyclic $\Psi$-cover of $H$ and thus $m_k \geq m$ as desired.

\skv
(b) Let $\{ G_i \}_{i=1}^n$ be a cyclic $\Phi$-cover of $G$ and $\{ H_j \}_{j=1}^m$ be a cyclic $\Psi$-cover of $H$. Since $G$ and $H$ have coprime orders, this is true also for every $G_i$ and $H_j$ and therefore $G_i \times H_j$ is procyclic (by the Chinese Remainder Theorem). Hence
$\{ G_i \times H_j \}_{1 \leq i \leq n, 1 \leq j \leq m}$ is a cyclic $\Phi\times \Psi$-cover of $G\times H$. Thus $\CC(G\times H, \Phi\times \Psi)\leq  nm= \CC(G,\Phi)\cdot \CC(H,\Psi)$,
and by (a) the equality must hold.
\end{proof}

\subsection{Some restrictions on groups with finite NCC}

In this subsection we will establish several results which impose restrictions on the structure of discrete and profinite groups with finite NCC. 

The following lemma proposed to us by the referee yields a strong restriction on the
torsion in pro-$p$ groups with finite NCC:

\begin{Lemma}
\label{cor:proptorsion}
Let $G$ be a pro-$p$ group with finite NCC and $T$ the set of its torsion elements. Then the set $T\setminus\{1\}$ is open.
In particular, either $T=\{1\}$ or $T$ has non-empty interior.
\end{Lemma}
\begin{proof} If $G=T$, there is nothing to prove, so assume that $G\neq T$.

By assumption there exist finitely many elements $x_1,\ldots, x_k$ such that 
$G=\bigcup\limits_{i=1}^k\overline{\la x_i\ra}^G$. 
Since $G$ is pro-$p$, any procyclic subgroup of $G$
is either finite or torsion-free. Thus, if $Z=\bigcup\limits_{x_i\not\in\, T}\overline{\la x_i\ra}^G$,
then $Z=(G\setminus T)\cup \{1\}=G\setminus (T\setminus\{1\})$ (note that $Z$ is non-empty and in particular contains $1$
since $G\neq T$).
\skv
Each set $\overline{\la x_i\ra}^G$ is compact (and hence closed in $G$) as it is a continuous image of the compact
topological space $\overline{\la x_i\ra}\times G$. Hence $Z$ is closed in $G$ and therefore
$T\setminus\{1\}=G\setminus Z$ is open in $G$.
\end{proof}

We will explicitly use Lemma~\ref{cor:proptorsion} in the proof of Theorem~\ref{main:prop} in \S~6; however, we will also
need the following generalization, both in \S~6 and later in this subsection: 

\begin{Lemma}
\label{lem:NCCrest} Let $G$ be a profinite group with finite NCC. Suppose that $G$ can be written as a disjoint union $G=A\sqcup B\sqcup C$
such that the following conditions hold:
\begin{itemize}
\item[(1)] $A, B$ and $C$ are normals subsets (that is, invariant under conjugation);
\item[(2)] $\overline{\la x\ra}\cap A=\emptyset$ for all $x\in B\sqcup C$ (where $\overline{\la x\ra}$ is the closure of $\la x\ra$);
\item[(3)] $\overline{\la a\ra}\cap C=\emptyset$ for all $a\in A$.
\end{itemize}
Then $G$ has an open subset $U$ such that $A\subseteq U\subseteq A\sqcup B$.
\end{Lemma}
\begin{Remark}\rm The last assertion of Lemma~\ref{cor:proptorsion} follows from Lemma~\ref{lem:NCCrest} applied
with $A=T\setminus\{1\}$, $B=\{1\}$ and $C=G\setminus T$, where $T$ is the set of torsion elements.
\end{Remark}
\begin{proof} As in the proof of Lemma~\ref{cor:proptorsion}, there exist finitely many elements $x_1,\ldots, x_k$ such that 
$G=\bigcup\limits_{i=1}^k\overline{\la x_i\ra}^G$. Let $I=\{i: x_i\in A\}$,
$Z=\bigcup\limits_{i\not\in I}\overline{\la x_i\ra}^G$ and $U=G\setminus Z$.
\skv
As in the proof of Lemma~\ref{cor:proptorsion}, $Z$ is closed whence $U$ is open.
Conditions (1) and (2) imply that $A\cap Z=\emptyset$, so $A$ is contained in $U$. On the other hand,
(1) and (3) imply that $\bigcup\limits_{i\in I}\overline{\la x_i\ra}^G\cap C=\emptyset$, so
$C\subseteq Z$ and therefore $U=G\setminus Z\subseteq G\setminus C=A\sqcup B$.
\end{proof}

The remaining results in this subsection deal with quotients of groups with finite NCC. We start with the technically easier discrete case.

\begin{Lemma}
\label{lem:Z}
 Let $G$ be a residually finite discrete group with finite NCC. Then either $G$ is infinite cyclic or $G$ has finite abelianization.
\end{Lemma}

\begin{proof} The abelianization $G/[G,G]$ is an abelian group with finite NCC, so it is a union of finitely many cyclic subgroups
and in particular finitely generated. Thus either $G/[G,G]$ is finite or $G/[G,G]$ maps onto $\dbZ$ (whence so does $G$). In the latter
case we can write $G=H\rtimes \dbZ$ for some normal subgroup $H$, and it remains to show that $H=\{1\}$.

Suppose that $H\neq \{1\}$. Since $G$ is residually finite, it has a finite index normal subgroup
$U$ which does not contain $H$. But then $G/(U\cap H)\cong F\rtimes \dbZ$ for some non-trivial finite group $F$ (isomorphic to $H/U\cap H$),
and by \cite[Lemma~3.7]{vPW2} any semidirect product of this form has infinite NCC, a contradiction.
\end{proof}

In the profinite case we will prove a somewhat similar, but more technical result:

\begin{Lemma}
\label{lem:ji2} 
Let $G$ be a profinite group, $H$ a (closed) normal subgroup of $G$ and $Q=G/H$. Suppose that $Q$ has a pro-$p$ element $x$ of infinite
order (that is, $\overline{\la x\ra}\cong \dbZ_p$) and $|H|$ is divisible by $p$. Then $G$ has infinite NCC. 
\end{Lemma}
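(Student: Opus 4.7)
The plan is to argue by contradiction in close analogy with Lemma~\ref{lem:Z}, with the role of the infinite cyclic quotient $\la x\ra\cong\dbZ$ now played by the $\dbZ_p$-subgroup $\overline{\la x\ra}$ of $Q$. First I would assume $\NCC(G)=k<\infty$ and fix procyclic subgroups $C_1,\ldots,C_k\leq G$ whose $G$-conjugates cover $G$. By Observation~\ref{obs:prop}(d), lift $x$ to a pro-$p$ element $g\in G$ (so $\overline{\la g\ra}\cong\dbZ_p$). Since $|H|$ is divisible by $p$, a Sylow pro-$p$ subgroup of $H$ is nontrivial, so pick a nontrivial pro-$p$ element $h\in H$; and since $h\neq 1$, pick a continuous epimorphism $\phi:G\to F$ to a finite group with $\phi(h)\neq 1$, setting $n:=|F|$ (divisible by $p$, because $\phi(h)$ is a nontrivial $p$-element of $F$).

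Next, consider the test sequence $g_i:=g^{n^i}h$ for $i\geq 1$. Each $g_i$ lies in some $G$-conjugate of some $C_c$, so by pigeonhole there is an infinite $I\subseteq\dbN$ and a fixed $C\in\{C_1,\ldots,C_k\}$ with $g_i=y_ic_iy_i^{-1}$, $c_i\in C$, $y_i\in G$, for each $i\in I$. Writing $c_i=t^{\alpha_i}$ for a topological generator $t$ of $C$ and $\alpha_i\in\hat\dbZ$, projection to $Q$ yields $x^{n^i}=\bar y_i\,\pi(t)^{\alpha_i}\,\bar y_i^{-1}$; since $x^{n^i}$ is a pro-$p$ element of infinite order, the pro-$p$ part $\pi(C)_p$ of $\pi(C)$ must be isomorphic to $\dbZ_p$. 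Letting $s$ generate $\pi(C)_p$ and $\beta_i:=\alpha_i^{(p)}\in\dbZ_p\setminus\{0\}$, one extracts $x^{n^i}=\bar y_i\,s^{\beta_i}\,\bar y_i^{-1}$; combining with $x^{n^j}=(x^{n^i})^{n^{j-i}}$ shows that $s^{\beta_j}$ is $Q$-conjugate to $s^{n^{j-i}\beta_i}$.

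The target is to show that, for some pair $i<j$ in $I$, $g_i^{n^{j-i}}\sim_G g_j$; granting this, applying $\phi$ gives $\phi(g_i^{n^{j-i}})=\phi(h)^{n^{j-i}}=1$ (using $\phi(g)^{n^l}=1$ for $l\geq 1$ and $|F|\mid n^{j-i}$) while $\phi(g_j)=\phi(h)\neq 1$, so the conjugacy forces $1\sim_F\phi(h)$, hence $\phi(h)=1$, a contradiction. Since $g_i^{n^{j-i}}=y_i\,t^{n^{j-i}\alpha_i}\,y_i^{-1}$ and $g_j=y_j\,t^{\alpha_j}\,y_j^{-1}$, it suffices to show $t^{n^{j-i}\alpha_i}\sim_G t^{\alpha_j}$. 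Exploiting the $Q$-level relation on the $s$-powers together with the flexibility to modify $y_i,y_j$ by centralizers of $c_i,c_j$ (and, if needed, to replace $t$ by another topological generator of $C$), one converts the $Q$-conjugacy into the desired $G$-conjugacy of $t$-powers.

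The main obstacle I anticipate is precisely this last step. In Lemma~\ref{lem:Z}, projection to $\la x\ra\cong\dbZ$ immediately delivers an equality $\alpha_j=n^{j-i}\alpha_i$ on integer exponents, making the final $G$-conjugacy automatic. In the profinite case, however, $\hat\dbZ=\prod_q\dbZ_q$ has zero divisors, and two elements of a $\dbZ_p$-subgroup of $Q$ can be $Q$-conjugate without being equal, so one must track the pro-$p$ and pro-$p'$ components of $\alpha_i,\alpha_j$ carefully in order to convert the $Q$-level conjugacy of the $s^{\beta_\bullet}$ into a bona fide $G$-conjugacy of the $t^{\alpha_\bullet}$; this bookkeeping is the ``extra technicality'' alluded to in the paragraph preceding the lemma.
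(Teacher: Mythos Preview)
Your setup closely parallels the paper's: lift $x$ to a pro-$p$ element $g$, pick $h\in H$ detected in a finite quotient, form a test sequence $g_i=g^{m^i}h$, and suppose two members $g_i,g_j$ land in conjugates of a common procyclic $\overline{\la t\ra}$, say $g_i\sim t^{\alpha}$ and $g_j\sim t^{\beta}$ with $\alpha,\beta\in\widehat{\dbZ}$. The divergence, and the gap, is in how you finish.

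You aim to prove $g_i^{n^{j-i}}\sim_G g_j$, which you reduce to $t^{n^{j-i}\alpha}\sim_G t^{\beta}$, and then you propose to ``convert the $Q$-conjugacy into the desired $G$-conjugacy of $t$-powers'' via centralizer adjustments and a change of generator of $C$. This step is not actually carried out, and there is no reason to expect it to go through: a $Q$-conjugacy between two powers of $\pi(t)$ does not in general lift to a $G$-conjugacy between the corresponding powers of $t$, and neither adjusting $y_i,y_j$ by centralizers of $c_i,c_j$ nor replacing $t$ by another generator of $\overline{\la t\ra}$ changes the exponent in $\widehat{\dbZ}$ by more than a unit. You correctly identified this as the obstacle, but your sketch does not overcome it.

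The paper sidesteps the lifting problem entirely. It never tries to produce a $G$-conjugacy between $t$-powers. Instead it extracts a single numerical invariant of $\alpha$ and $\beta$, namely the $p$-adic valuation $\nu_p:\widehat{\dbZ}\to\dbZ_{\geq 0}\cup\{\infty\}$, and derives contradictory constraints on $\nu_p(\alpha)$ and $\nu_p(\beta)$ from the two projections. Concretely: the paper arranges $\phi(g)=1$ and $\phi(h)$ of order exactly $p$, and uses $g_k=g^{p^k}h$. Projecting $g_i\sim t^{\alpha}$, $g_j\sim t^{\beta}$ to the finite quotient $G/U$ yields $\phi(h)\sim\phi(t)^{\alpha}$ and $\phi(h)\sim\phi(t)^{\beta}$; since $\phi(h)$ has order $p$, comparing $p$-parts of orders forces $\nu_p(\alpha)=\nu_p(\beta)$ (both finite). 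Projecting to $Q=G/H$ gives $x^{p^i}\sim\pi(t)^{\alpha}$ and $x^{p^j}\sim\pi(t)^{\beta}$; passing to a large enough finite quotient of $Q$ and again comparing $p$-parts of orders forces $\nu_p(\beta)-\nu_p(\alpha)=j-i>0$. These two conclusions are incompatible. The ``extra technicality'' the paper alludes to is this valuation bookkeeping, not any lifting of conjugacy.
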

\begin{proof} We first consider the special case where $G=H\times Q$ and $|H|=p$. 
Given an element $g\in G$, let $ord_p(g)\in\dbZ_{\geq 0}\cup\{\infty\}$
denote the exponent of $p$ in the order of $g$ (considered as a supernatural number). Equivalently, $p^{ord_p(g)}$
is the order of the Sylow pro-$p$ subgroup of $\overline{\la g\ra}$.

Suppose now that $G$ has finite NCC. Let $A=\{g\in G: 0<ord_p(g)<\infty\}$, $B=\{g\in G: ord_p(g)=0\}$ and $C=\{g\in G: ord_p(g)=\infty\}$. Clearly,
$G=A\sqcup B\sqcup C$, and it is straightforward to check that the hypotheses of Lemma~\ref{lem:NCCrest} hold.
Thus, $G$ has an open subset $U$ with $A\subseteq U\subseteq A\sqcup B$.

Consider the pro-$p$ subgroup $P=H\times \overline{\la x\ra}\cong \dbZ/p\dbZ\times \dbZ_p$.
Then $A\cap P=H\setminus\{1\}$ and $B\cap P=\{1\}$, so $U\cap P$ is an open subset of $P$ containing
$H\setminus\{1\}$ and contained in $H$. Thus, $P$ contains a non-trivial finite open set, a contradiction.
\skv

We now treat the general case. Since $|H|$ is divisible by $p$ and the topology on $H$ is induced from $G$, there exists an open normal subgroup $U$ of $G$ such that
the image of $H$ in $G/U$ has order divisible by $p$. Let $\pi_U:G\to G/U$ and $\pi_H:G\to G/H=Q$ be the natural
projections, and consider the map $\pi:G\to G/U\times Q$ given by $\pi(g)=(\pi_U(g),\pi_H(g))$. Then 
$\pi(UH)$ is an open subgroup of $\pi(G)$, and note that $\pi(UH)=\pi_U(H)\times \pi_H(U)$. By construction $\pi_U(H)$ has
an element of order $p$, call it $g$, and $\pi_H(U)$ has a pro-$p$ element of infinite order (being an open subgroup of $Q=\pi_H(G)$).
Hence $\la g\ra\times \pi_H(U)$ is an open subgroup of $\pi(G)$ which satisfies the hypotheses of the special case considered
at the beginning of the proof and thus has infinite NCC. Since finiteness of NCC is inherited by open subgroups and homomorphic images, it follows that $G$ also has infinite NCC, as desired.
\end{proof}

The following corollary of Lemma~\ref{lem:ji2} yields a much stronger conclusion in the pro-$p$ case.

\begin{Corollary}
\label{lem:ji}
Let $G$ be an infinite pro-$p$ group with finite NCC. Then $G$ must be just-infinite (that is, all of its proper continuous quotients are finite).
\end{Corollary}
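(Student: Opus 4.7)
The plan is to argue by contradiction and reduce to Lemma~\ref{lem:ji2}. Suppose $G$ has a non-trivial closed normal subgroup $H$ with $Q = G/H$ infinite. By Lemma~\ref{CC:subgroup}(ii), $Q$ is an infinite pro-$p$ group with finite NCC. The strategy is to produce in $Q$ a pro-$p$ element of infinite order and then apply Lemma~\ref{lem:ji2}, noting that $|H|$ is divisible by $p$ because $H$ is a non-trivial closed subgroup of a pro-$p$ group.

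The key auxiliary claim to establish is that every infinite pro-$p$ group $Q$ with finite NCC contains an element of infinite order. First, I would observe that finiteness of NCC forces $Q$ to be topologically finitely generated. Let $\{C_i\}_{i=1}^n$ be procyclic subgroups whose conjugates cover $Q$, and let $x_i$ be a topological generator of $C_i$. Since $[Q,Q] \sub \Phi(Q)$, conjugation acts trivially on the Frattini quotient $Q/\Phi(Q)$, so all conjugates of $x_i$ have the same image there. Consequently $Q/\Phi(Q)$, which is an elementary abelian pro-$p$ group, is topologically generated by the $n$ images of the $x_i$, giving $d(Q) \leq n$.

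Next, I would suppose for contradiction that $Q$ is torsion. Then every procyclic subgroup of $Q$ is finite, so each $C_i$ has some finite $p$-power order $p^{k_i}$, which forces $Q$ to have exponent dividing $p^{\max_i k_i}$. By Zelmanov's positive solution to the Restricted Burnside Problem, a topologically finitely generated pro-$p$ group of finite exponent is finite, contradicting the assumption that $Q$ is infinite. Hence $Q$ contains some element $x$ of infinite order, and since $Q$ is pro-$p$ we have $\overline{\la x \ra} \cong \dbZ_p$. Applying Lemma~\ref{lem:ji2} to the pair $(G, H)$ now yields $\NCC(G) = \infty$, the desired contradiction.

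The main obstacle is producing the element of infinite order in $Q$; the combination of the Frattini-quotient bound on $d(Q)$ with Zelmanov's theorem is the essential input, since without it one cannot rule out the possibility that $Q$ is an infinite torsion pro-$p$ group covered by finitely many conjugacy classes of finite cyclic subgroups. Once an element of infinite order is available, Lemma~\ref{lem:ji2} closes the argument immediately.
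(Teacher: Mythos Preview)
Your proof is correct and follows essentially the same approach as the paper: both arguments reduce to Lemma~\ref{lem:ji2} by first showing that the quotient $Q=G/H$ is finitely generated (via the Frattini quotient) and then invoking Zelmanov's theorem to produce an element of infinite order in $Q$. The only cosmetic difference is that the paper first proves $G$ itself is finitely generated (using that the abelianization has finite NCC) and then passes to the quotient, whereas you argue directly for $Q$; the underlying ideas are the same.
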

\begin{proof} First by Lemma~\ref{propbound:NCC} below, $G$ is finitely generated. Suppose that $G$ has a non-trivial closed normal subgroup $H$ such that $G/H$ is infinite. By the positive solution to the general Burnside problem for pro-$p$ groups \cite{Ze}, a finitely generated torsion pro-$p$ group is finite. Thus, $G/H$ must contain an element $x$ of infinite order. Then $x$ and $H$ trivially satisfy the hypotheses
of Lemma~\ref{lem:ji2} and hence $G$ has infinite NCC, contrary to our assumption.
\end{proof}

\subsection{Other lower bounds on NCC}
In this subsection we collect some additional results which provide either a lower bound on NCC of a group or a restriction on the structure of a group
with finite NCC.
\skv

We start by bounding the NCC of a pro-$p$ group in terms of its number of generators.
\begin{Lemma}
\label{propbound:NCC} Let $G$ be a pro-$p$ group and $d(G)$ its minimal number of generators. Then $NCC(G)\geq \frac{p^{d(G)-1}}{p-1}$.
In particular, if $G$ has finite NCC, then $G$ is finitely generated.
\end{Lemma}
\begin{proof} Let $\Phi(G)$ denote the Frattini subgroup of $G$. By \cite[Propostions~1.9~and~1.13]{DDMS}, $G/\Phi(G)$
is an elementary abelian $p$-group with $d(G)=d(G/\Phi(G))$. Hence 
$$\NCC(G)\geq \NCC(G/\Phi(G))=\frac{|G/\Phi(G)|-1}{p-1}=\frac{p^{d(G)}-1}{p-1}.\qquad\qedhere$$
\end{proof}

Next we relate NCC to the set of orders of elements. The following definition will only be introduced for discrete groups. The corresponding notion in the profinite case requires extra care, but also will not be needed; in fact, here the case of finite groups will be sufficient for our purposes.

\begin{Definition}\rm
Let $G$ be a non-trivial discrete group.
\begin{itemize}
\item[(a)] An integer $k>1$ will be called a {\it primitive element order} of $G$ if $G$ has a maximal cyclic subgroup of order $k$.
\item[(b)] An integer $k>1$ will be called a {\it maximal element order} of $G$ if $G$ has an element of order $k$, but has no element
whose order is a proper (finite) multiple of $k$.
\end{itemize}
We will denote the set of all primitive (resp. maximal) element orders of $G$ by $\PEO(G)$ (resp. $\MEO(G)$).
\end{Definition} 

\begin{Lemma}
\label{lem:orders1}
Let $G$ be a discrete group. Then $\MEO(G)$ is a subset of $\PEO(G)$. Moreover, $|\PEO(G)|\leq \CC(G,\Phi)$ for any $\Phi$.
\end{Lemma}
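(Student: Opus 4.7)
The plan is to prove the two claims separately, both by direct arguments.

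For $MEO(G)\subseteq PEO(G)$, I would take $k\in MEO(G)$ and an element $x\in G$ of order $k$, and verify that $\la x\ra$ is a maximal cyclic subgroup of $G$. Suppose for contradiction that $\la x\ra\subsetneq \la y\ra$ for some $y\in G$. The element $y$ cannot have infinite order, since any infinite cyclic group is torsion-free and hence could not contain the non-trivial torsion element $x$. So $y$ has finite order $n$, in which case $k\mid n$ and $n>k$, making $n$ a proper finite multiple of $k$ and contradicting $k\in MEO(G)$. Thus $\la x\ra$ is maximal cyclic of order $k$, so $k\in PEO(G)$.

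For $|PEO(G)|\leq \CC(G,\Phi)$, it suffices to show that for any finite subset $\{k_1,\ldots,k_n\}\subseteq PEO(G)$ of distinct primitive element orders one has $n\leq \CC(G,\Phi)$; the case $|PEO(G)|=\infty$ then follows by letting $n\to\infty$. I would fix such a collection together with a cyclic $\Phi$-cover $\{D_1,\ldots,D_m\}$ of $G$ with $m=\CC(G,\Phi)$, and for each $i$ a maximal cyclic subgroup $C_i=\la x_i\ra$ of order $k_i$. By the cover property, for each $i$ there exist $\phi_i\in \Phi$ and $j_i\in\{1,\ldots,m\}$ with $x_i\in \phi_i(D_{j_i})$, and hence $C_i\subseteq \phi_i(D_{j_i})$. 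Now $\phi_i(D_{j_i})$ is cyclic (being the image of the cyclic group $D_{j_i}$ under an automorphism) and $C_i$ is maximal cyclic, so the containment is forced to be an equality; in particular $|D_{j_i}|=|\phi_i(D_{j_i})|=k_i$. Since the $k_i$ are pairwise distinct, so are the $j_i$, giving $n\leq m$.

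No serious obstacle is expected: the only subtleties are to rule out the infinite-order case in the first part (using that a cyclic group of infinite order is torsion-free) and to observe in the second part that an automorphism of $G$ sends a cyclic subgroup to a cyclic subgroup of the same order, so that finiteness of the order $k_i$ transfers to $D_{j_i}$ and allows the distinctness argument to go through.
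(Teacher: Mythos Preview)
Your proof is correct and follows essentially the same approach as the paper's: both arguments observe that any maximal cyclic subgroup must coincide with some $\phi(C_i)$ from a minimal cyclic $\Phi$-cover, and that maximal cyclics of different orders give rise to distinct indices. The paper phrases this via the intermediate quantity $MC(G,\Phi)$ (the number of $\Phi$-orbits of maximal cyclic subgroups), but the content is the same.
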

\begin{proof}
The first assertion is clear. If $\{C_i\}$ is any cyclic $\Phi$-cover of $G$,
then for any maximal cyclic subgroup $C$ of $G$, the $\Phi$-orbit of $C$ must contain one of the subgroups $C_i$,
so $\MC(G,\Phi)\leq \CC(G,\Phi)$ where $\MC(G,\Phi)$ is the number of $\Phi$-orbits of maximal cyclic subgroups.
On the other hand, if $C$ and $C'$ are maximal cyclic subgroups of different orders, they must be in different orbits.
Thus, $|\PEO(G)|\leq \MC(G,\Phi)$, which proves the second assertion.
\end{proof}

The next result can be used, in particular, to show that if $G$ is a group with finite NCC, then a normal subgroup of $G$ cannot decompose as a direct product of too many non-abelian simple groups. We thank the referee for simplifying our original proof.
 
\begin{Lemma} 
\label{lem:dp}
Let $H$ be a discrete or profinite group and $\Phi$ a group acting on $H$ by automorphisms.
Suppose that there exists an integer $e>1$
and elements $h_1,\ldots, h_k$ of $H$ with the following properties:
\begin{itemize}
\item[(i)] Each $h_i$ has order $e$.
\item[(ii)] For any $i\neq j$ there is no $\phi\in\Phi$ such that $\phi(\la h_i\ra)=\la h_j\ra$.
\end{itemize}
Then $\CC(H,\Phi)\geq k$.
\end{Lemma}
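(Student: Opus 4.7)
The plan is to argue by pigeonhole, exploiting the fact that a (pro)cyclic group has at most one (closed) subgroup of each finite order. Suppose for contradiction that $\CC(H,\Phi) = n < k$ and fix a (pro)cyclic $\Phi$-cover $\{C_1,\ldots,C_n\}$ of $H$. For each $i \in \{1,\ldots,k\}$ I can choose, by definition of a cover, some $\phi_i \in \Phi$ and an index $j_i \in \{1,\ldots,n\}$ such that $h_i \in \phi_i(C_{j_i})$. Since $k > n$, the pigeonhole principle produces distinct indices $a, b$ with $j_a = j_b =: j$.

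I would then set $\psi := \phi_b \phi_a^{-1} \in \Phi$, so that $\psi(h_a)$ and $h_b$ both lie in the single (pro)cyclic subgroup $\phi_b(C_j)$. By hypothesis~(i) and the fact that $\psi$ is a group automorphism, both $\psi(h_a)$ and $h_b$ have order exactly $e$, hence each generates a cyclic subgroup of $\phi_b(C_j)$ of order $e$.

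The key structural step is the observation that in any (pro)cyclic group there is at most one (closed) subgroup of each finite order. This is immediate in the discrete case, and in the profinite case follows from the decomposition of a procyclic group as the direct product of its Sylow pro-$p$ subgroups, each of which is a quotient of $\dbZ_p$ and so contains a unique closed subgroup of every finite $p$-power order it admits. Applying this inside $\phi_b(C_j)$, the two order-$e$ subgroups $\la \psi(h_a) \ra$ and $\la h_b \ra$ must coincide, giving
\[
\phi_b \phi_a^{-1}(\la h_a \ra) \;=\; \la \psi(h_a) \ra \;=\; \la h_b \ra,
\]
which (being non-trivial) directly contradicts hypothesis~(ii) applied with $\phi = \phi_b \phi_a^{-1}$ to the pair $(a,b)$.

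I do not foresee any genuine obstacle. The pigeonhole step is routine, and the only real content is the uniqueness of finite subgroups in a (pro)cyclic group, which is standard. Both hypotheses of the lemma get used exactly once and in an essentially forced way: (i) guarantees that both $\psi(h_a)$ and $h_b$ fall into the unique order-$e$ subgroup of $\phi_b(C_j)$, and (ii) is then violated by the resulting equality of cyclic subgroups.
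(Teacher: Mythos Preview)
Your proof is correct. Both your argument and the paper's begin the same way, using pigeonhole to find distinct $h_a,h_b$ lying in $\Phi$-translates of the same (pro)cyclic subgroup, but they finish differently. You invoke the structural fact that a (pro)cyclic group has at most one (closed) subgroup of each finite order, forcing $\psi(\la h_a\ra)=\la h_b\ra$ directly. The paper instead writes $h_i\sim h^a$ and $h_j\sim h^b$ for integers $a,b$ which may be taken coprime, deduces $h_i^b\sim h_j^a$, and then uses~(ii) to conclude $h_i^b=h_j^a=1$, contradicting $\gcd(a,b)=1$ and $e>1$. Your route is slightly cleaner in that it handles the profinite case without any discussion of exponents in $\widehat{\dbZ}$ versus $\dbZ$; the paper's exponent manipulation is more hands-on but equally short.
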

\begin{proof} Suppose that $\CC(H,\Phi)<k$. Then there exists $i\neq j$ and $\phi\in\Phi$ such that
$\phi(\la h_i\ra)$ and $\la h_j\ra$ lie in the same cyclic or procyclic subgroup $C$ of $H$. Since
$\phi(\la h_i\ra)$ and $\la h_j\ra$ both have order $e$ by (i) and
cyclic or procyclic groups have at most one subgroup of any given finite order, it follows that
$\phi(\la h_i\ra)=\la h_j\ra$, contrary to (ii). 
\end{proof}

\begin{Corollary} 
\label{cor:dp}
Let $H=S_1\times \cdots \times S_k$ where $S_i$ are non-abelian finite simple groups (not necessarily distinct). Then
$\CC(H,\Phi)\geq k$ for any group $\Phi$ acting on $H$ by automorphisms.
\end{Corollary}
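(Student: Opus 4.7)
The plan is to derive the corollary from Lemma~\ref{lem:dp} by constructing involutions whose cyclic subgroups carry a permutation-invariant numerical label. First I would use the Feit--Thompson theorem to pick an involution $s_i \in S_i$ for each $i$, and then set
\[
h_i \;=\; (s_1, s_2, \ldots, s_i, 1, 1, \ldots, 1) \;\in\; H.
\]
Each $h_i$ is a product of commuting involutions supported on pairwise disjoint direct factors, hence itself an involution, which gives condition~(i) of Lemma~\ref{lem:dp} with $e = 2$.

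To verify condition~(ii), I would use the standard fact that every automorphism of $H$ permutes the direct factors $S_1, \ldots, S_k$, which holds because these factors are exactly the minimal normal subgroups of $H$: for any normal subgroup $N$ of $H$ and any $i$, the commutator $[N, S_i]$ is normal in $H$ and contained in $S_i$, so by non-abelian simplicity it equals either $\{1\}$ or $S_i$; a non-trivial minimal $N$ cannot centralize all of $H$ (which has trivial centre), so it must contain, and hence coincide with, some $S_i$. Therefore each $\phi \in \Phi$ induces a permutation $\sigma_\phi$ of $\{1, \ldots, k\}$ with $\phi(S_i) = S_{\sigma_\phi(i)}$, and the set of coordinates at which $\phi(h_i)$ is non-trivial is $\sigma_\phi(\{1, \ldots, i\})$, a set of cardinality exactly $i$.

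Now fix $i \neq j$ and suppose that some non-identity element $g$ lies in $\phi(\langle h_i \rangle) \cap \langle h_j \rangle$. Since both cyclic subgroups have order $2$, this forces $g = \phi(h_i) = h_j$, so the supports of $\phi(h_i)$ and $h_j$ must coincide; but they have cardinalities $i$ and $j$ respectively, a contradiction. Lemma~\ref{lem:dp} therefore yields $\CC(H, \Phi) \geq k$. The one delicate point of the plan is the choice of the $h_i$: the naive option $h_i = s_i$ (thinking of $s_i$ as sitting in the $i$-th factor) fails whenever several $S_i$ are isomorphic, because an automorphism swapping two isomorphic factors can send $\langle h_i \rangle$ onto $\langle h_j \rangle$; the cascading construction above sidesteps this by attaching to each $\langle h_i \rangle$ the permutation-invariant label equal to the number of non-trivial coordinates of $h_i$.
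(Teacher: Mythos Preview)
Your proof is correct and uses the identical construction $h_i = (s_1,\ldots,s_i,1,\ldots,1)$ and the same appeal to Lemma~\ref{lem:dp} as the paper. The only difference is the invariant used to verify condition~(ii): you track the size of the support of $h_i$ (which requires the structural fact that automorphisms of $H$ permute the simple factors), whereas the paper observes that the centralizers $C_H(h_1)\supsetneq C_H(h_2)\supsetneq\cdots\supsetneq C_H(h_k)$ form a strictly decreasing chain, so the $|C_H(h_i)|$ are pairwise distinct and automorphism-invariant. The paper's argument is slightly more elementary---it needs only that each $S_i$ has even order and trivial centre, not full non-abelian simplicity---but both verifications are short and valid.
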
 
\begin{proof} We will only use the fact that each $S_i$ is a finite group of even order and has trivial center.

Choose elements $s_i\in S_i$ of order $2$, and for each $1\leq i\leq k$ let $h_i=(s_1,s_2,\ldots, s_i,1,\ldots,1)$.
Since each $s_i$ is non-central in $S_i$, the sequence of centralizers $C(h_1)\supset C(h_2)\supset \cdots \supset C(h_k)$
is strictly decreasing. Hence the elements $\{h_i\}$ lie in different $\Phi$-orbits. Since $\{h_i\}$ have prime order (namely order $2$),
they satisfy the hypotheses of Lemma~\ref{lem:dp} and hence $\CC(H,\Phi)\geq k$.
\end{proof}

Before stating our last result of this section, we introduce one more definition.

\begin{Definition}\rm
Let $G$ be a discrete or profinite group. We will say that $G$ has {\it property (FMHFG)}\footnote{(FMHFG) stands for `finitely many homomorphisms to a finite group'.} 
if for any finite group $F$ there are only finitely many homomorphisms from $G$ to $F$.
\end{Definition}
\begin{Remark}\rm A discrete (resp. profinite) group $G$ has (FMHFG) if and only if for any $i\in\dbN$ it has finitely many subgroups (resp. open subgroups) of index $i$.
\end{Remark}

Clearly finitely generated groups have (FMHFG). A simple example of an infinitely generated group with (FMHFG) is the direct sum or product
of an infinite collection of finite groups of pairwise coprime orders.  

\begin{Lemma}
\label{lem:FMHFG}
Let $G$ be a discrete or profinite group with finite NCC. Then $G$ has (FMHFG).
\end{Lemma}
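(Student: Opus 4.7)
The plan is to argue by contradiction: assume $G$ has infinitely many distinct homomorphisms to a fixed finite group $F$ and deduce $\NCC(G)=\infty$. As a first reduction, $F$ has only finitely many subgroups, so by pigeonhole some $H\leq F$ is the image of infinitely many $\phi_n$. Moreover, for each normal subgroup $K\triangleleft G$ with $G/K\cong H$ there are at most $|\Aut(H)|$ homomorphisms $G\to F$ with kernel $K$ and image $H$, so there must be infinitely many pairwise distinct normal subgroups $K_n\triangleleft G$ with $G/K_n\cong H$. I may therefore replace the original data by surjections $\phi_n:G\twoheadrightarrow H$ with pairwise distinct kernels $K_n$.

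Next I would extract a further subsequence producing a strictly descending chain. Setting $L_n:=K_1\cap\cdots\cap K_n$, the quotient $G/L_n$ embeds in $H^n$ and is therefore finite, so only finitely many normal subgroups of $G$ contain $L_n$, and in particular only finitely many of the $K_m$ do. Passing to a subsequence I may thus assume $L_n\not\subseteq K_{n+1}$ for every $n$, and pick elements $g_n\in L_n\setminus K_{n+1}$; then $\phi_j(g_n)=1$ for all $j\leq n$ while $\phi_{n+1}(g_n)\neq 1$. Now I invoke the NCC hypothesis: let $C_1,\ldots,C_k$ be a (pro)cyclic cover of $G$ realising $k=\NCC(G)$. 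Each $g_n$ lies in a conjugate of some $C_{i(n)}$, and pigeonhole on $\{1,\ldots,k\}$ lets me assume, after passing to a further subsequence, that all $g_n$ lie in conjugates of a single $C=\overline{\la c\ra}$. Write $g_n=h_n c^{a_n}h_n^{-1}$ with $a_n\in\dbZ$ in the discrete case and $a_n\in\widehat{\dbZ}$ in the profinite case.

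The remainder is pure arithmetic. With $d_j:=\ord(\phi_j(c))$, conjugation inside $H$ preserves triviality, so the conditions on $g_n$ translate into $\phi_j(c)^{a_n}=1$ for $j\leq n$ and $\phi_{n+1}(c)^{a_n}\neq 1$, i.e. $d_j\mid a_n$ for all $j\leq n$ and $d_{n+1}\nmid a_n$. Consequently $d_{n+1}\nmid \mathrm{lcm}(d_1,\ldots,d_n)$ for every $n$, so the positive-integer sequence $\mathrm{lcm}(d_1,\ldots,d_n)$ is strictly increasing in $n$. But each $d_j$ divides $\exp(H)$, so this sequence is bounded above by $\exp(H)$, yielding the desired contradiction. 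The most delicate step is the construction of the descending chain $(L_n)$ together with witnessing elements $(g_n)$; once these are in place, the NCC pigeonhole and the ensuing divisibility argument run essentially automatically.
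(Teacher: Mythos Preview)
Your proof is correct, modulo harmless re-indexing sloppiness after the second pigeonhole pass (strictly speaking, after selecting a subsequence $(n_m)$ of indices for which all $g_{n_m}$ lie in conjugates of $C$, the conclusion is that $d_{n_m+1}\nmid \mathrm{lcm}(d_1,\ldots,d_{n_m})$ for every $m$, which still forces infinitely many strict increases in a sequence bounded by $\exp(H)$).

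Your route is genuinely different from the paper's. The paper argues directly: letting $K$ be the intersection of all kernels of maps $G\to F$, it embeds $G/K$ into a power $\prod_{i\in I}F$, takes procyclic subgroups $\la h_1\ra,\ldots,\la h_k\ra$ realizing $\NCC(G/K)$, and for each $h_j$ selects a finite set $I(h_j)\subseteq I$ of coordinates whose orders have $\mathrm{lcm}$ equal to $\ord(h_j)$; then any element of $G/K$ vanishing on $J=\bigcup I(h_j)$ is trivial, so $G/K$ embeds in the finite product $\prod_{j\in J}F$. Both arguments rest on the same arithmetic kernel---an element conjugate to $c^{a}$ vanishes under $\phi$ iff $\ord(\phi(c))\mid a$---but the paper packages this into a one-shot embedding that yields the quantitative statement ``$G/K$ is finite'' (indeed, of order at most $|F|^{|J|}$), whereas your contradiction/chain argument only gives the qualitative finiteness of $\Hom(G,F)$. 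On the other hand, your approach avoids introducing the auxiliary embedding into $\prod_I F$ and the coordinate bookkeeping, trading it for a clean descending-chain extraction; it is closer in spirit to the ``infinite sequence with controlled images'' arguments used elsewhere in the paper (e.g.\ Lemma~\ref{lem:ji2}).
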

\begin{Remark}\rm We will eventually prove that profinite and discrete residually finite groups with finite NCC are finitely generated. However, Lemma~\ref{lem:FMHFG} will be needed as an auxiliary tool in order to establish finite generation. 
\end{Remark}
\begin{proof}[Proof of Lemma~\ref{lem:FMHFG}] Fix a finite group $F$, and let $K$ be the intersection of the kernels of all homomorphisms from $G$ to $F$. Then any homomorphism
from $G$ to $F$ factors through $G/K$, so it suffices to prove that $G/K$ is finite.

Clearly $G/K$ embeds into a direct power $H=\prod\limits_{i\in I}F$ for some index set $I$. Since $H$ and hence $G/K$ is torsion, all cyclic subgroups of $H$ are closed, so there is no need to distinguish between the discrete and profinite cases. For any element $h\in H$ and $i\in I$
we denote by $h_i$ the $i^{\rm th}$ coordinate of $h$.

Take any $h\in H$, let $e=\ord(h)$, and let $I(h)$ be any finite subset of $I$ such that $LCM(\{\ord(h_i): i\in I(h)\})=e$. Then if some 
$x\in H$ lies in a conjugate of $\la h\ra$ and $x_i=1$ for all $i\in I(h)$, we must have $x=1$. 
Since $G/K$ has finite NCC, it lies in the union of conjugacy classes of finitely many cyclic subgroups $\la h_1\ra,\ldots, \la h_k\ra$.
If $J=\bigcup\limits_{i=1}^k I(h_k)$, then any $g\in G/K$ such that $g_j=1$ for all $j\in J$ must be trivial. But this means that $G/K$
embeds into the finite group $\prod\limits_{j\in J}F$, as desired.
\end{proof}

\subsection{NCC of a profinite group and its finite quotients}

In this last subsection we will explain why the existence of a family of non-cyclic finite $p$-groups with bounded NCC implies
the existence of a non-procyclic pro-$p$ group with finite NCC (see Claim~\ref{claim:finitefamily} below). But first we need
to establish the following standard lemma.

\begin{Lemma}
\label{NCC:profinite} Let $G=\varprojlim\limits_{i\in I} P_i$ where $\{P_i\}_{i\in I}$ is an inverse system of finite groups in which all the maps $P_i\to P_j$ are surjective. Then $$\NCC(G)=\sup \NCC(P_i).$$ In particular, for any profinite group $G$ we have
$\NCC(G)=\sup \NCC(P)$ where $P$ ranges over all finite quotients of $G$.
\end{Lemma}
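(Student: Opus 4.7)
One direction is immediate: each $P_i$ is a continuous quotient of $G$, so Lemma~\ref{CC:subgroup}(ii) gives $\NCC(P_i)\leq \NCC(G)$ for every $i$, hence $\sup_i \NCC(P_i)\leq \NCC(G)$. For the reverse inequality I may assume $k:=\sup_i\NCC(P_i)$ is finite (otherwise nothing remains), and I will build a procyclic normal cover of $G$ of size $k$ by two successive compactness arguments on the inverse system $\{P_i\}_{i\in I}$ (which is indexed by a directed set).

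First I assemble compatible covers of the finite layers. For each $i\in I$ let $T_i\subseteq P_i^k$ consist of the $k$-tuples $(x_1,\dots,x_k)$ whose cyclic subgroups $\langle x_l\rangle$, together with all their $P_i$-conjugates, cover $P_i$ (repetitions are allowed, so $T_i\neq\emptyset$ because $\NCC(P_i)\leq k$). Here the surjectivity of each bonding map $\pi_{ij}\colon P_i\to P_j$ is essential: any $y\in P_j$ lifts to some $\tilde y\in P_i$, which lies in some $P_i$-conjugate of $\langle x_l\rangle$, whose image is a $P_j$-conjugate of $\langle \pi_{ij}(x_l)\rangle$. Hence the induced maps $P_i^k\to P_j^k$ send $T_i$ into $T_j$, making $\{T_i\}$ into an inverse system of nonempty finite sets over a directed index set, and its inverse limit is nonempty by the standard compactness fact. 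Pick $(X_1,\dots,X_k)\in\varprojlim_i T_i\subseteq G^k$.

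Next I show that the procyclic subgroups $C_l:=\overline{\langle X_l\rangle}$, $1\leq l\leq k$, together with their $G$-conjugates cover $G$. Fix $y\in G$ and write $y_i$, $X_l^{(i)}$ for images in $P_i$. Define
\[
W_i=\{(j,g)\in\{1,\dots,k\}\times P_i : y_i\in\langle (X_j^{(i)})^g\rangle\},
\]
which is nonempty by the defining property of $T_i$, finite, and stable under the obvious transition maps (identity on the first coordinate, $\pi_{ij}$ on the second). A second application of compactness for inverse limits of nonempty finite sets produces $(j_0,g_0)\in\varprojlim_i W_i$: a \emph{single} index $j_0\in\{1,\dots,k\}$ (since $\{1,\dots,k\}$ is discrete, the first coordinate must be constant throughout the system) and a consistent element $g_0\in G$ with $y_i\in\langle (X_{j_0}^{g_0})^{(i)}\rangle$ for every $i$. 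The image of the closed subgroup $C_{j_0}^{g_0}=\overline{\langle X_{j_0}^{g_0}\rangle}$ in $P_i$ is exactly $\langle (X_{j_0}^{g_0})^{(i)}\rangle$, so $y$ lies in $\pi_i^{-1}(\pi_i(C_{j_0}^{g_0}))$ for all $i$. Invoking the standard identification of a closed subgroup $C$ of a profinite group with $\bigcap_i \pi_i^{-1}(\pi_i(C))$ then yields $y\in C_{j_0}^{g_0}$, as required. The ``in particular'' clause follows because any profinite group is the inverse limit of its finite quotients with surjective bonding maps, obtained from its open normal subgroups.

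The only delicate point is the bookkeeping behind the two compactness arguments: one must genuinely check (using the surjectivity hypothesis) that the bonding maps preserve the finite sets $T_i$ and $W_i$, and the concluding step relies on the standard equality $C=\bigcap_i \pi_i^{-1}(\pi_i(C))$ for closed subgroups of a profinite group. Neither ingredient is subtle, but each needs to be stated cleanly, and the observation that the first coordinate of the element of $\varprojlim W_i$ is forced to be constant is what ties the argument together.
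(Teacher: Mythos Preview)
Your proof is correct and follows essentially the same approach as the paper's. Both arguments pass to the inverse system of finite sets of $k$-tuples that witness a cyclic cover and invoke the nonemptiness of inverse limits of nonempty finite sets; the only difference is in the final verification: the paper observes that the set $T=\bigcup_{l}\overline{\la X_l\ra}^G$ is closed (as a finite union of continuous images of compact sets) and projects onto each $P_i$, hence equals $G$, whereas you run a second compactness argument on the sets $W_i$ to locate the covering conjugate for each $y$---this is a slightly more explicit unpacking of the same compactness principle.
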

\begin{proof} By \cite[Proposition~1.4]{DDMS}, the inverse limit of a system of compact (in particular, finite) sets $P_i$ is always non-empty. Moreover, the proof shows that if all the maps $P_i\to P_j$ are surjective, then so is the induced map $\varprojlim\limits_{i\in I} P_i\to P_j$. Thus, in our setting $\NCC(G)\geq \NCC(P_i)$ for each $i$, and so $\NCC(G)\geq \sup \NCC(P_i)$.

To prove the reverse inequality $\NCC(G)\leq \sup \NCC(P_i)$ we just need to show that if $k\in\dbN$ is such that $\NCC(P_i)\leq k$ for all
$i$, then $\NCC(G)\leq k$. Take any such $k$, and for each $i\in I$ let $S_{i}$ be the set of all sequences $(g_i(1),\ldots, g_i(k))\in P_i^k$
such that the conjugacy classes of the cyclic subgroups $\la g_i(1)\ra,\ldots, \la g_i(k)\ra$ cover $P_i$. By the choice of $k$ each $S_i$ is non-empty. Moreover, the sets $\{S_i\}$ form an inverse system with the maps $S_i\to S_j$ defined componentwise. 
By \cite[Proposition~1.4]{DDMS}, the inverse limit $S=\varprojlim\limits_{i\in I} S_i$ is non-empty; on the other hand, we can naturally
identify $S$ with a subset of $G^k$. Let $(g(1),\ldots, g(k))$ be any element of $S$, and let $T=\bigcup\limits_{i=1}^k 
\overline{\la g(i)\ra}^G$ (where $A^G$ denotes the normal closure of $G$). Then $T$ is a closed
subset of $G=\varprojlim\limits_{i\in I} P_i$ which projects onto each $P_i$, and from this it is easy to deduce that $T=G$.
Thus $\NCC(G)\leq k$, as desired.
\end{proof}

\begin{Claim}
\label{claim:finitefamily}
Suppose that for some $k$ there exists an infinite sequence of noncyclic finite $p$-groups $\{P_i\}$ with $\NCC(P_i)\leq k$ for all $i$. Then there exists an infinite non-procyclic pro-$p$ group $G$ with $\NCC(G)\leq k$. 
Moreover, if $d(P_i)=d$ for all $i$, we can assume that $d(G)=d$.
\end{Claim}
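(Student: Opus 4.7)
The plan is to construct $G$ as the inverse limit along an infinite branch of a K\"onig's lemma tree built from Frattini-refined quotient chains of the $P_i$. First, passing to subsequences: only finitely many isomorphism classes of finite $p$-groups have a given order, so we may assume $|P_i|\to\infty$ strictly; and Lemma~\ref{CC:subgroup}(ii) applied to $P_i\twoheadrightarrow P_i/\Phi(P_i)\cong(\dbZ/p\dbZ)^{d(P_i)}$ gives $k\geq\NCC(P_i)\geq\frac{p^{d(P_i)}-1}{p-1}$, so $d(P_i)$ is uniformly bounded, and after a further subsequence $d(P_i)=r$ for a fixed $r$, with $r\geq 2$ because each $P_i$ is non-cyclic (under the second hypothesis we already have $r=d$).

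For each $i$, I would refine $\{1\}\subset\Phi(P_i)$ to a $P_i$-invariant chief series $\Phi(P_i)=M_{i,0}\rhd M_{i,1}\rhd\cdots\rhd M_{i,m_i}=\{1\}$ with each $M_{i,j-1}/M_{i,j}$ central in $P_i/M_{i,j}$ of order $p$, and set $Q_{i,j}=P_i/M_{i,j}$. Then $Q_{i,0}\cong(\dbZ/p\dbZ)^r$, $Q_{i,m_i}=P_i$, $|Q_{i,j}|=p^{r+j}$, and crucially $d(Q_{i,j})=r$ throughout because the kernels stay inside $\Phi(P_i)$; Lemma~\ref{CC:subgroup}(ii) then gives $\NCC(Q_{i,j})\leq\NCC(P_i)\leq k$. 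Now build the tree $\mathcal{T}$ whose nodes at level $n$ are isomorphism classes of chains $(Q_0\twoheadleftarrow Q_1\twoheadleftarrow\cdots\twoheadleftarrow Q_n)$ with $Q_0\cong(\dbZ/p\dbZ)^r$, $|Q_j|=p^{r+j}$, each arrow a quotient by a central subgroup of order $p$, and such that the chain is a prefix of $(Q_{i,0},\ldots,Q_{i,m_i})$ for some $i$. Each level of $\mathcal{T}$ is finite, while $m_i\to\infty$ forces nodes at arbitrary depth, so K\"onig's lemma produces an infinite branch $(Q_0,Q_1,Q_2,\ldots)$.

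Set $G=\varprojlim Q_n$. The bonding maps are surjective by construction, so Lemma~\ref{NCC:profinite} yields $\NCC(G)=\sup_n\NCC(Q_n)\leq k$. Since $Q_n/\Phi(Q_n)\cong(\dbZ/p\dbZ)^r$ for every $n$ and the transition maps on Frattini quotients are isomorphisms, one gets $G/\Phi(G)\cong(\dbZ/p\dbZ)^r$ and hence $d(G)=r$ by the Burnside basis theorem; in particular $G$ is infinite and non-procyclic, and $d(G)=d$ under the hypothesis of the second assertion. The chief technical point — and the reason the Frattini refinement is essential — is to prevent the infinite branch from collapsing to a cyclic tower (which would only produce $G\cong\dbZ_p$): insisting that all kernels $M_{i,j}$ lie in $\Phi(P_i)$ holds $d(Q_{i,j})=r\geq 2$ uniformly along every branch of $\mathcal{T}$.
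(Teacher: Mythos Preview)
Your proof is correct and follows essentially the same approach as the paper: bound $d(P_i)$ via $\NCC(P_i)\geq\frac{p^{d(P_i)}-1}{p-1}$, reduce to fixed $d(P_i)=r\geq 2$, build a locally finite rooted tree whose edges correspond to quotients by a central order-$p$ subgroup contained in the Frattini subgroup, extract an infinite branch by K\"onig's lemma, and take the inverse limit, invoking Lemma~\ref{NCC:profinite} for the NCC bound and the Burnside basis theorem for $d(G)=r$. The only cosmetic difference is that the paper takes as vertices the isomorphism classes of groups themselves (forming the graph $\Gamma_{k,d}(p)$) rather than chains of groups, but the underlying combinatorics and the role of the Frattini condition are identical.
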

\begin{proof} 
First observe that if $P$ is a finite $p$-group and $d=d(G)$, then $P/\Phi(P)\cong (\dbZ/p\dbZ)^d$ 
whence $\NCC(P)\geq \NCC((\dbZ/p\dbZ)^d)=\frac{p^d-1}{p-1}$. Hence for any family of finite $p$-groups with bounded NCC, 
the sequence $\{d(P_i)\}$ is also bounded. Thus, it suffices to prove Claim~\ref{claim:finitefamily} assuming that 
there exists $d\in\dbN$ such that $d(P_i)=d$ for all $i$.

Consider the following oriented graph $\Gamma_{k,d}(p)$. The vertices of $\Gamma_{k,d}(p)$ are (isomorphism classes of) 
finite $p$-groups $P$ with $d(P)=d$ and $\NCC(P)\leq k$ (thus by our hypothesis $\Gamma_{k,d}(p)$ is infinite). There is an oriented edge from $P$
to $Q$ if and only if $Q\cong P/Z$ where $|Z|=p$ and $Z\subseteq \Phi(P)$.
 
Any finite $p$-group $P$ with $d(P)=d$ and $P\not\cong (\dbZ/p\dbZ)^d$ has a central subgroup $Z$ of order $p$ lying in $\Phi(P)$.
Therefore, for any such $P$ there is a directed path from $P$ to $(\dbZ/p\dbZ)^d$ in $\Gamma_{k,d}(p)$. In particular
$\Gamma_{k,d}(p)$ is connected and thus contains an infinite path $Q_1\leftarrow Q_2\leftarrow Q_3\leftarrow\cdots$.
Let $G=\varprojlim Q_i$. Since $d(Q_i)=d$ for all $i$, we have $d(G)=d$. Also by Lemma~\ref{NCC:profinite}, 
$\NCC(G)=\sup \{\NCC(Q_i)\}$, so $\NCC(G)\leq k$, as desired.  
\end{proof}

\section{Reduction to the residually solvable case}

Recall that by $\Nil$ and $\Sol$ we denote the classes of finite nilpotent and finite solvable groups, respectively.
The goal of this section is to establish the first of the three parts in the proof of Theorem~\ref{thm:mainprofinite} (recall that the three parts were introduced in \S~\ref{sec:outline}). 

\begin{Theorem} 
\label{reduction:ressolvable}
Let $G$ be a profinite (resp. a discrete residually finite) group, and assume that $\NCC(G)<\infty$. Then 
$G$ is virtually pro-$\Sol$ (resp. virtually residually-$\Sol$).
\end{Theorem}

Theorem~\ref{reduction:ressolvable} in the discrete case immediately follows from its profinite analogue. 
Indeed, let $G$ be a discrete residually finite group with finite NCC. Then its profinite completion $\widehat G$ is a profinite group with finite NCC.
By the profinite part of Theorem~\ref{reduction:ressolvable}, $\widehat G$ has an open pro-$\Sol$ subgroup $U$, and so $G\cap U$ is a finite index residually-$\Sol$ subgroup of $G$.

Thus, it suffices to prove Theorem~\ref{reduction:ressolvable} for a profinite group $G$. This will be done by analyzing the action of $G$ on the factors of its chief series defined as follows.

\begin{Definition}\rm
Let $G$ be a profinite group. A descending chain of open normal subgroups $G=G_1\supseteq G_2\supseteq\cdots$ will be called a {\it chief series}  of $G$ if the following hold:
\begin{itemize}
\item[(i)] $\{G_i\}$ is a base of neighborhoods for the topology on $G$. Since $G$ is profinite,
this is equivalent to requiring that $\cap G_i=\{1\}$.
\item[(ii)] $G$ does not have any normal subgroups lying strictly between $G_i$ and $G_{i+1}$. 
\end{itemize} 
\end{Definition}

Note that a profinite group $G$ has a series satisfying (i) if and only if it is countably based.
Moreover, if we start with any series $\{G_i\}$ satisfying (i), then (ii) can always be achieved
by refining the series (since each $G_i/G_{i+1}$ is finite and hence has a chief series in the usual sense).
Recall that by Lemma~\ref{lem:FMHFG} groups with finite NCC have property (FMHFG) which is equivalent to having
finitely many open subgroups of any given index $i$. Thus, groups with finite NCC are countably based and
hence admit a chief series.

\begin{Observation} 
\label{obs:chief}
Let $G$ be a countably based profinite group. The following hold:
\begin{itemize}
\item[(a)] $G$ is pro-$\Nil$ if and only if it admits a chief series
$\{G_i\}$ such that $G$ acts trivially on each quotient $G_i/G_{i+1}$. 
\item[(b)] $G$ is pro-$\Sol$ if and only if it admits a chief series
$\{G_i\}$ such that each quotient $G_i/G_{i+1}$ is abelian.
\end{itemize}
\end{Observation}
\begin{Remark}\rm It is also easy to show that $G$ is pro-$\Nil$ (resp. pro-$\Sol$) if and only if
every chief series of $G$ satisfies the extra condition in (a) (resp. (b)).
\end{Remark}

For the rest of this section we fix a profinite group $G$ with (FMHFG) and also fix a chief series $\{G_i\}$ of $G$. For each $i$ let $Q_i=G_i/G_{i+1}$. We know that $Q_i\cong S_i^{n_i}$ for some finite simple group $S_i$ and $n_i\in\dbN$.

\begin{Lemma}
\label{lem:bounded} Whenever $S_i$ is non-abelian we have $n_i\leq \NCC(G)$.
\end{Lemma}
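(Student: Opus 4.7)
The plan is to combine the hereditary properties of $\CC(\cdot,\Phi)$ from Lemma~\ref{CC:subgroup} with the lower bound for direct products of non-abelian simple groups from Corollary~\ref{cor:dp}, applied to the chief factor $Q_i$ viewed as a $G$-module.

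First I would observe that since $G_i$ is a (closed) normal subgroup of $G$, we may consider $\CC(G_i,G)$ with respect to the conjugation action of $G$ on $G_i$. By Lemma~\ref{CC:subgroup}(i) this quantity is bounded above by $\NCC(G)=\CC(G,G)$. Next, because $G_{i+1}$ is normal in $G$ and contained in $G_i$, the conjugation action of $G$ on $G_i$ descends to an action on $Q_i=G_i/G_{i+1}$, and Lemma~\ref{CC:subgroup}(ii) gives $\CC(Q_i,G)\leq \CC(G_i,G)$. Chaining these two bounds yields
\[
\CC(Q_i,G)\;\leq\;\NCC(G).
\]

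Now assume $S_i$ is non-abelian, so $Q_i\cong S_i^{n_i}$ is a direct product of $n_i$ non-abelian finite simple groups. Applying Corollary~\ref{cor:dp} with $H=Q_i$, $k=n_i$ and $\Phi=G$ (acting through the chosen conjugation action) gives $\CC(Q_i,G)\geq n_i$. Combining with the inequality above produces $n_i\leq \NCC(G)$, which is the desired bound.

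The argument is essentially a one-step application of previously established machinery, so there is no real obstacle; the only thing worth being careful about is verifying that the two uses of Lemma~\ref{CC:subgroup} chain correctly, i.e.\ that the $G$-action on $G_i$ is exactly the restriction of the conjugation action used to define $\NCC(G)$, and that this action descends to $Q_i$ because $G_{i+1}$ is $G$-invariant (automatic since it is normal in $G$).
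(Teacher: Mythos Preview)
Your proof is correct and follows the same approach as the paper: bound $\CC(Q_i,G)$ from above by $\NCC(G)$ via Lemma~\ref{CC:subgroup}(i)(ii), and from below by $n_i$ via Corollary~\ref{cor:dp}. You have simply spelled out the chaining through $G_i$ a bit more explicitly than the paper does.
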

\begin{proof}
By Corollary~\ref{cor:dp} we have $\CC(Q_i,G)\geq n_i$ (where $\CC$ is with respect to the conjugation action of $G$ on $Q_i$),
and by Lemma~\ref{CC:subgroup}(i)(ii) $\CC(Q_i,G)\leq \NCC(G)$. 
\end{proof}

\begin{Lemma} 
\label{lem:repetition}
For any non-abelian simple group $S$ there are only finitely many $i$ such that $S_i=S$. 
\end{Lemma}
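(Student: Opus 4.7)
The plan is to argue by contradiction: suppose the non-abelian simple group $S$ equals $S_{i_j}$ along a strictly increasing sequence $i_1<i_2<\cdots$, and derive a contradiction from property (FMHFG). For each $j$, let $N_j := G_{i_j}/G_{i_j+1} \cong S^{n_{i_j}}$, let $C_j$ denote the centralizer of $N_j$ in $G/G_{i_j+1}$, and let $C_j^*$ be the preimage of $C_j$ under the projection $\pi_j \colon G \to G/G_{i_j+1}$. Since $N_j$ is a minimal normal subgroup with trivial center, the conjugation action of $G$ on $N_j$ induces an embedding $G/C_j^* \hookrightarrow \Aut(N_j) \cong \Aut(S) \wr \Sym(n_{i_j})$; moreover, the same centerless property gives $G_{i_j}\cap C_j^* = G_{i_j+1}$, so $G_{i_j}C_j^*/C_j^* \cong N_j$ sits as a nontrivial subgroup of $G/C_j^*$.

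By Lemma~\ref{lem:bounded} each $n_{i_j}$ is at most $\NCC(G)$, so the index $[G:C_j^*]$ is bounded by a constant $B$ depending only on $S$ and $\NCC(G)$. Property (FMHFG) is equivalent to the statement that $G$ has only finitely many open normal subgroups of each prescribed index (each such subgroup being the kernel of a homomorphism into a symmetric group on its coset set). Hence only finitely many distinct subgroups occur among $\{C_j^*\}_j$, and by pigeonhole some common value $C$ is taken by $C_j^*$ for infinitely many $j$. Passing to this subsequence yields infinitely many nontrivial subgroups $G_{i_j}C/C$ of the fixed finite group $G/C$.

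The crux is then to show that these subgroups are pairwise distinct, which will contradict the finiteness of $G/C$. Suppose instead that $G_{i_j}C = G_{i_k}C$ for some $j<k$ in the relevant subsequence. Then
\[
G_{i_j} \;=\; G_{i_k}\bigl(G_{i_j}\cap C\bigr) \;=\; G_{i_k}\,G_{i_j+1},
\]
using $G_{i_j}\cap C = G_{i_j+1}$ established above. Since $i_k \geq i_j+1$, the nested chief series yields $G_{i_k}\subseteq G_{i_j+1}$, so $G_{i_j}=G_{i_j+1}$, contradicting $N_j \neq 1$. I expect the main obstacle to be largely bookkeeping: one must carefully pull the centralizers in the finite quotients back to $G$ and distinguish equalities in $G$ from equalities in $G/G_{i_j+1}$. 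The decisive leverage comes from the non-abelian hypothesis on $S$, which forces $N_j\cap C_j = 1$ and thereby guarantees that each $G_{i_j}C/C$ is a faithful copy of $N_j$ inside the bounded-order group $G/C$; without this, the subgroups could collapse and the pigeonhole step would fail.
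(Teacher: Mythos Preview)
Your argument is correct. The ingredients are the same as the paper's---the bound $n_i\le \NCC(G)$ from Lemma~\ref{lem:bounded}, property (FMHFG), and the fact that $S$ non-abelian forces $G_{i_j}\cap C_j^*=G_{i_j+1}$---but the execution differs. The paper bypasses the pigeonhole and distinctness steps: it simply lets $H$ be the intersection of the kernels of \emph{all} homomorphisms $G\to \Aut(S^{\NCC(G)})$, which is open by (FMHFG), and observes that $H$ acts trivially on every $Q_i$ with $S_i=S$; since $G_i$ itself acts nontrivially on $Q_i$ (as $S$ is non-abelian), $G_i\not\subseteq H$ for all such $i$, whereas $H\supseteq G_j$ for some $j$. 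Your route reaches the same endpoint but with an extra layer: once you have a common $C=C_j^*$ for infinitely many $j$, you could already finish exactly as the paper does, noting $G_{i_j}\not\subseteq C$ while $C$ is open---your distinctness-of-$G_{i_j}C/C$ argument is a correct but slightly longer way to extract the same contradiction.
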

\begin{proof} Fix $S$. Since $G$ has (FMHFG), there are only finitely many homomorphisms from $G$ to the finite group $\Aut(S^{\NCC(G)})$.
Let $H$ be the intersection of the kernels of these homomorphisms. Then $H$ is an open 
subgroup of $G$. By Lemma~\ref{lem:bounded}, for any $i$ with $S_i=S$, the group $\Aut(Q_i)$ embeds into
$\Aut(S^{\NCC(G)})$, whence $H$ acts trivially on $Q_i$ and thus cannot contain $G_i$ for any such $i$ (since $G_i$ acts non-trivially on $Q_i$
as $S_i$ is non-abelian). On the other hand, since $H$ is open, it must contain $G_j$ for some $j$, so we can only have $S_i=S$
for $i<j$. 
\end{proof}

We are now ready to prove Theorem~\ref{reduction:ressolvable}. In view of Observation~\ref{obs:chief}(b), the result can be reformulated
as follows.

\begin{Proposition} 
\label{reduction:ressolvable0}
Assume that $\NCC(G)<\infty$. Then $S_i$ is abelian for all sufficiently large $i$ and therefore $G$ is virtually pro-$\Sol$.
\end{Proposition}
\begin{proof}
Let $I$ be the set of all $i$ such that $S_i$ is non-abelian. Our goal is to show that $I$ is finite.
First we want to reduce the problem to the case where $n_i=1$ for all $i\in I$.

For each $i$ the conjugation action of $G$ on $Q_i=S_i^{n_i}$
induces a homomorphism $\pi_i:G\to Sym(n_i)$. Since $n_i\leq \NCC(G)$ by Lemma~\ref{lem:bounded} and $G$ has (FMHFG) by Lemma~\ref{lem:FMHFG}, 
there are only finitely many such homomorphisms.
If $H$ is the intersection of the kernels of these homomorphisms, then $H$ is an open subgroup of $G$ which preserves each direct factor
of each $Q_i$. Thus, $H$ has a chief series (obtained by a refinement of the series $\{H\cap G_i\}$) where all non-abelian chief factors are
simple. 

Thus, replacing $G$ by $H$ (which also has finite NCC) we can assume that $n_i=1$ for $i\in I$, as desired. Under this extra assumption, for each $i\in I$ we have a homomorphism
$\pi_i:G\to \Aut(S_i)$. 

For any finite simple group $S$, the outer automorphism group $\Out(S)$ is solvable of derived length at most $3$.
This follows from the classification of finite simple groups and the explicit description of $\Out(S)$ for every finite simple group $S$ -- see \cite[Theorem~2.5.12~and~7.1.1(a)]{GLS}. 
Thus, if  $K=G^{(3)}$ is the third (closed) derived subgroup of $G$,
then $\pi_i(K)\subseteq \Inn(S_i)$ for all $i\in I$. In fact, we have $\pi_i(K)= \Inn(S_i)$ for all $i\in I$. Indeed, $\pi_i(G)$ contains $\pi_i(S_i)=\Inn(S_i)$ and hence
$\pi_i(K)$ contains $\Inn(S_i^{(3)})=\Inn(S_i)$ (since $S_i$ is perfect).
\skv

Identifying $\Inn(S_i)$ with $S_i$, we can reformulate the conclusion of the previous paragraph as follows. For every $i\in I$
there exists an epimorphism $\phi_i:K\to S_i$ which is $G$-equivariant with respect to the action of $G$ on $S_i$ given by $\pi_i$
and the conjugation action of $G$ on $K$. 

Now take any finite subset $J\subseteq I$ such that the groups $\{S_j\}_{j\in J}$ are pairwise non-isomorphic
and consider the diagonal map
$\phi:K\to\prod\limits_{j\in J}S_j$, which is also $G$-equivariant. By construction $\phi(K)$ surjects onto each direct factor $S_j$,
and since $\{S_j\}$ are pairwise non-isomorphic non-abelian finite simple groups, $\phi$ is surjective. 
Hence $\NCC(G)\geq \CC(K,G)\geq \CC(\prod\limits_{j\in J}S_j,G)\geq |J|$ where the first inequality holds by 
Lemma~\ref{CC:subgroup}(i), the second is immediate from the $G$-equivariance of $\phi$ and the third one holds by
Corollary~\ref{cor:dp}. Since $\NCC(G)<\infty$, we proved that the collection $\{S_i\}_{i\in I}$ contains only finitely
many pairwise non-isomorphic groups. Combined with Lemma~\ref{lem:repetition}, this implies that
$I$ is finite, as desired.
\end{proof}

\section{Reduction to the residually nilpotent case}

{\bf Notation:} Given a discrete group $G$ we will denote by $\{G^{(i)}\}_{i=0}^{\infty}$ its derived series, that is, define the subgroups $G^{(i)}$ inductively by $G^{(0)}=G$ and $G^{(i)}=[G^{(i-1)},G^{(i-1)}]$ for $i\geq 1$.
If $G$ is profinite, $\{G^{(i)}\}$ will denote the closed derived series, that is,
$G^{(i)}=\overline{[G^{(i-1)},G^{(i-1)}]}$ for $i\geq 1$.

In this section we will complete the second part of the proof of Theorem~\ref{thm:mainprofinite} by establishing the following result. 

\begin{Theorem}
\label{reduction:resnilpotent}
Let $G$ be a pro-$\Sol$ (resp. a discrete residually-$\Sol$) group, and assume that $\NCC(G)<\infty$. Then 
there exists $k\in\dbN$ such that $G^{(k)}$ is pro-$\Nil$ (resp. residually-$\Nil$).
\end{Theorem}

Similarly to Theorem~\ref{reduction:ressolvable}, it suffices to prove
Theorem~\ref{reduction:resnilpotent} for pro-$\Sol$ groups.

The third and final
part of the proof of Theorem~\ref{thm:mainprofinite} is fairly long and will be postponed till \S~10. However, Theorems~\ref{reduction:ressolvable}~and~\ref{reduction:resnilpotent} and Lemma~\ref{lem:Z} are sufficient to deduce the counterpart of Theorem~\ref{thm:mainprofinite} for finitely generated discrete residually finite groups:

\begin{Corollary}
\label{reduction:discretefg}
Let $G$ be a finitely generated discrete residually finite group with finite NCC. Then $G$ is virtually residually-$\Nil$. 
\end{Corollary}
\begin{proof}
By Theorems~\ref{reduction:ressolvable}~and~\ref{reduction:resnilpotent},
$G$ has a finite index subgroup $U$ such that $U^{(k)}$ is residually-$\Nil$ for some $k$. If $G$ is virtually cyclic, there is nothing to prove. If $G$ is not virtually cyclic, applying Lemma~\ref{lem:Z} $k$ times we deduce
that $U^{(k)}$ has finite index in $G$, which finishes the proof.
\end{proof}

We now begin the proof of Theorem~\ref{reduction:resnilpotent}. For the rest of the section we fix a pro-$\Sol$ group $G$ with finite NCC. By Observation~\ref{obs:chief}(b), $G$ admits a chief series $\{G_i\}$ such that all the quotients $Q_i=G_i/G_{i+1}$ are abelian. We will also fix such a chief series. For each $i$ we have $Q_i\cong \dbF_{p_i}^{n_i}$ for some prime $p_i$ and $n_i\in\dbN$.

We start by reducing Theorem~\ref{reduction:resnilpotent} to a certain result on solvable subgroups of linear groups over finite fields
(see Proposition~\ref{prop:solvlinear} below).

For each $i$ we can think of $Q_i$ as a finite-dimensional vector space over $\dbF_{p_i}$. To emphasize this point of view we will write $\GL(Q_i)$
instead of $\Aut(Q_i)$. Let $T_i$ denote the image of $G$ in $\GL(Q_i)$. Note that each $T_i$ must be solvable.
To prove Theorem~\ref{reduction:resnilpotent} it suffices to show that the derived lengths of the groups $T_i$ are bounded
by some $k\in\dbN$ (in fact, we will explicitly bound $k$ in terms of $\NCC(G)$). Indeed, if this is true, then
$G^{(k)}$ acts trivially on all chief factors $Q_i=G_i/G_{i+1}$ and hence also on their subgroups $(G_i\cap G^{(k)})/(G_{i+1}\cap G^{(k)})$
as well as on the chief factors of any chief series of $G^{(k)}$ refining $\{G_i\cap G^{(k)}\}_{i=1}^{\infty}$.
Hence $G^{(k)}$ must be pro-$\Nil$ by Observation~\ref{obs:chief}(a).

For each $i$ we have $\NCC(T_i)\leq \NCC(G)$. On the other hand, if $C$ is the conjugacy class of a cyclic subgroup of $G/G_{i+1}$, then the intersection
of $C$ with $Q_i$ is either trivial or is the orbit of a $1$-dimensional subspace of $Q_i$ under the action of $T_i$. Thus the action of $T_i$
on the set of $1$-dimensional subspaces of $Q_i$ has at most $\NCC(G)$ orbits. 

Let $T_i'$ be the subgroup of $\GL(Q_i)$ generated by $T_i$ and the scalar matrices. Then $T_i'$ is also solvable with $\ell(T_i)\leq \ell(T_i')$
where $\ell(\cdot)$ denotes the derived length (in fact, $\ell(T_i)= \ell(T_i')$ unless $T_i$ is the trivial group),
and the action of $T_i'$ on the set of nonzero elements of $Q_i$ has the same number of orbits as the action of $T_i$ on $1$-dimensional subspaces.
Thus, if we bound the derived length of $T_i'$ in terms of the number of orbits of its action on $Q_i\setminus\{0\}$, we will be done.
More precisely, we are now reduced to proving the following result: 

\begin{Proposition}
\label{prop:solvlinear}
Let $H$ be a solvable subgroup of $\GL_n(\dbF_p)$ for some prime $p$. Consider $H$ as an abstract group acting on $\dbF_p^n$,
and let $r$ be the number of orbits of this action. Then the derived length of $H$ is bounded above by $f(r)$ for some absolute function $f$ (independent of $p$ and $n$).  
\end{Proposition}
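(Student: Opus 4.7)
The plan is to prove Proposition~\ref{prop:solvlinear} by reducing to the irreducible case via an $H$-invariant composition series of $V$, and then to treat the irreducible case by induction on $r$ using Clifford theory.

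First I would fix an $H$-invariant composition series $0 = V_0 \subset V_1 \subset \cdots \subset V_m = V$ of $\dbF_p[H]$-modules with simple quotients $W_i := V_i/V_{i-1}$. A pigeonhole argument bounds $m$: the subsets $\{0\}$ and $V_i\setminus V_{i-1}$ for $i=1,\ldots,m$ partition $V$, and each non-empty stratum $V_i\setminus V_{i-1}$ contains at least one $H$-orbit (it projects onto a nonzero orbit of the nonzero module $W_i$), giving $r\geq m+1$. Let $K := \bigcap_{i=1}^m \ker(H\to \GL(W_i))$. In a basis adapted to the flag, $K$ sits inside the unipotent radical of the parabolic of $\GL_n(\dbF_p)$ stabilizing the flag and acting trivially on each quotient; this radical is block upper unitriangular with $m$ diagonal identity blocks, of nilpotency class at most $m-1$, and hence of derived length at most $\lceil \log_2 m \rceil$, so $\ell(K)\leq \log_2 r$. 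Meanwhile $H/K$ embeds into $\prod_i \overline{H}_i$, where $\overline{H}_i$ denotes the image of $H$ in $\GL(W_i)$ and acts irreducibly on $W_i$ with at most $r$ orbits (orbits on $W_i$ are images of orbits on $V_i\subseteq V$ under an equivariant surjection). Since $K$ is normal in $H$, this gives
$$\ell(H) \leq \ell(H/K) + \ell(K) \leq \max_i \ell(\overline{H}_i) + \log_2 r,$$
reducing the problem to the irreducible case.

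Next I would prove by induction on $r$ that any solvable $H\leq \GL(W)$ acting faithfully and irreducibly on $W = \dbF_p^d$ with at most $r$ orbits satisfies $\ell(H)\leq g(r)$ for some function $g$. The main tool is Clifford theory applied to a minimal normal (elementary abelian) subgroup $N\triangleleft H$, whose order is necessarily coprime to $p$ (otherwise $N$ would fix a nonzero vector, producing an $H$-invariant proper subspace). Decompose $W = W_1\oplus\cdots\oplus W_t$ into $N$-isotypic components transitively permuted by $H$. In the imprimitive case $t\geq 2$, vectors supported in at least two distinct blocks yield $H$-orbits disjoint from $\bigcup_i W_i$, so the block stabilizer $H_0 = \Stab_H(W_1)$ acts on $W_1$ with at most $r-1$ orbits; the wreath embedding $H\hookrightarrow H_0\wr (H/H_0)$, combined with the fact that $H/H_0$ is a transitive solvable permutation group on $t\leq r$ letters (whose derived length is $O(\log r)$ by Dixon's theorem), together with induction on $r$ gives $\ell(H)\leq g(r-1) + O(\log r)$. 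In the primitive case $t=1$, the group $N$ acts on $W$ by scalars in an extension field $\dbF_{p^e}$ of $\dbF_p$ and is therefore cyclic; $H/C_H(N)$ embeds into the cyclic Galois group $\Gal(\dbF_{p^e}/\dbF_p)$, and the analysis recurses on $C_H(N)$ viewed as a solvable linear group over $\dbF_{p^e}$ acting on a space of strictly smaller $\dbF_{p^e}$-dimension $d/e$ (whenever $e>1$).

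The main obstacle will be making the recursion in the primitive subcase terminate with a bound depending only on $r$. The field extension degree $e$ is not directly bounded by $r$, so the correct inductive invariant is the pair $(r, d/e)$ ordered lexicographically: the primitive case strictly decreases the $\dbF_{p^e}$-dimension $d/e$ while keeping $r$ under control, whereas the imprimitive case strictly decreases $r$. Each primitive step contributes only a bounded amount (at most two: one for the cyclic Galois quotient plus the contribution from the recursion) to the derived length. Tracking this bookkeeping carefully to extract an explicit function $f(r)$ is the main technical work.
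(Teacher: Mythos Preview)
Your reduction to the irreducible case is correct and matches the paper's argument essentially line for line: flag of length $m\le r-1$, unipotent kernel $K$ of derived length $\le\log_2 r$, and $\ell(H)\le \max_i\ell(\overline H_i)+\log_2 r$.

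The divergence is in the irreducible case. The paper does not attempt Clifford theory; instead it observes (Lemma~\ref{lem:permutation}) that if $H\le\GL(V)$ is irreducible then the affine group $AH$ is a primitive permutation group of rank $r(H)$, and then invokes Seager's theorem \cite{Se} (Theorem~\ref{thm:Seager}), which says that a solvable primitive permutation group of rank $r$ either has degree bounded by $f_1(r)$ or embeds in $S(p^m)\wr S_k$ with $k\le f_2(r)$. Either way the derived length is bounded in terms of $r$. This is a clean black-box citation.

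Your Clifford-theoretic substitute has a genuine gap in the primitive subcase. When $t=1$ the minimal normal subgroup $N$ is cyclic of prime order $q$ and acts through $\dbF_{p^e}^\times$ where $e=\ord_q(p)$. Two problems: (i) If $q\mid p-1$ then $e=1$, so $N$ consists of $\dbF_p$-scalars, $C_H(N)=H$, and the dimension does not drop; you make no progress and give no alternative. (ii) Even when $e>1$, passing to $C_H(N)$ costs you control of $r$: since $[H:C_H(N)]$ can be as large as $e$, the number of $C_H(N)$-orbits on $W$ is only bounded by $e\cdot r$, and $e$ is not bounded in terms of $r$. Your proposed lexicographic induction on $(r,d/e)$ therefore does not yield a bound depending on $r$ alone: each primitive step adds a bounded amount to the derived length, but the number of such steps is governed by $\log d$, not by $r$. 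This is exactly the difficulty that Seager's theorem encapsulates, and reproving it from scratch is substantially harder than your sketch suggests.
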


We need some preparation before proving Proposition~\ref{prop:solvlinear}.

\begin{Definition}\rm 
Let $P$ be a permutation group acting on a set $X$. 
\begin{itemize}
\item[(i)] Define $r(P)$ to be the number of orbits of $P$ on $X$.
\item[(ii)] The {\it rank of $P$}, denoted $\rk(P)$, is the number of orbits of the diagonal action of $P$ on $X\times X$.
\item[(iii)] The {\it degree of $P$} is the cardinality of $X$.
\end{itemize}
\end{Definition}

The following result is well known, but for completeness we provide a sketch of proof.

\begin{Lemma} 
\label{lem:permutation}
Let $H$ be a subgroup of $\GL(V)$ for some nonzero vector space $V$ over a field $F$, and let $AH$ be the group generated by $H$
and all translations $x\mapsto x+v$ with $v\in V$ (so $AH$ is a subgroup of the affine group $\AGL(V)$).
The following hold:
\begin{itemize}
\item[(a)] $r(H)=\rk(AH)$.
\item[(b)] Assume that either $|F|$ is prime or $H$ contains all (nonzero) scalar operators.
Then $H$ is irreducible as a linear group (that is, $V$ has no non-trivial $H$-invariant subspaces) if and only if
$AH$ is primitive as a permutation group.
\end{itemize}
\end{Lemma}
\begin{proof}[Sketch of proof] (a) holds since $AH$ acts transitively on $V$ and $H$ is a point stabilizer in $AH$ (namely the
stabilizer of $0$).
\skv
(b) If $V$ contains a non-trivial $H$-invariant subspace $W$ then cosets of $W$ form a non-trivial $AH$-invariant partition
of $V$, so $AH$ is not primitive.

Suppose now that $AH$ is not primitive, and let $\Omega$ be a non-trivial $AH$-invariant partition of $V$. Let $W$
be the block of $\Omega$ containing $0$. Then $W$ is $H$-invariant since $H$ fixes $0$.
Since $AH$ contains all maps of the form $x\mapsto x+a$, $a\in V$, it is easy to show that $W$ is a subgroup of $V$ (and hence also a subspace if 
$|F|$ is prime). If $|F|$ is not prime, by assumption $AH$ contains all maps of the form 
$x\mapsto \lam x+a$, $a\in V$, $\lam\in F$ which similarly implies that $W$ is a subspace. Thus $H$ is not irreducible.
\end{proof} 

\begin{proof}[Proof of Proposition~\ref{prop:solvlinear}]
We first consider the case where $H$ is an irreducible subgroup of $\GL_n(\dbF_p)$. In this case Proposition~\ref{prop:solvlinear} easily follows from a theorem of Seager~\cite[Theorem~1]{Se}  whose simplified version is stated below:

\begin{Theorem}[\cite{Se}]
\label{thm:Seager}
Let $P$ be a solvable primitive permutation group of rank $r$ and degree $d$. Then one of the following holds:
\begin{itemize} 
\item[(i)] $d\leq f_1(r)$ for some absolute function $f_1$.
\item[(ii)] There exist a prime $p$ and integers $m$ and $k$ with $k\leq f_2(r)$ for some absolute function $f_2$
such that $P$ embeds into the permutation wreath product $S(p^m)\,{\rm  wr}_{[k]}\, S_k$. Here $[k]=\{1,\ldots, k\}$,
$S_k$ is the symmetric group on $[k]$ and $S(p^m)$ is the group of all maps $\dbF_{p^m}\to \dbF_{p^m}$ of the form $x\mapsto a\sigma(x)+b$
with $a,b\in \dbF_{p^m}$, $a\neq 0$ and $\sigma\in \Aut(\dbF_{p^m})$.
\end{itemize}
\end{Theorem}

Since $H$ is solvable, the group $AH$ (defined as in Lemma~\ref{lem:permutation}) is also solvable. Since $H$ is irreducible,
$AH$ is primitive, so we can apply Theorem~\ref{thm:Seager} to $P=AH$. If (i) holds, then the order of $P$ (and hence also its derived length) is bounded by a function of $r$, so we are done. Suppose now that (ii) holds. If $Q$ is the projection of $P$
to $S_k$, then $P$ embeds into $S(p^m)\,{\rm  wr}_{[k]}\, Q$, and since $P$ is solvable, so is $Q$.  
It is straightforward to check that $S(p^m)$ is solvable of derived length $\leq 3$, whence the derived length of the wreath product 
$S(p^m)\,{\rm wr}_{[k]}\, Q$ (and hence also the derived length of $P$) is bounded by a function of $k$ and
hence also by a function of $r$, as desired. Thus we proved Proposition~\ref{prop:solvlinear} when $H$ is irreducible.
\skv

Now consider the general case. Let $V=\dbF_p^n$, and let $\{0\}=V_0\subset V_1\subset\cdots \subset V_t=V$ be a maximal chain of $H$-invariant subspaces. Note that $t < r=r(H)$ since vectors lying in $V_i\setminus V_{i-1}$ and $V_j\setminus V_{j-1}$ for $i\neq j$ cannot lie
in the same orbit of $H$.

Let $H_i$ be the canonical image of $H$ in $\GL(V_i/V_{i-1})$. Then each $H_i$ is an irreducible solvable linear group with $r(H_i)\leq r$ and hence by Proposition~\ref{prop:solvlinear} in the irreducible case, its derived length $\ell(H_i)$ is bounded above by $f_{\rm{irr}}(r)$ for some 
absolute function $f_{\rm{irr}}$. On the other hand, the kernel $K$ of the natural projection $H\to\prod\limits_{i=1}^t H_i$
is a nilpotent group of class $\leq t-1$ (see the proof below). Hence $\ell(K)\leq \log_2(t-1)$, and therefore $\ell(H)\leq \ell(K)+\ell(\prod H_i)\leq \log_2(t-1)+\max \ell(H_i) <\log_2(r)+f_{\rm{irr}}(r)$, as desired.  
\skv

To prove that $K$ is nilpotent of class $\leq t-1$ notice that $K\subseteq 1+I$ where $I$ is the set of all $f\in {\rm End}(V)$ such that $f(V_j)\subseteq V_{j-1}$ for all $1\leq j\leq t$. Clearly $I$ is a ring (without $1$) and $I^t=0$, whence $1+I$ is a group. By direct computation $[1+I^j,1+I]\subseteq 1+I^{j+1}$ for all $j$. Hence $\gamma_t K\subseteq \gamma_t (1+I)=\{1\}$,
as desired.
\end{proof}

\section{Proof of Theorem~\ref{main:discrete}}

In this section we will prove Theorem~\ref{main:discrete}, assuming the other main results of this paper that will be proved later. 
We will also use the following immediate fact:

\begin{Observation}
\label{obs:basic} Let $G$ be a profinite group containing a dense subgroup $\Gamma$ which has finite NCC (as a discrete group).
Then $G$ has finite NCC. Hence the profinite and pro-$p$ completions of a discrete group with finite NCC
have finite NCC. 
\end{Observation}
\begin{proof} The first claim follows directly from definitions. The second claim follows from the first one and the fact that
finiteness of NCC is inherited by quotients.
\end{proof}

For the rest of the section we fix an infinite residually finite discrete group $G$ with finite NCC. Our goal is to show that $G$ is cyclic or dihedral. Thanks
to the following result, it will be sufficient to show that $G$ is virtually solvable:

\begin{Proposition}
\label{prop:virtsolv}
Let $G$ be an infinite virtually solvable discrete group with finite NCC. Then $G$ is cyclic or dihedral.
\end{Proposition}

Proposition~\ref{prop:virtsolv} is a direct combination of the main result of \cite{GW} which implies that
discrete virtually solvable groups with finite NCC are virtually cyclic and \cite[Proposition~3.8]{vPW2} which asserts that an infinite 
virtually cyclic group with finite NCC is cyclic or dihedral.

The rest of the proof will be divided into three steps, with the first step proving Theorem~\ref{main:discrete} in a special case
and each of the subsequent steps reducing to the previous one. In Steps~1~and~3, we will give a separate argument in the finitely
generated case using more elementary ingredients.
\skv
{\it Step 1: $G$ is residually-$p$ for some prime $p$}. In this case $G$ embeds in its pro-$p$ completion
$\widehat G_{p}$. The group $\widehat G_{p}$ is a pro-$p$ group with finite NCC and therefore $p$-adic analytic
by Theorem~\ref{thm:padic}. In particular, $\widehat G_{p}$ (and hence also $G$) is linear over $\dbQ_p$.

If $G$ is finitely generated, we can deduce that $G$ is cyclic or dihedral directly from the following theorem of 
Puttkamer and Wu~\cite{vPW2}:

\begin{Theorem}
\label{thm:NCClinear}
Let $H$ be an infinite finitely generated discrete linear\footnote{As usual, by a linear group we will mean a group embeddable in $\GL_n(F)$ for some field $F$ and
$n\in\dbN$.} group with finite NCC. Then $H$ is cyclic or dihedral.
\end{Theorem}

If $G$ is not necessarily finitely generated, we argue as follows. Let $\Lambda_p$ be the set of eigenvalues of elements of 
$G$ (with respect to a fixed embedding of $\widehat G_{p}$ into $\GL_n(\dbQ_p)$ for some $n\in\dbN$). Since $G$ has finite NCC, $\Lambda_p$ is a union of finitely many cyclic subgroups of $\overline{\dbQ_p}^{\times}$
(where $\overline{\dbQ_p}$ is the algebraic closure of $\dbQ_p$). In particular, $\Lambda_p$ lies in a finitely generated
subfield of $\overline{\dbQ_p}$. We can now deduce that $G$ is virtually solvable (thereby completing the proof 
of Theorem~\ref{main:discrete} for residually-$p$ groups) using the following theorem of Bernik~\cite{Be}:

\begin{Theorem}
\label{Bernik}
Let $A$ be a linear semigroup in characteristic zero such that the eigenvalues of all elements of $A$ lie in some finitely generated subfield. Then the subgroup generated by $A$ is virtually solvable.
\end{Theorem}
\skv

{\it Step 2: $G$ is residually-$\Nil$}. Then $G$ embeds in its pro-$\Nil$ completion $\widehat G_{\rm{nilp}}$ which is a pro-$\Nil$ group and thus is a direct product of its Sylow pro-$p$ subgroups $\widehat G_{p}$. Note that each $\widehat G_{p}$ 
is the pro-$p$ completion of $G$.

Let $G_p$ denote the image of $G$ in $\widehat G_{p}$. Then $G_p$ has finite NCC (being a quotient of $G$) and is residually-$p$,
so by Step~1 $G_p$ is finite, infinite cyclic or infinite dihedral; moreover, the last case may only occur when $p=2$ 
(since a residually-$p$ group cannot have $q$-torsion for $q\neq p$).
If $G_p$ is finite, it must be a finite $p$-group. If in addition $G_p$ is non-cyclic, its abelianization is also non-cyclic and hence
$\NCC(G_p)\geq p+1$. Since $\NCC(G_p)\leq \NCC(G)$, there are only finitely many $p$ for which $G_p$ is finite non-abelian. It follows that $G_p$ is abelian for almost all $p$ and virtually abelian for all $p$. Since $G$ embeds into $\prod G_p$, it must be virtually abelian,
and we are done.
\skv

{\it Step 3: $G$ is an arbitrary residually finite group}. Since finiteness of NCC is preserved by passing to finite index subgroups, by Step~2 it suffices to show that $G$ has a  finite index residually-$\Nil$ subgroup $H$. If $G$ is finitely generated, this holds by Corollary~\ref{reduction:discretefg}. In the general case, consider the profinite completion $\widehat G$. It has finite NCC and hence by  Theorem~\ref{thm:mainprofinite} has an open pro-$\Nil$ subgroup $U$. Then $H=G\cap U$ is a finite index residually-$\Nil$ subgroup of $G$.

\section{Pro-$p$ groups with finite NCC}

In this section we will prove Theorem~\ref{main:prop}.
Our first goal is to show that pro-$p$ groups with finite NCC are $p$-adic analytic (see Theorem~\ref{thm:padic} below).
We start with the definition of $p$-adic analytic groups and stating several characterizations of compact $p$-adic analytic groups.

\begin{Definition}\rm A topological group $G$ is called {\it $p$-adic analytic} if it can be given the structure
of a manifold over $\dbQ_p$ (compatible with the topology on $G$) such that the multiplication map $(x,y)\mapsto xy$
and the inversion map $x\mapsto x^{-1}$ are analytic.
\end{Definition}

It is quite remarkable that for compact groups, the property of being $p$-adic analytic is equivalent to other natural conditions of very different flavor, some of which are collected in the following theorem. We refer
the reader to \cite{DDMS} for the proof of this theorem and other characterizations of $p$-adic analytic groups.

\begin{Theorem}
\label{thm:padicbasis} Let $G$ be a compact topological group. The following are equivalent:
\begin{itemize}
\item[(a)] $G$ is $p$-adic analytic;
\item[(b)] $G$ is isomorphic to a closed subgroup of $\GL_n(\dbZ_p)$ for some $n\in\dbN$;
\item[(c)] $G$ is virtually pro-$p$ and has finite rank, that is, there exists $d\in\dbN$ such that
$d(H)\leq d$ for every closed subgroup $H$ of $G$. 
\end{itemize}
\end{Theorem}

\begin{Remark}\rm The equivalence of (a) and (b) in Theorem~\ref{thm:padicbasis} immediately implies that closed subgroups of $p$-adic analytic
groups are $p$-adic analytic. It also implies that compact $p$-adic analytic groups are virtually torsion-free. Indeed, it suffices
to prove the latter for $\GL_n(\dbZ_p)$, and an easy direct computation shows that the $k^{\rm th}$ congruence subgroup
$\GL_n^k(\dbZ_p)=\Ker(\GL_n(\dbZ_p)\to \GL_n(\dbZ_p/p^k\dbZ_p)$ is torsion-free if either $p>2$ or $k\geq 2$.
\end{Remark}

In order to prove that pro-$p$ groups with finite NCC are $p$-adic analytic we will use another important characterization,
which deals with the dimension subgroups.

Given a group $G$, let $\{D_n\}_{n=1}^{\infty}$ be the dimension series of $G$ in characteristic $p$. It is defined by $D_n=D_n(G)=\{g\in G: g\equiv 1\mod I^n\}$,
where $I$ is the augmentation ideal of the group algebra $\dbF_p[G]$, and has the following properties:
\begin{itemize}
\item[(a)] $[D_n,D_m]\subseteq D_{n+m}$ for all $n,m\in\dbN$.
\item[(b)] $D_n^p\subseteq D_{np}$ for all $n\in\dbN$.
\item[(c)] $G$ is residually-$p$ if and only if $\bigcap\limits_{n\in N}D_n=\{1\}$.
\end{itemize}
In fact, $\{D_n\}$ is the fastest descending chain of subgroups satisfying (a) and (b), but this will not be important
for our purposes. If $G$ is a finitely generated pro-$p$ group, it is not difficult to show that each $D_n$ is open in $G$ 
(see, e.g. \cite[\S~11]{DDMS}). 

We will use the well-known characterization of $p$-adic analytic pro-$p$ groups in terms of their dimension series (see, e.g. \cite[\S~11]{DDMS}):

\begin{Theorem}
\label{thm:dimension}
Let $G$ be a finitely generated pro-$p$ group $G$. Then $G$ is $p$-adic analytic if and only if $D_n(G)=D_{n+1}(G)$ for some $n\in\dbN$.
\end{Theorem}

We are now ready to prove Theorem~\ref{thm:padic}:

\begin{Theorem}
\label{thm:padic}
Any pro-$p$ group with finite NCC is $p$-adic analytic.
\end{Theorem}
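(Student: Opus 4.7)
The plan is to show that $G$ has finite rank (i.e., there is a uniform bound on the number of topological generators of every closed subgroup); by the standard structure theory of analytic pro-$p$ groups this is equivalent to $p$-adic analyticity. If $G$ is finite there is nothing to prove, so assume $G$ is infinite. By Corollary~\ref{lem:ji}, $G$ is then just-infinite, and by the first paragraph of its proof $G$ is finitely generated.

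The key technical step is the claim that for every closed normal subgroup $N \trianglelefteq G$ one has $d(N) \leq \NCC(G)$, where $d(N)$ denotes the minimal number of topological generators of $N$. Since $N$ is invariant under conjugation by $G$, intersecting an NCC cover of $G$ with $N$ shows $\CC(N,G) \leq \NCC(G)$. The Frattini subgroup $\Phi(N) = \overline{N^p [N,N]}$ is characteristic in $N$ and hence normal in $G$, so Lemma~\ref{CC:subgroup}(ii) yields $\CC(V,G) \leq \NCC(G)$ for $V := N/\Phi(N)$. Now $V$ is elementary abelian of rank $d(N)$, and the image of $G$ in $\GL(V)$ is a finite $p$-group; by the standard orbit-counting argument, any finite $p$-group action on a non-zero $\dbF_p$-vector space has a non-zero fixed vector. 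Iterating this, we construct a complete $G$-invariant flag
\[
0 = V_0 \subsetneq V_1 \subsetneq \cdots \subsetneq V_d = V, \qquad \dim V_i = i,
\]
with $d = d(N)$. For every $1$-dimensional subspace $L \subseteq V$, the level $\min\{i : L \subseteq V_i\}$ is a $G$-orbit invariant (since each $V_i$ is $G$-invariant), so the $G$-orbits on $1$-dimensional subspaces are partitioned by level; since each level is non-empty, there are at least $d$ such orbits. Thus $d(N) = d \leq \CC(V,G) \leq \NCC(G)$, proving the claim.

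We apply the claim to the $p$-lower central series $G = P_1 \supseteq P_2 \supseteq \cdots$ defined by $P_{i+1} := P_i^p [P_i, G]$. Each $P_i$ is characteristic in $G$, and since $G$ is finitely generated, $G/P_i$ is a finitely generated nilpotent $p$-group of exponent dividing $p^{i-1}$ and hence finite; thus $P_i$ is open in $G$. The claim gives $d(P_i) \leq \NCC(G)$ for all $i$, and since $P_{i+1} \supseteq \Phi(P_i) = P_i^p [P_i, P_i]$, the quotient $P_i/P_{i+1}$ is a quotient of the elementary abelian $P_i/\Phi(P_i)$, so
\[
\dim_{\dbF_p}(P_i/P_{i+1}) \leq d(P_i) \leq \NCC(G).
\]
Consequently $|G/P_i|$ grows at most exponentially in $i$, so $G$ has polynomial subgroup growth. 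By the Lubotzky--Mann characterisation of $p$-adic analyticity (see \cite{DDMS}), $G$ is $p$-adic analytic. The main potential obstacle is precisely this final step --- passing from uniformly bounded graded pieces of the $p$-lower central series to $p$-adic analyticity --- but this is a classical theorem available in \cite{DDMS}, so it suffices to quote it; every other ingredient is an elementary consequence of the orbit/flag observation combined with the previously established just-infiniteness and finite generation of $G$.
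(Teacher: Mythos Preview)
Your flag argument establishing $d(N)\le\NCC(G)$ for every open normal subgroup $N$ is correct and is a genuinely different route from the paper's degree argument. However, the final step contains a real gap. From $d(P_i)\le k$ you retain only the weaker consequence $\dim_{\dbF_p}(P_i/P_{i+1})\le k$ and then assert that exponential growth of $|G/P_i|$ in $i$ gives polynomial subgroup growth. This implication is false: pro-$p$ groups of finite width need not be $p$-adic analytic. For instance, in $G=\SL_2^1(\dbF_p[[t]])$ one checks that the lower $p$-series coincides with the congruence filtration (since $\gamma_i(G)=G_i$ by perfectness of $\sl_2$, and $G_i^p\subseteq G_{pi}$), so $\dim(P_i/P_{i+1})=3$ for all $i$ and $|G/P_i|=p^{3(i-1)}$; yet $d(G_i)\ge\dim(G_i/G_{2i})=3i$ is unbounded, so $G$ has infinite rank and therefore does \emph{not} have polynomial subgroup growth. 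The Nottingham group gives a similar counterexample. Thus the passage ``$|G/P_i|$ grows at most exponentially in $i$, so $G$ has polynomial subgroup growth'' is simply wrong.

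The repair is easy and keeps your key claim intact: apply it to the \emph{dimension subgroups} $D_n$ (which are characteristic, hence normal and open) rather than to the $P_i$. Since $\Phi(D_n)=D_n^p[D_n,D_n]\subseteq D_{2n}$, your bound gives $\dim(D_n/D_{2n})\le d(D_n)\le k$, whence $\log_p|G/D_{2^m}|\le km$. If $D_n\ne D_{n+1}$ for all $n$ then $|G/D_n|\ge p^{n-1}$, contradicting this polynomial bound along $n=2^m$; hence some $D_n=D_{n+1}$ and $G$ is $p$-adic analytic by Theorem~\ref{thm:dimension}. The paper's own proof is in the same spirit but more direct: it defines a degree via the $D_n$-filtration and observes that every degree must equal $\deg(x_i^{p^j})\ge p^j\deg(x_i)$ for one of the $k$ generators of the cover, so at most $k(\lfloor\log_p N\rfloor+1)$ degrees $\le N$ can occur, forcing $D_n=D_{n+1}$ without the detour through bounding $d(N)$.
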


\begin{proof} Fix a pro-$p$ group $G$. For any $1\neq x \in G$ define $\deg(x)$ to be the unique integer $n$ such that
$x\in D_n\setminus D_{n+1}$ (such $n$ exists by (c) above). Also set $\deg(1)=\infty$.
The following 3 properties of degree are straightforward:
\begin{itemize}
\item[(i)] Conjugate elements have the same degree (this holds by (a) above with $m=1$).
\item[(ii)] $\deg(x^p)\geq p\deg(x)$ for all $x\in G$ (this holds by (b)).
\item[(iii)] If $\lam\in\dbZ_p^{\times}$ is a unit of $\dbZ_p$, then $\deg(x)=\deg(x^{\lam})$ for all $x\in G$
(since in this case $x$ and $x^{\lam}$ generate the same procyclic subgroup).
\end{itemize}

Let us now assume that $G$ has finite NCC, so there exists a finite subset $\{x_1,\ldots,x_k\}$ of $G$ such that every
element of $G$ is conjugate to $x_i^{\lam}$ for some $1\leq i\leq k$ and $\lam\in\dbZ_p$. Let $d_i=\deg(x_i)$ (without loss of generality we can assume that $x_i\neq 1$, so $d_i<\infty$), and more generally let $d_{i,j}=\deg(x_i^{p^j})$.

Property (iii) above implies that for each $\lam\in\dbZ_p$ we have $\deg(x_i^{\lam})=d_{i,j}$ for some $j$ and hence
by (i) (and the choice of $\{x_1,\ldots,x_k\}$), the degree of any nonzero element of $G$ is equal to $d_{i,j}$
for some $i$ and $j$. 

On the other hand, $d_{i,j}\geq p^j d_i$ by (ii), so for each $N\in\dbN$ there are at most 
$k(\lfloor \log_p(N)\rfloor +1)$ possible degrees of elements of $G$ which are $\leq N$. Since 
$k(\lfloor \log_p(N)\rfloor +1)< N$ for large enough $N$, there exists $n\in\dbN$ which is not the degree of any element
of $G$. But this means precisely that $D_n(G)=D_{n+1}(G)$ and hence $G$ is $p$-adic analytic by Theorem~\ref{thm:dimension}.
\end{proof}

Our next result shows that a compact $p$-adic analytic group with finite NCC must have an element with small centralizer.

\begin{Proposition}
\label{prop:onedim}
Let $G$ be a compact $p$-adic analytic group with finite NCC. Then there exists $g\in G$ whose centralizer $C(g)$ is one-dimensional.
\end{Proposition}

In order to prove Proposition~\ref{prop:onedim}, we need a simple lemma:

\begin{Lemma} 
\label{lem:padicdim}
Let $X$ and $Y$ be $p$-adic manifolds and $\psi:X\to Y$ an analytic map whose
image has non-empty interior. Then $\dim X\geq \dim Y$.
\end{Lemma}
\begin{proof} It is not hard to prove Lemma~\ref{lem:padicdim} directly, but it also follows immediately from
Sard's Lemma for $p$-adic manifolds, as we now explain.

Let $K$ be the set of critical points of $\psi$, that is, the set of all $x\in X$
such that the derivative map $D_x(\psi):T_x(X)\to T_{\psi(x)}Y$ is not surjective (where
$T_x(X)$ and $T_{\psi(x)}Y$ are the tangent spaces). By Sard's Lemma over $\dbQ_p$ \cite[Theorem~2.3.3]{BKL},
$\psi(K)$ has measure zero in $Y$ and thus cannot have non-empty interior. 
Hence $K\neq X$, and for any $x\in X\setminus K$ we have $\dim X=\dim T_x(X)\geq T_{\psi(x)}Y=\dim Y$, as desired.
 \end{proof}

\begin{proof}[Proof of Proposition~\ref{prop:onedim}] By assumption there exist finitely many elements $x_1,\ldots, x_k\in G$ such that
$G=\cup_{i=1}^k \overline{\la x_i\ra}^G$, and assume that $k$ is smallest possible.
As in the proof of Lemma~\ref{lem:NCCrest}, each set $\overline{\la x_i\ra}^G$ is closed and
hence $G\setminus\cup_{j\neq i}\overline{\la x_j\ra}^G$ is open. By the minimality of $k$
the latter set is also non-empty, so $\overline{\la x_i\ra}^G$ has non-empty interior.

We can think of the conjugation map $\phi:(y,g)\mapsto g^{-1}yg$ restricted to $\overline{\la x_i\ra} \times G$ as a map
$\phi_i:\overline{\la x_i\ra} \times G/C(x_i)\to G$. Since $G$ is $p$-adic analytic and $C(x_i)$
is a closed subgroup, the quotient $G/C(x_i)$ is a $p$-adic manifold, and it is straightforward to
check that $\phi_i$ is an analytic map. 

Since $\Im(\phi_i)=\overline{\la x_i\ra}^G$, Lemma~\ref{lem:padicdim} is applicable, so $\dim (\overline{\la x_i\ra} \times G/C(x_i))\geq \dim(G)$. Since
$\dim (\overline{\la x_i\ra} \times G/C(x_i))=\dim (\overline{\la x_i\ra}) +\dim(G)-\dim C(x_i)\leq 1+\dim(G)-\dim C(x_i)$, 
we deduce that $\dim C(x_i)\leq 1$ for each $i$. It remains to show that there exists $i$ such that
$\dim C(x_i)\geq 1$, and the latter is definitely true if $x_i$ is not torsion (since  $C(x_i)$ contains $\overline{\la x_i\ra}$).

Finally, at least one $x_i$ is not torsion since otherwise $G$ is torsion which is impossible since $G$ is infinite and virtually torsion-free.
\end{proof}

To each $p$-adic analytic group $G$ one can associate a $\dbQ_p$-Lie algebra $L(G)$ with $\dim L(G)=\dim(G)$
such that $L(G)$ depends only on the commensurability class of $G$. For a classical definition of $L(G)$
we refer the reader to Serre's book~\cite{Ser2}, but for us it will be more convenient to follow the approach in \cite{DDMS}
(which, in turn, is based on Lazard's manuscript~\cite{La}) and
define $L(G)$ in terms of a certain $\dbZ_p$-Lie subalgebra which can be associated to any {\it uniform} pro-$p$ group.

A pro-$p$ group $G$ is called {\it powerful} if $[G,G]\subseteq \overline{G^{\mathbf p}}$ where 
$\mathbf p$ equals $p$ if $p>2$ and $4$ if $p=2$ and $G^{\mathbf p}$
is the subgroup generated by $\mathbf p^{\rm th}$ powers. A pro-$p$ group
$G$ is {\it uniform} if it is both powerful and torsion-free. Powerful (in particular, uniform) pro-$p$ groups are always $p$-adic analytic. Conversely, every $p$-adic analyic group contains an open uniform subgroup \cite[Corollary~8.34]{DDMS}. 
To any uniform pro-$p$ group $G$ one can associate a $\dbZ_p$-Lie algebra $L_G$ 
(see \cite[\S~6,7]{DDMS} for a proof):

\begin{Proposition} 
\label{prop:logLiealgebra}
Let $G$ be a uniform pro-$p$ group. There exists a structure of a normed $\dbQ_p$-algebra on $\dbQ_p[G]$ with the following properties:
\begin{itemize}
\item[(a)] Let $\widehat{\dbQ_p[G]}$ be the completion of $\dbQ_p[G]$ (with respect to the chosen norm). Then
the function $\log:G\to \widehat{\dbQ_p[G]}$ given by $$\log(g)=\sum_{i=1}^{\infty} \frac{(-1)^{i-1}}{i}(g-1)^i$$
is well-defined (that is, the series converges) and injective. Moreover, $\log$ is a bi-analytic homeomorphism onto its image.
\item[(b)] $L_G=\log(G)$ is a $\dbZ_p$-Lie subalgebra of $\widehat{\dbQ_p[G]}$ (with respect to the commutator bracket).
\end{itemize} 
\end{Proposition}

Given a $p$-adic analytic group $G$, one can now define $L(G)$ as follows: choose any open uniform pro-$p$ subgroup $H$
and set $L(G)=\dbQ_p\otimes_{\dbZ_p}L_H$. In \cite[\S~9]{DDMS} it is shown that $L(G)$ defined in this way is independent
of the choice of $H$ (up to isomorphism) and moreover is isomorphic to the Lie algebra of $G$ as defined in \cite{Ser2}.
\skv
We will need the following basic properties of $L_G$ and the $\log$ map defined above.

\begin{Proposition} 
\label{prop:padicLA}
Let $G$ be a uniform pro-$p$ group and $\exp:L_G\to G$ the inverse of the map $\log:G\to L_G$. The following hold:

\begin{itemize}
\item[(i)] Let $I$ be an ideal of $L_G$ such that $L_G/I$ is torsion-free. Then $\exp(I)$ is a (closed) normal subgroup of $G$.
\item[(ii)] Let $x,y\in G$. Then $x$ and $y$ commute if and only if $[\log(x),\log(y)]=0$. 
 \end{itemize}
\end{Proposition}
\begin{proof} (i) holds by \cite[Proposition~7.15]{DDMS}.

(ii) The Lie algebra $L_G$ admits an alternative definition (see \cite[\S~4]{DDMS} for that definition and \cite[Corollary~7.14]{DDMS} for the proof of its equivalence to the definition of $L_G$ given above). According to this alternative definition and \cite[Lemma~7.12]{DDMS} we have $[\log(x),\log(y)]=\lim\limits_{n\to\infty}\frac{1}{2n}\log([x^{p^n},y^{p^n}])$ for all $x,y\in G$ which immediately implies the forward direction.

To prove the backwards direction, take any $u,v\in L_G$ with $[u,v]=0$. We need to show that $\exp(u)$ and $\exp(v)$ commute.
In \cite[\S~6]{DDMS}, it is proved that $\exp(u)\cdot \exp(v)=\exp(\Phi(u,v))$ where $\Phi(u,v)=\sum\limits_{i=1}^{\infty}f_i(u,v)$,
each $f_i(u,v)$ is a homogeneous Lie polynomial in $u$ and $v$ of degree $i$ and $f_1(u,v)=u+v$.
Since $[u,v]=0$, it follows that $f_i(u,v)=0$ for $i>1$ whence $\exp(u)\cdot \exp(v)=\exp(u+v)=\exp(v+u)=\exp(v)\cdot \exp(u)$,
as desired.
\end{proof}

As an easy consequence of Proposition~\ref{prop:onedim}, Corollary~\ref{lem:ji} and Proposition~\ref{prop:padicLA}, we deduce
that for a pro-$p$ group $G$ with finite NCC, there are very few possibilities for $L(G)$:

\begin{Corollary} 
\label{cor:NCCLA}
Let $G$ be an infinite pro-$p$ group with finite NCC (so that $G$ is $p$-adic analytic
by Theorem~\ref{thm:padic}). Then $L(G)$ is isomorphic to $\dbQ_p$, $\sl_2(\dbQ_p)$ or $\sl_1(D)$
where $D$ is the quaternion division algebra over $\dbQ_p$. 
\end{Corollary}
\begin{proof} After passing to an open subgroup, we can assume that $G$ is uniform.
We claim that $L(G)$ has no nonzero proper $\dbQ_p$-ideals (that is, ideals which are also $\dbQ_p$-subspaces). 
Indeed, suppose that $I$ is a nonzero $\dbQ_p$-ideal of $L(G)$.
Then $L_G\cap I$ is a nonzero ideal of $L_G$ and $L_G/(L_G\cap I)$ is torsion-free (as it embeds in $L(G)/I$),
so by Proposition~\ref{prop:padicLA}, $N=\exp(L_G\cap I)$ is a non-trivial normal subgroup of $G$.
Since $G$ is just-infinite by Corollary~\ref{lem:ji}, $N$ is open in $G$ whence $L_G\cap I$ is open in $L_G$.
Since $L_G/(L_G\cap I)$ is also torsion-free, it must be trivial. Thus $I$ contains $L_G$ and hence $I=L(G)$.

Thus we proved that $L(G)$ is either one-dimensional (and thus isomorphic to $\dbQ_p$) or simple (non-abelian). Let us proceed
with the latter case.
\skv
By Proposition~\ref{prop:onedim}, there exists $g\in G$ with $\dim C_G(g)=1$.
By Proposition~\ref{prop:padicLA}(ii) we have $\log(C_G(g))=C_{L_G}(\log(g))$. Since $\log:G\to L_G$
is bi-analytic, Lemma~\ref{lem:padicdim} implies that $\dim C_{L_G}(\log(g))=\dim C_G(g)=1$. Since
$C_{L(G)}(\log(g))=\dbQ_p C_{L_G}(\log(g))$, it follows that $\dim C_{L(G)}(\log(g))=1$ as well.

Let $r$ denote the rank of $L(G)$. By one of the definitions of the rank, 
$r$ is the minimal value of $\dim C_{L(G)}(x)$ as $x$ ranges over $L(G)$, so we must have $r=1$. Finally, it is well known that  there are only two simple Lie algebras of rank $1$ over $\dbQ_p$: $\sl_2(\dbQ_p)$ and $\sl_1(D)$. 
\end{proof}

We are now ready to prove Theorem~\ref{main:prop} whose statement is recalled below:

\begin{Theoremmainprop}
Let $p$ be a prime and $G$ a pro-$p$ group. Then $G$ has finite NCC if and only
if one of the following 3 mutually exclusive conditions holds:
\begin{itemize}
\item[(i)] $G$ is finite. 
\item[(ii)] $G$ is infinite procyclic or $p=2$ and $G$ is infinite prodihedral, that is, the pro-$2$ completion of the infinite dihedral group.
\item[(iii)] $G$ is isomorphic to an open torsion-free subgroup of $\PGL_1(D)$ where $D$ is the quaternion division algebra over $\dbQ_p$. 
\end{itemize}
\end{Theoremmainprop}

\begin{proof} We start with the `only if' direction. In view of Corollary~\ref{cor:NCCLA}, it suffices to prove the following:
\begin{itemize}
\item[(1)] If $G$ has finite NCC and $L(G)\cong \dbQ_p$, then either $G\cong \dbZ_p$ or $p=2$ and $G$ is infinite prodihedral.
\item[(2)] If $L(G)\cong \sl_2(\dbQ_p)$, then $G$ has infinite NCC.
\item[(3)] If $L(G)\cong \sl_1(D)$, then $G$ is an open subgroup of $\PGL_1(D)$.
\item[(4)] If $G$ is an open pro-$p$ subgroup of $\PGL_1(D)$ with non-trivial torsion, then $G$ has infinite NCC.
\end{itemize}

(1) In this case $G$ must be virtually $\dbZ_p$. Let $Z$ be an open normal subgroup of $G$
isomorphic to $\dbZ_p$, and let $\phi:G\to\Aut(Z)$ be the map induced by conjugation. Since $Z$ is abelian, $\phi$ is not injective. 
Since $G$ has finite NCC, it is just-infinite, so $\Im\phi$ must be finite; in fact, a finite $p$-group. It is clear that $\Aut(\dbZ_p)\cong \dbZ_p^{\times}$, and it is well known  that $\dbZ_p^{\times}\cong\dbZ_p\times \dbZ/(p-1)\dbZ$ for $p>2$ and $\dbZ_2^{\times}\cong\dbZ_2\times \dbZ/2\dbZ$ 
(see, e.g. \cite[Corollary~5.8.2]{Go}). Thus, if $p>2$, then $\Aut(Z)$ has no non-trivial finite $p$-subgroups, so $\phi$ must be trivial, and if $p=2$, then $\Aut(Z)$ has a unique non-trivial finite subgroup which has order $2$. It follows that
$C_G(Z)$, the centralizer of $Z$ in $G$ (which coincides with $\Ker\phi$) equals the entire $G$ if $p>2$ and has index at most $2$ in
$G$ if $p=2$.
\skv

If $C_G(Z)$ contains a (non-trivial) torsion element $g$, then $\la g\ra\times Z$ is an open subgroup of $G$ 
which is not just-infinite and hence has infinite NCC, contrary to Lemma~\ref{lem:finiteindex}. Thus, $C_G(Z)$ is torsion-free.
A well-known theorem of Serre~\cite{Ser} asserts that a finitely generated pro-$p$ group which is virtually free and torsion-free must be free. Thus, $C_G(Z)\cong \dbZ_p$. Recall that $C_G(Z)=G$ if $p>2$, so we are done in the case. If $p=2$, we know that $G$ contains a subgroup
$U$ of index $\leq 2$ isomorphic to $\dbZ_2$. Thus, $G\cong\dbZ_2$ as well or $G$ contains a torsion element (necessarily of order $2$)
which acts on $U$ by inversion, in which case $G$ is infinite prodihedral.  

\skv
(2) Suppose now that $L(G)\cong \sl_2(\dbQ_p)$, so that $G$ contains an open subgroup of $\SL_2(\dbZ_p)$. 
Recall that an element $g$ of $\GL_n(F)$ for some field $F$ is called {\it unipotent} if
all if its eigenvalues are equal to $1$. After replacing $G$ by an open subgroup, we can assume
that 
\begin{itemize}
\item[(*)] all elements of $G$ are either diagonalizable or unipotent. 
\end{itemize}
Indeed, any element of $\SL_2(\dbZ_p)$ which is neither diagonalizable nor unipotent must have eigenvalue $-1$ with multiplicity $2$.
If $p\neq 2$, such an element cannot lie inside any pro-$p$ subgroup, and if $p=2$, such an element lies outside $\SL_2^2(\dbZ_2)$.

Now assume that $G$ has finite NCC, and apply Lemma~\ref{lem:NCCrest} where $A$ is the set of non-trivial unipotent elements in $G$, $B=\{1\}$ and $C=G\setminus (A\sqcup B)$. Condition (*) implies that $C$ is precisely the set
of non-trivial diagonalizable elements whence the hypotheses of Lemma~\ref{lem:NCCrest} are clearly satisfied. Note that $A\neq \{1\}$ as 
$G$ contains non-trivial elements of the form $\begin{pmatrix} 1&\lam \\ 0&1\end{pmatrix}$, so
by Lemma~\ref{lem:NCCrest}, $A\cup\{1\}$ has non-empty interior and hence the same is true for the set of all
unipotent elements in $\SL_2(\dbQ_p)$.  It is easy to see directly that the latter is false (e.g. using the fact that
the unipotent elements in $\SL_2(\dbQ_p)$ are precisely $2\times 2$ matrices with determinant $1$ and trace $2$).

\skv
(3) Since $L(\PGL_1(D))\cong \sl_1(D)$, we can immediately deduce that $G$ is commensurable with $\PGL_1(D)$, but proving that
$G$ is a subgroup of $\PGL_1(D)$ requires more work. We first recall the notion of the commensurator of a profinite group.

\begin{Definition}\rm
Let $P$ be a profinite group. The {\it commensurator of $P$}, denoted $\Comm(P)$, is the group of equivalence classes
of isomorphisms $\phi:U\to V$ where $U$ and $V$ are open subgroups of $P$. Here two isomorphisms $\phi:U\to V$ and $\phi':U'\to V'$
are equivalent if they coincide on an open subgroup of $U\cap U'$.
\end{Definition}

For any profinite group $P$ the conjugation action of $P$ on itself induces a canonical homomorphism $P\to \Comm(P)$. On the other
hand, if $Q$ is another profinite group which is commensurable to $P$ (that is, $Q$ and $P$ have isomorphic open subgroups),
then $\Comm(Q)\cong \Comm(P)$, and thus we obtain a homomorphism $\phi:P\to\Comm(Q)$.
\skv

We proceed with the proof in case (3). Since $G$ is commensurable with $\PGL_1(D)$, as we just explained, there is a natural homomorphism $\phi:G\to \Comm(\PGL_1(D))$. The image of $\phi$ must be infinite (for otherwise, $G$ is virtually abelian, which is clearly a contradiction) and hence by Corollary~\ref{lem:ji}, the kernel of $\phi$ must be trivial, so $G$ embeds into $\Comm(\PGL_1(D))$.
\skv

We claim that $\Comm(\PGL_1(D))\cong \PGL_1(D)$. Indeed, by \cite[Theorem~3.12]{BEW}, if $H$ is any compact $p$-adic analytic group,
then $\Comm(H)$ is isomorphic to $\Aut_{\dbQ_p}(L(H))$, so $\Comm(\PGL_1(D))\cong \Aut_{\dbQ_p}(\sl_1(D))$. 
By \cite[Proposition~8.1]{JT}, $\Aut_{\dbQ_p}(\sl_1(D))$ is isomorphic to $\Aut_{\dbQ_p}(D)$, the group of automorphisms of $D$
considered as an associative $\dbQ_p$-algebra.\footnote{The result in \cite{JT} is stated only for $p=2$, but the proof works for all $p$.} 
Finally, $\Aut_{\dbQ_p}(D)\cong \PGL_1(D)$ by the Skolem-Noether theorem.
\skv

Thus, we proved that $G$ is isomorphic to a (closed) subgroup of $\PGL_1(D)$, and since $G$ is commensurable with $\PGL_1(D)$,
this subgroup must be open (e.g. since $\PGL_1(D)$ is compact $p$-adic analytic, so its closed non-open subgroups have strictly
smaller dimension). 
 
\skv
(4) Finally, suppose that $G$ is an open pro-$p$ subgroup of $\PGL_1(D)$ with non-trivial torsion. Assume that $G$ has finite NCC.
 By Lemma~\ref{cor:proptorsion} the set of torsion elements in $G$ has non-empty interior. 
Let $T$ be set of all torsion elements in $\PGL_1(D)$.
Since $\PGL_1(D)$ is virtually torsion-free, the orders of torsion elements are bounded, so
 there exists $k\in\dbN$ such that $T=\{g\in \PGL_1(D): g^k=1\}$. Let $\rho:D^{\times}\to \PGL_1(D)$ be the natural projection.
 Then $\rho^{-1}(T)$ also has non-empty interior. On the other hand, if we identify $D$ with $\dbQ_p^4$ (by choosing any basis),
then $\rho^{-1}(T)\cup\{0\}$ is a proper Zariski closed subset of $\dbQ_p^4$ and thus must have empty interior, a contradiction.

\skv
This concludes the proof of the `only if' direction of Theorem~\ref{main:prop}. We now prove the `if' direction. It is clear that
the groups in families (i) and (ii) have finite NCC, so we only need to explain why open torsion-free pro-$p$ subgroups of $\PGL_1(D)$
have finite NCC. This fact was essentially known prior to this paper. It may have been indirectly observed by many mathematicians, but the earliest reference in the literature we are aware of is a paper of Jaikin-Zapirain~\cite{Ja}. 

Let us say that a group has {\it finite NAC} if it can be covered by the conjugacy classes of finitely many abelian subgroups or, equivalently, has finitely many conjugacy classes of maximal abelian subgroups.
Similarly to NCC, finiteness of NAC is preserved by passing to open subgroups.
The proof of Theorem~1.3 in \cite{Ja} shows that for any $p$-adic field $F$ and any finite-dimensional central division algebra $D$ over $F$,
the group  $\PGL_1(D)$ has finite NAC. It remains to show that if $F=\dbQ_p$, $\deg(D)=2$ and $G$ is an open torsion-free pro-$p$ subgroup of
$\PGL_1(D)$, then any maximal abelian subgroup of $G$ is procyclic. 

Since maximal abelian subgroups must be closed, it suffices to show
that any closed torsion-free abelian pro-$p$ subgroup of $\PGL_1(D)$ is procyclic. Any such group $A$ is also finitely generated
and hence isomorphic to $\dbZ_p^k$ for some $k\in\dbZ_{\geq 0}$. But then the Lie algebra $L(A)$ is abelian of dimension $k$, and $\sl_1(D)$ has no abelian subalgebras of dimension $>1$. Thus, $k\leq 1$, so $A$ is procyclic, as desired. 
\end{proof}
\skv
\begin{Remark} \rm The central problem investigated in \cite{Ja} is the following: given a pro-$p$ group $G$,
how fast/slow can the number of conjugacy classes of finite quotients $G/N$ grow relative to the size of $G/N$? Finiteness of NAC for the groups of the form $\PGL_1(D)$ was used in \cite{Ja} to show that for every $\eps>0$ there is a finitely generated pro-$p$ group $G$ 
such that the number of conjugacy classes of $G/N$ is at most $|G/N|^{\eps}$ whenever $|G/N|$ is sufficiently large.
\skv
Finiteness of NAC for the groups $\PGL_1(D)$, with $D$ as in Theorem~\ref{main:prop}, was also established
by B\"oge, Jarden and Lubotzky in \cite{BJL} using the same argument as in \cite{Ja}, but in a very different context. 
In the terminology of \cite{BJL}, a profinite group $G$ is called {\it sliceable} if there exist finitely many closed subgroups of infinite index $H_1,\ldots, H_k$ whose conjugacy classes cover $G$. \cite[Theorem~D]{BJL} asserts that the groups of the form $\PGL_1(D)$
are sliceable (but the proof shows they actually have finite NAC). The notion of a sliceable group was introduced in \cite{BJL} in connection with the number-theoretic problem on the existence of Kronecker field towers. It would be interesting to find any number-theoretic questions more directly related to Theorem~\ref{main:prop}.
\end{Remark}

\section{Profinite groups with finite NCC}

In this section we complete the proof of Theorem~\ref{thm:mainprofinite} by establishing the following result.

\begin{Theorem}
\label{thm:step3}
Let $G$ be a pro-$\Sol$ group with finite NCC, and suppose that $G^{(i)}$
is pro-$\Nil$ for some $i$. Then $G$ is virtually pro-$\Nil$.
\end{Theorem}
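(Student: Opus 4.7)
The plan is to argue by induction on $i$. The base case $i=0$ is trivial, as then $G=G^{(0)}$ is itself pronilpotent.

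For the inductive step with $i\geq 1$, I would first apply the inductive hypothesis to the quotient $\overline G := G/G^{(i)}$. By Lemma~\ref{CC:subgroup}(ii) it has finite NCC, and its $(i-1)$-th derived subgroup $G^{(i-1)}/G^{(i)}$ is abelian, hence pronilpotent, so the induction yields an open pronilpotent subgroup of $\overline G$. Lifting gives an open $V\leq G$ with $V/N$ pronilpotent, where $N:=G^{(i)}$; by Lemma~\ref{lem:finiteindex}, $V$ still has finite NCC. Thus it suffices to prove the theorem under the additional assumption that $G$ is itself an extension
\[ 1\longrightarrow N\longrightarrow G\longrightarrow Q\longrightarrow 1 \]
with both $N$ and $Q$ pronilpotent and $G$ of finite NCC.

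The next step is to exploit the Sylow decompositions $N=\prod_p N_p$ and $Q=\prod_p Q_p$, noting that each $N_p$ is characteristic in $N$ and therefore normal in $G$. The key observation is that Lemma~\ref{lem:ji2} applied with $H=N_p$ precludes any pro-$p$ element of infinite order in $Q$ whenever $N_p\neq\{1\}$. Combining this with the classification of pronilpotent groups with finite NCC in Corollary~\ref{main:pronilpotent} (which forces $Q_p$ to have finite NCC as a Sylow subgroup of $Q$ and then to admit the standard structure) shows that $Q_p$ must be finite for every prime $p$ with $N_p$ nontrivial.

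The final step, which I expect to be the main obstacle, is to leverage these prime-by-prime constraints to produce an open pronilpotent subgroup of $G$; concretely, I would aim to find an open $W\leq G$ whose Sylow pro-$p$ and pro-$q$ subgroups commute for all distinct primes $p\neq q$. For each such pair with $N_p\neq\{1\}$, one analyzes the conjugation action $Q_q\to\Aut(N_p)$ and shows it factors through a finite quotient after suitable restriction: passing to the quotient $G/\prod_{r\neq p}N_r$ (still of finite NCC), one lifts pro-$q$ elements via Schur--Zassenhaus and applies Lemma~\ref{lem:ji2} again, while exploiting $p$-adic analyticity of pro-$p$ groups with finite NCC (Theorem~\ref{thm:padic}) to control the pro-$q$ part of $\Aut(N_p)$. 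Assembling these constraints across all primes and intersecting the resulting finite-index subgroups should yield the desired open pronilpotent subgroup of $G$; the technical heart is ensuring that these infinitely many prime-by-prime refinements can be carried out uniformly.
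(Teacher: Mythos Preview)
Your induction on $i$ and the reduction to a pronilpotent-by-pronilpotent extension are sound, and your use of Lemma~\ref{lem:ji2} to force $Q_p$ finite whenever $N_p\neq\{1\}$ is correct. The gap is entirely in the final step, which is where essentially all the difficulty lives. Your appeal to Theorem~\ref{thm:padic} for $N_p$ is not justified as stated: that theorem requires $\NCC(N_p)<\infty$, whereas you only know $\CC(N_p,G)<\infty$ (this is fixable, since the dimension-subgroup argument generalizes, but you have not said so). Much more seriously, even granting that every individual map $Q_q\to\Aut(N_p)$ has finite image, you give no mechanism for assembling these \emph{infinitely many} prime-by-prime constraints into a single open pronilpotent subgroup; ``intersecting the resulting finite-index subgroups'' is not available when infinitely many primes are in play. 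In the paper this assembly problem, even in the strictly easier \emph{metabelian} setting of Proposition~\ref{lem:metabelian}, occupies several pages of delicate element-order arguments (Lemmas~\ref{lem:orders1} and~\ref{lem:orders}, Claims~\ref{claim:Sylow} and~\ref{claim:twosets}, and the three Steps there).

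The paper avoids attacking pronilpotent-by-pronilpotent head-on via an idea your sketch is missing. It reduces further to pronilpotent-by-\emph{abelian} (the case $k\le 1$, obtained from a minimal-$k$ argument together with one application of Proposition~\ref{lem:metabelian} to the metabelian quotient $H/H^{(2)}$), and then invokes P.~Hall's theorem: if $K$ is normal pronilpotent in $U$ and $U/[K,K]$ is nilpotent, then $U$ is pronilpotent. The point is that this reduces the problem to controlling the conjugation action on the \emph{abelianization} of $K$, which is exactly the metabelian situation handled once and for all by Proposition~\ref{lem:metabelian}. Thus the paper isolates the hard work into a single metabelian proposition and uses Hall's theorem as the bridge; your final step is effectively trying to reprove a harder, non-abelian version of that proposition without the bridge.
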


Theorem~\ref{thm:step3} is a fairly easy consequence of the following proposition:

\begin{Proposition}
\label{lem:metabelian}
Let $G$ be a metabelian profinite group with finite NCC, and let $A$ be an abelian closed normal subgroup of $G$ such that
$G/A$ is also abelian. The following hold:
\begin{itemize}
\item[(a)] $G$ has an open abelian subgroup containing $A$;
\item[(b)] $G$ is virtually procyclic.
\end{itemize}
\end{Proposition}

We will first prove Theorem~\ref{thm:step3} 
assuming Proposition~\ref{lem:metabelian} and then prove Proposition~\ref{lem:metabelian}.

\begin{proof}[Proof of Theorem~\ref{thm:step3}]
Let us consider the set of all pairs $(H,k)$ where $H$ is an open subgroup of $G$ and $k\in\dbZ_{\geq 0}$ is such that $H^{(k)}$ is pro-$\Nil$ (by hypotheses this set is non-empty). Among all such pairs $(H,k)$ choose one where $k$ is minimal. 
Theorem~\ref{thm:step3} is equivalent to the assertion that $k=0$.
\vskip .1cm

First we assume that $k\geq 2$ and consider the metabelian group $Q=H/H^{(2)}$. Since $Q$ has finite NCC (as $H$ does), it is virtually procyclic by Proposition~\ref{lem:metabelian}(b). Thus $H$ has an open subgroup $M$ whose image in $Q$ is procyclic and in particular abelian. Then $[M,M]\subseteq H^{(2)}$, whence $M^{(k-1)}= [M,M]^{(k-2)}\subseteq (H^{(2)})^{(k-2)}=H^{(k)}$, and so $M^{(k-1)}$ is pro-$\Nil$. Since $M$ is open in $H$ and hence in $G$,
this contradicts minimality of $k$. Thus we proved that $k\leq 1$.
\vskip .1cm

Since $k\leq 1$, the group $K=[H,H]$ is pro-$\Nil$. We will now use this fact to prove directly that $H$ (and hence $G$) is virtually pro-$\Nil$. By Proposition~\ref{lem:metabelian}(a), $Q=H/H^{(2)}$ has an open abelian subgroup $V$ containing $[H,H]/H^{(2)}$. If $U$ is the preimage of $V$ in $H$, then
$U$ is an open subgroup of $H$ containing $K=[H,H]$ such that $[U,U]\subseteq H^{(2)}$, so in particular $[U,K]\subseteq H^{(2)}=[K,K]$. 
Then $\gamma_3(U)=[U,[U,U]]\subseteq [U,K]\subseteq [K,K]$.

A well-known theorem of P. Hall asserts that if $X$ is a group which has a normal nilpotent subgroup $Y$ such that $X/[Y,Y]$ is nilpotent, then $X$ itself is nilpotent (see, e.g. \cite[5.2.10]{Ro}). It is straightforward to extend this theorem to pro-$\Nil$ groups. We know that
$K$ is pro-$\Nil$, and we just showed that $U/[K,K]$ is nilpotent of class $\leq 2$. Hence by Hall's theorem $U$ is pro-$\Nil$.
Since $U$ is open in $G$, the proof is complete. 
\end{proof}

We now turn to the proof of Proposition~\ref{lem:metabelian}. The proof presented below uses several ideas suggested by the referee and is much shorter and more conceptual than our original proof. 
We will need the following well-known result:

\begin{Lemma}[Proposition~5.5 in \cite{DDMS}]
\label{lem:AutFrattini} Let $G$ be a finitely generated pro-$p$ group, and let $\Aut(G,\Phi(G))$ be the kernel of the natural
map $\Aut(G)\to\Aut(G/\Phi(G))$. Then $\Aut(G,\Phi(G))$ is a pro-$p$ group and therefore $\Aut(G)$ is virtually pro-$p$.
\end{Lemma}

\begin{proof}[Proof of Proposition~\ref{lem:metabelian}]
We start by deducing (b) from (a). Abelian profinite groups with finite NCC are virtually procyclic -- this is not hard to prove directly, but also follows  from
 the classification of pro-$\Nil$ groups with finite NCC (Corollary~\ref{main:pronilpotent}) which is already completed at this stage.
Hence virtually abelian profinite groups with finite NCC are also virtually procyclic, so part (b) of Proposition~\ref{lem:metabelian}
indeed follows from part (a).
 
\skv
(a) First note that $G/A$ is an abelian profinite group with finite NCC and hence is virtually procyclic. Replacing $G$ by an open subgroup
containing $A$, we can assume from now on that
$G/A$ itself is procyclic. Thus, we can write $G=AD$ for some procyclic subgroup $D$.
Since $A$ and $D$ are abelian, they are direct product of Sylow pro-$p$ subgroups: $A=\prod A_p$ and $D=\prod D_p$
where $p$ ranges over all primes.

The conjugation action of $G$ induces maps $\pi:G\to\Aut(A)$ as well as $\pi_p:G\to\Aut(A_p)$ for each prime $p$.
Proposition~\ref{lem:metabelian}(a) asserts exactly that $\pi(G)$ is finite. 
Also  note that $\pi(G)=\pi(D)$ and $\pi_p(G)=\pi_p(D)$. 

We proceed with a few more auxiliary results.

\begin{Lemma}
\label{lem:dpfinite}
The minimal number of generators $d(A_p)$ is finite for all $p$.
\end{Lemma}
\begin{proof}
Fix a prime $p$, and let $\{G_i\}_{i=1}^{\infty}$ be any descending chain of open normal subgroups of $G$ which form a base of neighborhoods of identity. Let $V_p=A_p/A_p^p$, and let $\{V_{p,i}\}_{i\in\dbN}$ be the filtration of $V_p$ induced by $\{G_i\}$.
Since $G_i$ are open and normal in $G$, the subspaces $V_{p,i}$ are $G$-invariant and have finite codimension in $V_p$. 

Suppose now that $d_p=\infty$. Then $V_p$ is infinite and thus we can find an infinite sequence $i_1<i_2<\cdots$ such that the subspaces $V_{p,i_k}$ are all distinct. Choose
$v_k\in V_{p,i_k}\setminus V_{p,i_{k+1}}$. Then the subspaces $\dbF_p v_i$ and $\dbF_p v_j$ cannot be in the same $G$-orbit for $i\neq j$, so $G$ acts on the set of $1$-dimensional subspaces of $V_p$ with infinitely many orbits. On the other hand, the number of such orbits is exactly $\CC(V_p,G)$, and by
Lemma~\ref{CC:subgroup}(i)(ii) $\CC(V_p,G)\leq \NCC(G)$. Since $\NCC(G)<\infty$, we reached a contradiction. 
\end{proof}

\begin{Lemma}
\label{lem:pDp}
If $D_p$ is infinite, then $A_p$ is trivial.
\end{Lemma}
\begin{proof} Suppose this is false for some $p$. Then there exists an epimorphism $\rho_p:G\to \dbZ_p$ with 
$|\Ker\rho_p|$ (considered as a supernatural number) divisible by $p$. Since $G$ has finite NCC, this contradicts
Lemma~\ref{lem:ji2}.
\end{proof}

\begin{Lemma}
\label{lem:Vpfinite}
For each $p$ the group $\pi_p(D)$ is finite.
\end{Lemma}
\begin{proof} Let $D_p'=\prod_{q\neq p} D_q$, so that $D=D_p\times D_p'$. We already know that $\pi_p(D_p)$ is finite by 
Lemma~\ref{lem:pDp}, so it suffices to show that $\pi_p(D'_p)$ is finite.

Since $d(A_p)$ is finite by Lemma~\ref{lem:dpfinite}, the group $\Aut(A_p)$ is virtually pro-$p$ by Lemma~\ref{lem:AutFrattini}.
On the other hand, $|D_p'|$ is coprime to $p$, so its image $\pi_p(D'_p)$ has trivial intersection 
with any pro-$p$ subgroup of $\Aut(A_p)$. Hence $\pi_p(D'_p)$ has trivial intersection with an open subgroup
of $\Aut(A_p)$ and is therefore finite, as desired.
\end{proof}

Recall that for a group $\Phi$ acting by automorphisms on a profinite group $H$ we denote
by $CC(H,\Phi)$ the smallest number of procyclic subgroups of $H$ whose $\Phi$-orbits cover $H$. Equivalently,
$CC(H,\Phi)$ is the number of $\Phi$-orbits of maximal procyclic subgroups of $H$.

\begin{Lemma}
\label{lem:CC1}
There are only finitely many $p$ for which $CC(A_p,D)>1$.
\end{Lemma}
\begin{proof}
Supoose that $CC(A_p,D)>1$ for infinitely many $p$. Since $CC(A_p,D)=CC(A_p,G)=CC(A_p,\pi_p(G))$, Lemma~\ref{lem:product}
implies that $CC(A,\prod\limits_p \pi_p(G))=\infty$. Since $\pi(G)$ is a subgroup of $\prod\limits_p \pi_p(G)$,
we have $CC(A,G)=CC(A,\pi(G))\geq CC(A,\prod\limits_p \pi_p(G))$, and finally $NCC(G)\geq CC(A,G)$ by
Lemma~\ref{CC:subgroup}(i). Thus, $NCC(G)=\infty$, a contradiction.
\end{proof}

Since $\pi_p(G)$ is finite for each $p$, we can replace $G$ by $G/\prod\limits_{p\in F} A_p$ for any finite set of primes $F$
without affecting whether $\pi(G)$ is finite or not. In view of Lemma~\ref{lem:CC1}, after doing so we can assume
the following:

\begin{itemize}
\item[(*)] For each prime $p$ we have $CC(A_p,D)=1$, that is, any two maximal procyclic subgroups
of $A_p$ are conjugate by an element of $D$.
\end{itemize}

We proceed with the proof of Proposition~\ref{lem:metabelian}, now assuming (*). For each prime $p$
choose a maximal procyclic subgroup $C_p$ of $A_p$. Note that $C_p$ and $A_p$ have the same centralizer in $D$.
This follows from (*) and the fact that $D$ is abelian which implies that all $D$-conjugates of $C_p$
have the same centralizer.

\skv
Now let $P$ be the set of all primes $p$ such that
$A_p$ is non-trivial. If $P$ is finite, $\pi(G)$ is finite by Lemma~\ref{lem:Vpfinite}, so assume that
$P$ is infinite and enumerate its elements arbitrarily: $P=\{p_1,p_2,\ldots\}$.
For each $n\in\dbN$ let $P_n=\{p_1,p_2,\ldots, p_n\}$. Let
$C(n)=\prod\limits_{i=1}^n C_{p_i}$, let $B(n)$ be the centralizer of $C(n)$ in $D$ and 
$D(n)=\prod\limits_{p\not\in P_n}D_p\cap B(n)$ (note that the product is over all primes lying outside of $P_n$,
not just the ones in $P\setminus P_n$). The groups $\prod\limits_{p\not\in P_n}D_p$ and $B(n)$
are both open in $D$. The former holds since 
for any $p\in P_n$ the group $A_p$ is non-trivial and hence $D_p$ is finite by Lemma~\ref{lem:pDp}.
And $B(n)$ is open in $D$ by Lemma~\ref{lem:Vpfinite}. Hence $D(n)$ is also open in $D$.
\skv
Let $E(n)=C(n) D(n)$. By construction $C(n)$ and $D(n)$ are both procyclic and have coprime orders;
moreover $D(n)$ centralizes $C(n)$, so $E(n)$ is procyclic for each $n$. Since $G$ has finite NCC, it has a procyclic subgroup $L$
which contains some conjugate of $E(n)$ for arbitrarily large $n$. In particular, $L$ must contain
some conjugate $C'_{p_i}$ of $C_{p_i}$ for each $i$. On the other hand, $L$ contains a conjugate
of $D(m)$ for some $m$, and replacing $L$ by a conjugate, we can assume that $L$ contains $D(m)$.

We already observed that $C_{p_i}$ (and hence
also $C'_{p_i}$) has the same centralizer in $D$ as $A_{p_i}$. Hence the centralizer of $L$
in $D$ is contained in the centralizer of $A=\prod\limits_{i=1}^n A_{p_i}$. On the other hand, $L$ contains
$D(m)$ (which commutes with $D$ as $D$ is abelian), so $D(m)$ centralizes $A$. Thus $AD(m)$ is an abelian group containing $A$,
and it is open in $G$ since $D(m)$ is open in $D$. 
\end{proof}

\section{Connections with topology}
\label{sec:BVC}
The following terminology was introduced in \cite{vPW2}:
\begin{Definition}\rm A discrete group $G$ has {\it property (bCyc)} if $G$ has finite NCC.  
\end{Definition}
In this section we will introduce two variations of property (bCyc) denoted (bVC) and (BVC)
and discuss how they are related to (bCyc) and to each other. We will then explain how properties (bCyc)
and (BVC) naturally arise in the study of certain classifying spaces for families of subgroups. 

We start with a very general definition.

\begin{Definition}\rm  Let $G$ be a group and let $\mathcal F$ be a class of groups closed under isomorphisms and subgroups.
We will say that
\begin{itemize}
\item[(i)] $G$ has {\it property (b$\mathcal F$)} if there exist finitely many subgroups of $G$ which lie in $\calF$ and whose conjugacy classes cover $G$;
\item[(ii)] $G$ has {\it property (B$\mathcal F$)} if there exist finitely many subgroups $H_1,\ldots, H_k$ of $G$ which lie in $\calF$ and such that
every subgroup of $G$ lying in $\mathcal F$ is conjugate to a subgroup of $H_i$ for some $i$.
\end{itemize}
\end{Definition}
Below we will denote the classes of cyclic and virtually cyclic groups by $Cyc$ and $VC$, respectively. Clearly, properties
$(BCyc)$ and $(bCyc)$ are equivalent to each other and hold if and only if the group has finite NCC. The notation (BVC) was introduced in \cite{GW}, while properties (bCyc) and (bVC) were formally introduced in \cite{vPW2} (the notation for (bVC) in \cite{vPW} is (bVCyc)). 

\skv
 The following observation is immediate from definitions. 

\begin{Observation} 
\label{obs:BVC}
The following hold:
\begin{itemize}
\item[(a)] If $\mathcal F$ contains all cyclic groups, then (B$\mathcal F$) implies (b$\mathcal F$).
\item[(b)] If $\mathcal F_1\subseteq \mathcal F_2$, then (b$\mathcal F_1$) implies (b$\mathcal F_2$).
\item[(c)] If $\mathcal F$ is closed under quotients, then any quotient of a group with (b$\calF$) has (b$\calF$).
\end{itemize}
\end{Observation}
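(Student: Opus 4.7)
The plan is to verify each of the three clauses directly from the definitions; none of them require any machinery beyond unwinding what (b$\mathcal F$) and (B$\mathcal F$) say.

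For (a), I would take any $g\in G$ and form the cyclic subgroup $\la g\ra$ (or, in the profinite setting, the procyclic subgroup $\overline{\la g\ra}$). By the standing hypothesis that $\mathcal F$ contains all cyclic (resp.\ procyclic) groups, this subgroup lies in $\mathcal F$. Applying (B$\mathcal F$) to the finite family $H_1,\ldots,H_k\in\mathcal F$ furnished by the hypothesis, some conjugate of $\la g\ra$ is contained in one of the $H_i$, so $g$ itself lies in a conjugate of $H_i$. Thus the same family $\{H_i\}$ witnesses (b$\mathcal F$).

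For (b), there is literally nothing to do beyond observing that a family $H_1,\ldots,H_k\in\mathcal F_1$ whose conjugacy classes cover $G$ is, because $\mathcal F_1\subseteq\mathcal F_2$, a family in $\mathcal F_2$ with the same covering property; so the witnesses for (b$\mathcal F_1$) serve verbatim as witnesses for (b$\mathcal F_2$).

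For (c), I would push the witnessing subgroups forward. Let $\pi:G\twoheadrightarrow Q$ be a (continuous, in the profinite case) surjection and let $H_1,\ldots,H_k\in\mathcal F$ be a family covering $G$ up to conjugacy. Each $\pi(H_i)$ is a quotient of $H_i$ and so lies in $\mathcal F$ by hypothesis. The identity $\pi(gH_i g^{-1})=\pi(g)\pi(H_i)\pi(g)^{-1}$ combined with surjectivity of $\pi$ shows that the $Q$-conjugates of the $\pi(H_i)$ cover $Q$. No step here presents any genuine obstacle — the observation is essentially tautological, which is presumably why the authors label it as such. The only minor subtlety worth flagging is that in the profinite version of (a) one must work with the \emph{closed} procyclic subgroup $\overline{\la g\ra}$ rather than the abstract cyclic group, to stay inside the category of closed subgroups where (B$\mathcal F$) is stated.
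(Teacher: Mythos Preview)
Your proposal is correct and matches the paper's treatment: the authors do not give a proof at all, simply declaring the observation ``immediate from definitions,'' and your argument is precisely the routine unwinding that justifies this. There is nothing to add or compare.
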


Thus either of the properties (BVC) and (BCyc)=(bCyc) implies (bVC). Unlike (bVC), property (BVC) is not inherited by quotients, and (bCyc) does not imply (BVC) (see Corollary~4.22 and Example~1.12 in \cite{vPW2}). There are plenty of groups which have
(BVC), but not (bCyc), e.g. any virtually cyclic group which is not finite, cyclic or infinite dihedral. However, discrete torsion-free groups with (BVC) have (bCyc) since a torsion-free virtually cyclic group must be cyclic.
The latter holds, for instance, since any infinite virtually cyclic group $V$ has a unique maximal finite normal
subgroup $N$ such that $V/N$ is infinite cyclic or infinite dihedral \cite[Lemma~4.1]{Wa}.
Further, residually finite groups with (BVC) are not far from having (bCyc):

\begin{Lemma}[see Lemma~5.0.2 in \cite{vP}] 
\label{lem:BVCfiniteindex}
Let $G$ be a discrete residually finite group with (BVC). Then some finite index subgroup $H$ of $G$ has (bCyc).
\end{Lemma}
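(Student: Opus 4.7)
The plan is to produce the required finite-index (resp.\ open) normal subgroup $H$ of $G$ so that each intersection $H\cap V_i$ is cyclic (resp.\ procyclic), where $V_1,\ldots,V_k$ are the virtually (pro)cyclic subgroups witnessing (BVC). Once this has been arranged, (bCyc) for $H$ follows quickly: for $x\in H$, the (pro)cyclic group $\overline{\la x\ra}$ is virtually (pro)cyclic, hence $G$-conjugate into some $V_i$, so $g^{-1}xg\in V_i$ for some $g\in G$; normality of $H$ forces $g^{-1}xg\in H\cap V_i$. Thus the $G$-conjugates of the $H\cap V_i$ cover $H$, and since $[G:H]<\infty$, each $G$-conjugacy class of subgroups of $H$ breaks into at most $[G:H]$ $H$-conjugacy classes, yielding finitely many (pro)cyclic subgroups of $H$ whose $H$-conjugates cover $H$.

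For the profinite case the trimming is direct. Each $V_i$ contains an open procyclic subgroup $C_i$; since $C_i$ is open in $V_i$ in the induced topology and the open subgroups of $G$ form a base of neighborhoods of $1$, there is an open subgroup $U_i\leq G$ with $U_i\cap V_i\subseteq C_i$. Taking $H$ to be the normal core in $G$ of $\bigcap_iU_i$ gives an open normal subgroup with $H\cap V_i\subseteq C_i$ procyclic for every $i$, as required.

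In the discrete case such a separation of $C_i$ inside $V_i$ is not available from residual finiteness alone, so I would instead arrange $H$ to be torsion-free: then each $H\cap V_i$ is a torsion-free finite-index subgroup of the virtually cyclic $V_i$, which must be $\{1\}$ or $\dbZ$ (the only torsion-free virtually cyclic discrete groups), hence cyclic. To construct such an $H$, I would rely on the structural fact that every discrete virtually cyclic group $V$ has only finitely many conjugacy classes of torsion elements. Letting $T(V)$ denote the unique maximal finite normal subgroup of $V$, the quotient $V/T(V)$ is $\{1\}$, $\dbZ$, or $D_\infty$. In the first two cases the torsion of $V$ equals the finite set $T(V)$, so the claim is trivial. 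In the case $V/T(V)\cong D_\infty$, the elements outside the index-two subgroup $V^+$ (the preimage of the $\dbZ$ inside $D_\infty$) are all torsion, because their squares map to $0\in\dbZ$ and thus lie in $T(V)$; one checks that conjugation by $V^+$ acts on this coset modulo $T(V)$ through the $-1$-action of $V^+/T(V)\cong\dbZ$ on itself, producing only two orbits, and only finitely many $V$-conjugacy classes arise in total. Applying (BVC) to $\la t\ra$ for each torsion $t\in G$ shows that $t$ is $G$-conjugate to a torsion element of some $V_i$, so $G$ has only finitely many $G$-conjugacy classes of non-trivial torsion elements. Residual finiteness then produces a finite-index normal $H\leq G$ avoiding a set of representatives of these classes; normality upgrades this to avoidance of the entire classes, so $H$ is torsion-free.

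The main technical point is the conjugacy-class count in the dihedral case $V/T(V)\cong D_\infty$: the reflection coset of $V$ is infinite, so one must genuinely exploit the semidirect decomposition of $V^+$ and the inverting action of the complement to see that only finitely many $V$-conjugacy classes of torsion elements appear. The remaining ingredients — constructing the $U_i$ in the profinite case, the residual-finiteness maneuver in the discrete case, and the passage from $G$-conjugacy to $H$-conjugacy — are routine.
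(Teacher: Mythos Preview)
Your proposal is correct and follows essentially the same approach as the paper: in the profinite case you both pass to an open normal subgroup so that $H\cap V_i$ is procyclic, and in the discrete case you both use the finiteness of conjugacy classes of torsion elements in virtually cyclic groups (the paper cites \cite[Proposition~4]{JPL}, you sketch it directly) together with residual finiteness to produce a torsion-free finite-index normal $H$. The only cosmetic difference is that the paper finishes the discrete case by invoking the equivalence of (bVC), (BVC) and (bCyc) for torsion-free groups, whereas you argue concretely that each $H\cap V_i$ is cyclic and then pass from $G$-conjugacy to $H$-conjugacy.
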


Property (B$\mathcal F$) naturally arises in the study of the classifying space $\calE_{\calF}(G)$ defined as follows:

\begin{Definition}\rm Let $G$ be a discrete group and let $\calF$ be as above. A classifying space $\calE_{\calF}(G)$ is a $G$-$CW$ complex (that is, a $CW$ complex with a cellular action of $G$) such that for every subgroup $H$ of $G$, the $H$-fixed point space $\calE_{\calF}(G)^H$ is empty if $H\not\in\calF$ and
contractible (in particular, non-empty) if $H\in \calF$.
\end{Definition}

It is known that $\calE_{\calF}(G)$ is unique up to $G$-homotopy. 
\skv

A $G$-$CW$ complex is said to be {\it finite type} if it has finitely many $G$-orbits of cells in each dimension and {\it finite} if it
is of finite type and finite-dimensional. Juan-Pineda and Leary~\cite[Conjecture~1]{JPL} conjectured that a classifying space $\calE_{VC}(G)$
cannot be finite unless $G$ is virtually cyclic. A similar question of L\"uck, Reich, Rognes and Varisco~\cite[Question~4.9]{LRRV} asks
whether $\calE_{Cyc}(G)$ cannot be of finite type unless $G$ is finite, cyclic or dihedral. 

The following result establishes the basic relation between property (B$\calF$) for $G$ and the classifying space
$\calE_{\calF}(G)$:
\begin{Claim}
\label{claim:topconnection}
$G$ admits $\calE_{\calF}(G)$ with finitely many $0$-cells if and only if $G$ has (B$\calF$). 
\end{Claim}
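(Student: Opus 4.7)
The plan is to prove the two directions separately. First I would point out that ``finitely many $0$-cells'' should be interpreted as finitely many $G$-orbits of $0$-cells, since the $G$-orbit of a single $0$-cell is itself infinite whenever its stabilizer has infinite index in $G$.

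For the forward direction, suppose $X=\calE_{\calF}(G)$ is a model whose $0$-cell orbits have representatives $v_1,\ldots,v_k$, and set $H_i=G_{v_i}$. Two observations suffice. First, every cell stabilizer of $X$ must lie in $\calF$: if $K\le G$ fixes a cell pointwise then $X^K\neq\emptyset$, which by the defining property of $\calE_{\calF}(G)$ forces $K\in\calF$; in particular each $H_i\in\calF$. Second, for any $H\in\calF$ the fixed-point set $X^H$ is contractible (hence non-empty) and is a subcomplex of $X$ (standard fact about $G$-$CW$ complexes), so it contains some $0$-cell $v=g\cdot v_i$. Then $H\subseteq G_v=gH_ig\iv$, i.e.\ $g\iv Hg\subseteq H_i$, which is exactly (B$\calF$).

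For the reverse direction, given witnesses $H_1,\ldots,H_k\in\calF$ of (B$\calF$), I would construct $X=\calE_{\calF}(G)$ by starting with the discrete $G$-$CW$ complex $X^{(0)}=\bigsqcup_{i=1}^k G/H_i$, which has exactly $k$ orbits of $0$-cells and all of whose point stabilizers (conjugates of the $H_i$) lie in $\calF$, and then inductively attaching equivariant cells of the form $G/K\times D^{n+1}$ with $K\in\calF$ in dimensions $\ge 1$ so as to kill $\pi_n((X^{(n)})^H)$ for every $H\in\calF$. The initial $0$-skeleton already satisfies $(X^{(0)})^H\neq\emptyset$ for every $H\in\calF$: by (B$\calF$) some $g\iv Hg\subseteq H_i$, so the coset $gH_i\in G/H_i\subseteq X^{(0)}$ is $H$-fixed. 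Since only cells of dimension $\ge 1$ are attached, the $0$-skeleton is preserved, and since all attached cells have stabilizer in $\calF$, the resulting $X$ satisfies $X^H=\emptyset$ for $H\notin\calF$.

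The main obstacle is verifying in this reverse direction that the inductive attachment can be arranged so that \emph{every} non-empty fixed-point set $X^H$ (with $H\in\calF$) becomes contractible simultaneously. This is handled by equivariant obstruction theory and the equivariant Whitehead theorem, hinging on the key computation
$$(G/K\times D^{n+1})^H=(G/K)^H\times D^{n+1},$$
which shows that each attached equivariant $(n+1)$-cell contributes a disjoint union of ordinary $(n+1)$-discs to each fixed-point subcomplex. This lets one kill, at stage $n$, a chosen set of generators of $\pi_n((X^{(n)})^H)$ for all conjugacy classes of $H\in\calF$ appearing as stabilizers up to that stage, uniformly across $H$. The colimit $X=\bigcup_n X^{(n)}$ is then the desired model of $\calE_{\calF}(G)$ with $k$ orbits of $0$-cells.
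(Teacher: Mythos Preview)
Your argument is correct and follows the standard route; the forward direction is exactly right, and your sketch of the reverse direction via the usual inductive cell-attachment construction of a model for $\calE_{\calF}(G)$ is the expected one. The paper itself does not give a proof at all: it simply observes that the case $\calF=VC$ is \cite[Lemma~1.3]{vPW} and that the same proof works for arbitrary $\calF$, so you have supplied precisely the details the paper chose to outsource.
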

Claim~\ref{claim:topconnection} in the case $\calF=VC$ is Lemma~1.3 in \cite{vPW}. The proof in the general case is identical.

\begin{Corollary} 
\label{cor:top}
Let $G$ be a residually finite group. Then
\begin{itemize}
\item[(a)] \cite[Conjecture~1]{JPL} holds for $G$ and 
\item[(b)] \cite[Question~4.9]{LRRV} has positive answer for $G$.
\end{itemize}
\end{Corollary}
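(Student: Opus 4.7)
The plan is to extract the corollary directly from the combination of Claim~\ref{claim:topconnection}, Lemma~\ref{lem:BVCfiniteindex}, and Theorem~\ref{main:discrete}, since the bulk of the work is already done by these results. The topological hypotheses in \cite[Conjecture~1]{JPL} and \cite[Question~4.9]{LRRV} will be translated into the group-theoretic properties (BVC) and (bCyc), respectively, and then the classification of residually finite discrete groups with these properties will finish the argument.

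For part (a), assume that $G$ admits a finite classifying space $\calE_{VC}(G)$. In particular this space has finitely many $0$-cells, so by Claim~\ref{claim:topconnection} applied to $\calF = VC$, the group $G$ has property (BVC). Since $G$ is residually finite, Lemma~\ref{lem:BVCfiniteindex} produces a finite index subgroup $H \leq G$ with property (bCyc), i.e. finite NCC. If $H$ is finite, then $G$ is finite and hence virtually cyclic. If $H$ is infinite, then $H$ is residually finite (being a finite index subgroup of a residually finite group) and has finite NCC, so Theorem~\ref{main:discrete} forces $H$ to be infinite cyclic or infinite dihedral; in either case $G$ is virtually cyclic.

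For part (b), assume that $G$ admits a classifying space $\calE_{Cyc}(G)$ of finite type. Then again it has finitely many $0$-cells, so Claim~\ref{claim:topconnection} applied to $\calF=Cyc$ shows that $G$ has (BCyc), which coincides with (bCyc), i.e. $\NCC(G) < \infty$. If $G$ is finite there is nothing more to prove. If $G$ is infinite, then since $G$ is residually finite, Theorem~\ref{main:discrete} implies that $G$ is infinite cyclic or infinite dihedral, confirming the list in \cite[Question~4.9]{LRRV}.

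There is no genuine obstacle here: the corollary is essentially a translation step. The only care needed is to verify that the topological hypotheses (finite, resp. finite type) imply the ``finitely many $0$-cells'' condition required by Claim~\ref{claim:topconnection}, which is immediate from the respective definitions, and to apply Lemma~\ref{lem:BVCfiniteindex} in part (a) to bridge the gap between (BVC) and (bCyc) before invoking Theorem~\ref{main:discrete}.
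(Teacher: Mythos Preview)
Your proof is correct and follows essentially the same route as the paper's: translate the topological hypotheses into (BVC) and (bCyc) via Claim~\ref{claim:topconnection}, then invoke Theorem~\ref{main:discrete}, using Lemma~\ref{lem:BVCfiniteindex} as the bridge from (BVC) to (bCyc) in part~(a). The paper's own proof is more terse but uses exactly these three ingredients in the same way.
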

\begin{proof} Suppose that $\calE_{Cyc}(G)$ has finite type. Then by Claim~\ref{claim:topconnection} $G$ has (BCyc)=(bCyc),
so (b) follows directly from Theorem~\ref{main:discrete}. To prove (a) we use the same argument in conjunction with Lemma~\ref{lem:BVCfiniteindex}.
\end{proof}

\end{document}